\documentclass[10pt]{article}
\usepackage[a4paper]{geometry}

\usepackage[T1]{fontenc}
\usepackage[utf8]{inputenc}
\usepackage[british]{babel}
\usepackage[dvipsnames,svgnames,table]{xcolor}
\usepackage{lmodern}
\usepackage{csquotes}
\usepackage{amsmath,amsthm,amsfonts,amssymb,mathtools}
\usepackage{enumitem}
\usepackage{thmtools}
\usepackage{commath}
\usepackage{nicefrac}
\usepackage{stmaryrd}
\usepackage{etoolbox}
\usepackage{booktabs}
\usepackage{multirow}
\usepackage{hyperref}
\usepackage[capitalize]{cleveref}
\usepackage{graphicx}
\usepackage{subcaption}
\usepackage{todonotes}
\usepackage{tabularray}
\usepackage{tikz}
\usepackage{cite}

\usepackage{tikz-3dplot}
\usetikzlibrary{positioning, shapes, arrows, calc, arrows.meta, patterns, shadows, trees, intersections}

\DeclareMathOperator*{\argmin}{arg\,min}

\newtheorem{theorem}{Theorem}[section] %
\newtheorem{lemma}[theorem]{Lemma}

\newtheorem{remark}[theorem]{Remark}

\newtheorem{corollary}[theorem]{Corollary}
\newtheorem{assumption}[theorem]{Assumption}

\newcommand{\diam}{\mathrm{diam}}

\newcommand{\Th}{{\mathcal{T}_h}}
\newcommand{\elm}{T}                %
\newcommand{\Fh}{{\mathcal{F}_h}}
\newcommand{\Fhi}{{\mathcal{F}_h^{\text{i}}}}
\newcommand{\Fhb}{{\mathcal{F}_h^{\text{b}}}}
\newcommand{\BDM}{\mathbb{B}\mathbb{D}\mathbb{M}}

\newcommand{\dom}{{\Omega}}

\newenvironment{eqs} %
 { \begin{equation} \begin{aligned} } %
 { \end{aligned} \end{equation} \ignorespacesafterend } %

\renewcommand{\norm}[1]{\lVert #1 \rVert}
\newcommand{\dgnorm}[1]{\norm{#1}_{\IX}}
\newcommand{\dgsnorm}[1]{\norm{#1}_{\IX,\ast}}
\newcommand{\vdgnorm}[1]{\norm{#1}_{1,h}}
\newcommand{\vdgsnorm}[1]{\norm{#1}_{1,h,\ast}}

\newcommand{\stablocal}[1]{\alpha_{\scalebox{0.6}{$K_{#1}$}}}
\newcommand{\contlocal}[1]{\beta_{\scalebox{0.6}{$K_{#1}$}}}
\newcommand{\staboseen}[1]{\alpha_{\scalebox{0.6}{$k_{#1}$}}}
\newcommand{\contoseen}[1]{\beta_{\scalebox{0.6}{$k_{#1}$}}}
\newcommand{\staboseenr}[1]{\alpha_{\scalebox{0.6}{$\underline{k}_{#1}$}}}
\newcommand{\contoseenr}[1]{\beta_{\scalebox{0.6}{$\underline{k}_{#1}$}}}
\newcommand{\contcross}[1]{\beta_{\scalebox{0.6}{$\times_{#1}$}}}

\newcommand{\conttrefftz}[1]{\beta_{\scalebox{0.6}{$c_{#1}$}}}
\newcommand{\stabtrefftz}[1]{\alpha_{\scalebox{0.6}{$c_{#1}$}}}
\newcommand{\stabtrefftzr}[1]{\alpha_{\scalebox{0.6}{$\underline{c}_{#1}$}}}
\newcommand{\conttrefftzr}[1]{\beta_{\scalebox{0.6}{$\underline{c}_{#1}$}}}

\newcommand{\contconv}{\beta_{\scalebox{0.6}{${a}^{\texttt{c}}$}}}   %
\newcommand{\contvis}{\beta_{\scalebox{0.6}{${a}^{\texttt{d}}$}}}   %
\newcommand{\contdiv}{\beta_{\scalebox{0.6}{$b$}}}                    %
\newcommand{\lambdadg}{\lambda_{\texttt{DG}}}

\DeclareMathOperator{\id}{id}

\DeclareMathOperator{\Div}{\mathrm{div}}%
\DeclareMathOperator{\Grad}{\nabla}%

\DeclareMathOperator\Span{span}

\newcommand\restr[2]{{\left.\kern-\nulldelimiterspace #1 \right|_{#2} }}

\newcommand{\avg}[1]{\{\!\!\{ #1\}\!\!\}}
\newcommand{\jmp}[1]{[\![#1]\!]}

\newcommand{\up}{\xi}    %
\newcommand{\vq}{\eta}   %
\newcommand{\rs}{\sigma} %
\newcommand{\z}{\zeta}   %

\newcommand{\IB}{\mathbb{B}}

\newcommand{\IL}{\mathbb{L}}
\newcommand{\ILu}{\IL^{\!\!u}}   %
\newcommand{\ILp}{\IL^{\!\!p}}   %

\newcommand{\IP}{\mathbb{P}}
\newcommand{\IQ}{\mathbb{Q}}
\newcommand{\IR}{\mathbb{R}}

\newcommand{\IT}{\mathbb{T}}

\newcommand{\IW}{\mathbb{W}}
\newcommand{\IX}{\mathbb{X}}
\newcommand{\IXreg}{\mathbb{X}^{\texttt{reg}}}

\newcommand{\IZ}{\mathbb{Z}}

\newcommand{\TR}{\underline{\mathbb{T}}}
\newcommand{\QR}{\underline{\mathbb{Q}}}
\newcommand{\LR}{\underline{\mathbb{L}}}
\newcommand{\XR}{\underline{\mathbb{X}}}

\newcommand{\ZR}{\underline{\mathbb{Z}}}
\newcommand{\XRreg}{\underline{\mathbb{X}}^{\texttt{reg}}}
\newcommand{\ur}{\underline{u}}
\newcommand{\vr}{\underline{v}}
\newcommand{\rr}{\underline{r}}
\newcommand{\zr}{\underline{z}}
\newcommand{\Plift}[1]{\mathcal{P}_{\!#1}} %

\newcommand{\blfvis}{a_h^{\!\texttt{d}}}             %
\newcommand{\blfdiv}{b_h}                             %
\newcommand{\blfconv}[1]{a_{h,#1}^{\!\texttt{c}}}    %
\newcommand{\blfoseen}[1]{k_{h,#1}}                  %
\newcommand{\blftrefftz}[1]{c_{h,#1}}                      %
\newcommand{\blfoseenr}[1]{\underline{k}_{h,#1}}     %
\newcommand{\blftrefftzr}[1]{\underline{c}_{h,#1}}         %

\newcommand{\opconvelem}[1]{A^{\!\texttt{c}}_{\elm,#1}}   %
\newcommand{\opconv}[1]{A^{\!\texttt{c}}_{#1}}         %
\newcommand{\oposeenelem}[1]{K_{\elm,#1}}                    %
\newcommand{\oposeen}[1]{K_{#1}}                          %
\newcommand{\oposeenrelem}[1]{\underline{K}_{\elm,#1}}       %
\newcommand{\oposeenr}[1]{\underline{K}_{#1}}             %
\newcommand{\opcd}[1]{A_{#1}^{\!\texttt{cd}}}        %
\newcommand{\rmod}[1]{\underline{#1}}

\newcommand{\calL}{\mathcal{L}}

\newcommand{\calP}{\mathcal{P}}

\newcommand{\calT}{\mathcal{T}}

\usepackage{pgfplots}
\usepgfplotslibrary{colorbrewer,groupplots}
\pgfplotsset{
  compat=1.15,
  cycle list/Set1-5,
  title style = {font = \small},
  legend style = {font = \small},
  label style = {font = \footnotesize},
  tick label style = {font = \footnotesize},
  PPk/.style={mark=square*},
  TTk/.style={mark=*},
  TTkw/.style={mark=triangle*},
  TTkemb/.style={mark=diamond*},
}
\pgfplotsset{
    discard if not/.style 2 args={
        x filter/.append code={
            \edef\tempa{\thisrow{#1}}
            \edef\tempb{#2}
            \ifx\tempa\tempb
            \else
                
            \fi
        }
    }
}
\pgfplotsset{
    discard if/.style 2 args={
        x filter/.append code={
            \edef\tempa{\thisrow{#1}}
            \edef\tempb{#2}
            \ifx\tempa\tempb
                
            \fi
        }
    }
}

\pgfplotscreateplotcyclelist{paulcolors3}{%
violet,every mark/.append style={solid,fill=violet},mark=square*,very thick,mark size=3pt\\%
teal,every mark/.append style={solid,fill=teal},mark=*,very thick,mark size=2.8pt\\%
orange,every mark/.append style={solid,fill=orange},mark=diamond*,very thick,mark size=2pt\\%
violet,densely dashed,every mark/.append style={solid,fill=violet},mark=square*,very thick,mark size=3pt\\%
teal,densely dashed,every mark/.append style={solid,fill=teal},mark=*,very thick,mark size=2.8pt\\%
orange,densely dashed,every mark/.append style={solid,fill=orange},mark=diamond*,very thick,mark size=2pt\\%
violet,dash dot,every mark/.append style={solid,fill=violet},mark=square*,very thick,mark size=3pt\\%
teal,dash dot,every mark/.append style={solid,fill=teal},mark=*,very thick,mark size=2.8pt\\%
orange,dash dot,every mark/.append style={solid,fill=orange},mark=diamond*,very thick,mark size=2pt\\%
}

\pgfplotscreateplotcyclelist{paulcolors4}{%
violet,every mark/.append style={solid,fill=violet},mark=square*,very thick,mark size=3pt\\%
teal,every mark/.append style={solid,fill=teal},mark=*,very thick,mark size=3pt\\%
orange,every mark/.append style={solid,fill=orange},mark=diamond*,very thick,mark size=3pt\\%
cyan,every mark/.append style={solid,fill=cyan},mark=star,very thick,mark size=3pt\\%
violet,densely dashed,every mark/.append style={solid,fill=violet},mark=square*,very thick,mark size=3pt\\%
teal,densely dashed,every mark/.append style={solid,fill=teal},mark=*,very thick,mark size=2.8pt\\%
orange,densely dashed,every mark/.append style={solid,fill=orange},mark=diamond*,very thick,mark size=2pt\\%
cyan,densely dashed,every mark/.append style={solid,fill=cyan},mark=star,very thick,mark size=3pt\\%
violet,dash dot,every mark/.append style={solid,fill=violet},mark=square*,very thick,mark size=3pt\\%
teal,dash dot,every mark/.append style={solid,fill=teal},mark=*,very thick,mark size=2.8pt\\%
orange,dash dot,every mark/.append style={solid,fill=orange},mark=diamond*,very thick,mark size=2pt\\%
cyan,dash dot,every mark/.append style={solid,fill=cyan},mark=star,very thick,mark size=3pt\\%
}

\pgfplotscreateplotcyclelist{paulcolors2}{%
{violet,every mark/.append style={solid,fill=violet},mark=square*,very thick,mark size=3pt}\\%
{violet,densely dashed,every mark/.append style={solid,fill=violet},mark=square*,very thick,mark size=3pt}\\%
{teal,every mark/.append style={solid,fill=teal},mark=*,very thick,mark size=3pt}\\%
{teal,densely dashed,every mark/.append style={solid,fill=teal},mark=*,very thick,mark size=2.8pt}\\%
}

\usepackage{pgfplotstable} 
\makeatletter
\pgfplotstableset{
	discard if not/.style 2 args={
	row predicate/.append code={
	\def\pgfplotstable@loc@TMPd{\pgfplotstablegetelem{##1}{#1}\of}
	\expandafter\pgfplotstable@loc@TMPd\pgfplotstablename
	\edef\tempa{\pgfplotsretval}
	\edef\tempb{#2}
	\ifx\tempa\tempb
	\else
	\pgfplotstableuserowfalse
	\fi
	}
	}
}
\makeatother

\usepackage[plain]{algorithm}
\usepackage{algpseudocodex}

\title
{
Embedded Trefftz DG method for steady Navier--Stokes flow
\\[4pt]\large Part I: Oseen linearization}
\author{%
  Paul Stocker\thanks{Faculty of Mathematics, University of Vienna,
  Oskar-Morgenstern-Platz 1, 1090 Vienna, Austria.}
  \and
  Igor Voulis\thanks{Institute for Numerical and Applied Mathematics,
  University of G\"ottingen, Lotzestr.\ 16--18, 37083 G\"ottingen, Germany.}
  \and
  Christoph Lehrenfeld\footnotemark[2]
  \and
  Philip L.\ Lederer\thanks{Department of Mathematics, University of Hamburg,
  Bundesstra{\ss}e 55, 20146 Hamburg, Germany.}
}

\date{}

\begin{document}
\maketitle

\begin{abstract}
    We develop an embedded Trefftz-DG method for the Oseen problem and prove a complete stability and quasi-optimality theory in standard DG norms.
    The key ingredient is a construction of a suitable local complement space to the Trefftz space, on which the Oseen operator is stably invertible.
    We also derive a reduced formulation of the method, the resulting system is posed in terms of the velocity unknown only, a crucial step in the analysis especially for the nonlinear Navier--Stokes problem in Part~II.

\medskip
\noindent\textbf{Keywords}: 
embedded Trefftz methods, discontinuous Galerkin methods, Oseen problem

\medskip
\noindent\textbf{MSC2020}:
65N30, %
65N12, %
65N15, %
76D07, %
76M10 %
\end{abstract}

\section{Introduction}\label{sec:intro}

The present paper is the first part of a two-part work on embedded Trefftz discontinuous Galerkin methods for time-harmonic incompressible flow problems.
In this first part we consider the steady Oseen problem, which forms the linear building block for the Navier--Stokes analysis developed in the second part.
A very detailed comparison with other methods in terms of computational cost is given in \cite{LLS_NM_2024} for the Stokes problem, and the exact same comparison applies to the Oseen setting as well.

Trefftz methods, dating back to Trefftz \cite{trefftz1926}, are based on approximation spaces contained in the kernel of the underlying differential operator.
By building the PDE locally into the discrete space, they can deliver high approximation quality with substantially fewer degrees of freedom than standard polynomial methods, and are therefore particularly appealing in combination with discontinuous Galerkin (DG) discretizations.
Trefftz-DG methods have by now been developed for a broad range of problems; see, for instance, \cite{HMPS14,TrefftzSurvey,EKSW15,Huttunen,GMPS_AML_2023,21M1426079,mope18,bgl2016,bcds20,SpaceTimeTDG,KSTW2014,tpwave}.
For a detailed comparison to other methods on general meshes, see \cite{LSZ_PAMM_2024}.
At the same time, classical Trefftz constructions remain most natural for homogeneous linear equations with piecewise-constant coefficients, while variable coefficients and nonzero right-hand sides are well known to be significantly more challenging.

The embedded Trefftz-DG approach introduced in \cite{lozinski19,LS_CMAME_2016} avoids the explicit construction of local Trefftz bases.
Instead, one starts from an underlying polynomial DG space and imposes the PDE only through local projection constraints, thereby selecting a PDE-adapted subspace inside the full DG space.
This point of view makes Trefftz techniques accessible for variable-coefficient operators and inhomogeneous right-hand sides in a generic way.
The approach has already been investigated for a wide range of problems \cite{SV_ARXIV_2026, HLSW_M2AN_2022, M25, S23, H24, PIMPS_ARXIV_2026, GPS_JSC_2025}.
Most importantly for the present work is the study of the Stokes problem presented in \cite{LLS_NM_2024}, which is the direct predecessor of the present paper. 
There, higher-order pressure components are locally tied to the velocity unknowns, so that only piecewise-constant pressures remain globally coupled, and full stability and quasi-optimality were established.
A unified abstract local-global framework for embedded Trefftz methods was subsequently formulated in \cite{LLSV_ARXIV_2024}.

The extension from Stokes to Oseen is, however, far from formal.
The convection term introduces variable coefficients through the prescribed convection field, so the constant-coefficient arguments used in the Stokes analysis no longer apply directly.
In particular, the approximation arguments in \cite{LLS_NM_2024} rely on Taylor-polynomial constructions on affinely shifted Trefftz spaces, and these are tied to the constant-coefficient structure of the Stokes operator.
For the Oseen operator, a different route is needed.
The key idea of the present paper is therefore to exploit the abstract local-global splitting from \cite{LLSV_ARXIV_2024} and to construct suitable local complement spaces on which the Oseen operator is stably invertible.
This leads naturally to a reduced formulation for the velocity unknown, in which the higher-order pressure components are again locally determined by the velocity and only the piecewise-constant pressure component remains globally coupled.
The resulting analysis removes the saddle-point structure at the critical local step and allows us to transfer coercivity from the Stokes setting to reduced Oseen-Trefftz spaces under a suitable smallness condition on the convection field.

At the DG level, our construction is also connected to the discrete functional analysis framework of Di~Pietro and Ern \cite{DE10,DiPietroErn}, in particular to the treatment of convective terms and to energy-stable DG formulations for incompressible flow problems.
More broadly, the Oseen problem sits at the interface between classical DG discretizations of viscous incompressible flows and Trefftz-type model reduction ideas.
While Trefftz techniques for Stokes problems do exist, they are much less developed than for wave propagation problems; see, for instance, the spectral-type constructions in \cite{POITOU2000561,Bouberbachene}, the spectral method of \cite{LifitsQTSM}, and the collocation Trefftz approach of \cite{21710}.
As well as approximations via Solenoidal functions, i.e. element-wise divergence-free, a partial requirement of the PDE, in \cite{montlaur2010discontinuous, montlaur08}.
Another important and complementary strategy for reducing globally coupled degrees of freedom is provided by hybridizable DG methods \cite{cockburn2009unified}.
However, hybridization and Trefftz reduction act in rather different ways, and their benefits are to a large extent competing rather than cumulative.

\paragraph{Contributions.}
This paper extends the embedded Trefftz-DG methodology from the Stokes problem \cite{LLS_NM_2024} to the steady Oseen equations and provides the linear foundation for the nonlinear Navier--Stokes analysis in Part~II \cite{SVLL2_ARXIV_2026}.
Compared with the Stokes predecessor, the present work contains three main new ingredients:
\begin{itemize}
    \item We construct suitable local complement spaces $\IL(T)$ for the Oseen operator.
    In contrast to the Stokes case, where the construction relied on constant-coefficient Taylor-polynomial arguments, the convection term leads to a variable-coefficient local operator and requires a more delicate construction.

    \item We treat inhomogeneous right-hand sides through a local problem built directly into the formulation, rather than through a correction argument as in the Stokes analysis.
    This is better adapted to the Oseen setting and fits naturally into the abstract local-global splitting framework of \cite{LLSV_ARXIV_2024}.

    \item We derive a reduced formulation for the velocity unknown and prove reduced inf--sup stability for the full Oseen operator.
    More precisely, after eliminating the higher-order pressure components locally, we establish stability of the reduced Oseen problem and transfer it to the full embedded Trefftz-DG discretization, leading to well-posedness and quasi-optimality in standard DG norms.
\end{itemize}
These linear results provide the key analytical groundwork for the nonlinear Navier--Stokes theory developed in Part~II, \cite{SVLL2_ARXIV_2026}.

\paragraph{Outline.}
In \cref{sec:model} we introduce the steady Oseen problem as the linearization of the Navier--Stokes equations.
The embedded Trefftz-DG discretization and the local Trefftz constraint are presented in \cref{sec:trefftz_oseen}, including the convection trilinear form.
The local-global splitting framework and the construction of the complement spaces $\IL(T)$ are developed in \cref{sec:local_global}.
Based on this, \cref{sec:reduced_formulation} introduces the reduced formulation in which the higher-order pressure components are locally tied to the velocity unknown.
The core stability and error analysis of the Oseen Trefftz-DG method is carried out in \cref{sec:oseenanalysis}.
Numerical results, including Oseen--Kovasznay tests and parameter studies, are presented in \cref{sec:numerics}.
Finally, auxiliary discrete estimates and concrete choices of the convection form are collected in \cref{sec:appendix,sec:choices_ch}.

\subsection{The Oseen problem}\label{sec:model}
The Oseen problem is the canonical linearization of the steady incompressible Navier--Stokes equations around a given convection field.
It is obtained by freezing the advecting velocity in the nonlinear term, replacing $(u\cdot\nabla)u$ by $(w\cdot\nabla)u$ for a prescribed field $w$.
As such, the Oseen problem serves as the fundamental linear building block for Navier--Stokes theory and, in particular, for fixed-point and iterative linearization arguments.

Consider an open bounded Lipschitz domain $\Omega \subset \mathbb{R}^d$ with $d=2, 3$.
The central model problem of this paper is the \emph{Oseen problem}, obtained by linearizing the steady Navier--Stokes equations around a given convection field $w \in [L^{4}(\Omega)]^d$:
Find velocity $u: \Omega \to \mathbb{R}^d$ and pressure $p: \Omega \to \mathbb{R}$ such that
\begin{eqs}\label{eq:oseen}
-\nu\Delta u + (w\cdot\nabla)u + \nabla p &= f, \quad \text{in } \Omega, \\
\Div u &= 0, \quad \text{in } \Omega, \\
u &= 0, \quad \text{on } \partial\Omega,
\end{eqs}
where $f: \Omega \to \mathbb{R}^d$ is the external body force and $\nu > 0$ is the dynamic viscosity.
For the ease of presentation we only consider homogeneous Dirichlet boundary conditions.
The Navier--Stokes problem (obtained by setting $w=u$ in~\eqref{eq:oseen}) is treated in Part~II, \cite{SVLL2_ARXIV_2026}.

\section{Embedded Trefftz-DG discretization}\label{sec:trefftz_oseen}

\subsection{DG discretization of the Oseen problem}\label{sec:oseen}

We consider a sequence of shape-regular simplicial meshes $\Th$ of a polygonal domain $\dom$.
We introduce the underlying DG space given by
\begin{equation} \tag{$\IX$}
\IX^k(\Th) = [\IP^k]^d(\Th) \times \IP^{k-1}(\Th) /  \IR
\end{equation}
where $\IP^k(\Th)$ denotes the broken space of polynomials of degree at most $k$ on each element $T \in \Th$.
We adopt three notational conventions throughout this paper to keep the number of indices manageable (they will be plentiful enough regardless):
\begin{enumerate}[leftmargin=2em,itemsep=0pt,topsep=2pt]
  \item We drop the customary subscript $h$ from discrete spaces, even though all spaces below are mesh-dependent. This departs from much of the FEM and DG literature.
  \item All discrete spaces are element-wise defined on $\Th$; when no mesh argument is written, the full mesh $\Th$ is understood. Thus $\IX^k$ stands for $\IX^k(\Th)$, while $\IX^k(T)$ refers to the local space on a single element $T\in\Th$. The same convention applies to all discrete spaces $\IT_w$, $\IL$, $\IQ$, $\IP^k$, $\IW$, \ldots\ introduced below.
  \item The superscript $\cdot^k$ indicating the polynomial degree is suppressed throughout, unless the specific degree is important in context, for instance $\IX = \IX^k (=\IX^k(\Th))$. For the plain polynomial spaces $\IP^k$, $\IP^{k-1}$, $\IP^{k-2}$ the superscript is always retained.
\end{enumerate}
Let $\Fh=\Fhi\cup\Fhb$ be the set of all (interior and boundary, resp.) facets.
On $F\in\Fhi$ we use the usual definitions of averages $\avg{\cdot}$ and jumps $\jmp{\cdot}$ across facets.
On $F\in\Fhb$ we set $\avg{\phi}=\phi$ and $\jmp{\phi}=\phi$.
With abuse of notation, we write $h$ both for the global mesh size $h:=\max_{T\in\Th}h_T$ and for the corresponding piecewise constant mesh-size functions on $\Th$, $\Fh$, and $\partial\Th$.
Thus $h|_T=h_T$ on elements, $h|_F=h_F:=\diam(F)$ on facets, and on element-boundaries $F\subset\partial T$ we may take $h|_F=h_T$. 
By shape regularity, these choices are uniformly equivalent, $h_F\simeq h_T$ for $F\subset\partial T$.

We define the standard interior penalty DG bilinear forms for the diffusion and pressure-velocity coupling as follows:
\begin{subequations}
    \begin{align} \label{eq:sipdg}
        \blfvis(u_h,v_h)\! &\coloneqq (\nu\! \Grad\! u_h,\Grad\! v_h)_\Th\! -\! (\avg{ \nu \partial_n u_h },\jmp{v_h})_\Fh \!-\! (\avg{\nu \partial_n v_h},\jmp{u_h})_\Fh  \!+\!  \frac{\lambda \nu }{h} (\jmp{u_h},\jmp{v_h})_\Fh, \\ 
        \blfdiv(v_h,p_h)\! &\coloneqq -(\Div v_h, p_h)_{\Th} \!+\! (\jmp{v_h \cdot n}, \avg{p_h})_{\Fh},
        \label{eq:divdg}
    \end{align}
\end{subequations}
where we used the notation $(\cdot,\cdot)_\Th := \sum_{T\in\Th}(\cdot,\cdot)_T$.
The interior penalty parameter $\lambda = \mathcal{O}(k^2)$ is chosen sufficiently large (depending on $k$ and shape regularity) and we used the notation $\partial_n w \coloneqq \nabla w \cdot n$ where $n$ is the unit normal to the facet.
We are ready to introduce the bilinear form $\blfoseen{w}$, writing $\up_h:=(u_h,p_h)$ and $\vq_h:=(v_h,q_h)$, as
\begin{align}\tag{$\blfoseen{w}$}
    \blfoseen{w}(\up_h,\vq_h) := \blfvis(u_h,v_h) + \blfdiv(u_h,q_h) + \blfdiv(v_h,p_h) + \blfconv{w}(u_h,v_h),
\end{align}
where $\blfconv{w}$ is an abstract trilinear form discretizing the convective term $(w\cdot\nabla)u$; its precise assumptions and concrete choices are discussed below in \cref{sec:ch}.
The corresponding DG method then reads as: Find $\up_h \in \IX$, s.t. 
\begin{equation} \tag{DG}\label{eq:DG}
 \blfoseen{w}(\up_h,\vq_h) = (f, v_h)_\Th \quad \forall \vq_h \in \IX.
\end{equation}

\subsection{Local operator and Trefftz condition}

We now introduce our Trefftz space as the local kernel of a relaxed PDE operator.
Rather than requiring functions to lie in the kernel of the strong PDE operator---which may be trivial for $w\ne 0$---we relax the condition by testing only against a suitable lower-order polynomial space.
We therefore first introduce the test space
\begin{equation}\tag{$\IQ$}
    \IQ(T) = [\IP^{k-2}(T)]^d \times \IP^{k-1}(T), \quad \IQ=\IQ(\Th) := \prod_{T\in\Th} \IQ(T),
\end{equation}
where we write elements as $\rs_h = (r_h,s_h) \in \IQ(T)$.

With $\IXreg(T) := [H^2(T)]^d\times H^1(T)$, we define the local PDE operator $\oposeenelem{w}: \IXreg(T) \to \IQ(T)'$ directly via the duality pairing
for $\rs_h = (r_h,s_h) \in \IQ(T)$ and 
$\up = (u,p) \in \IXreg(T)$:
\begin{equation}\tag{$\oposeenelem{w}$}\label{eq:BKw}
    \langle \oposeenelem{w} \up,\, \rs_h \rangle_T
    :=
    h\nu^{-\frac12}\big(-\nu\Delta u + w\cdot\nabla u + \nabla p,\,r_h\big)_T
    +
    \nu^{\frac12} \big(-\Div u,\,s_h\big)_T.
\end{equation}
Accordingly, we set $\oposeen{w}: \IXreg \to \IQ'$ via $\langle \oposeen{w} \up_h,\, \rs_h \rangle_\Th = \sum_{T\in\Th} \langle \oposeenelem{w} \up_h,\, \rs_h \rangle_T$. Here $\IXreg = \IXreg(\Th) = [H^2(\Th)]^d \times H^1(\Th)$.
The scaling factor $h\nu^{-\frac12}$ in the first component balances the scaling with respect to our choice of norms, as we will see in the analysis.

With this PDE operator defined, we set the Trefftz space as the subspace of $\IX$ satisfying the Trefftz condition element-wise:
\begin{align}\label{eq:Trefftzconditions} \tag{$\IT_{w}$}
    \IT_{w} :=&
   \{ \up_h \in \IX \mid 
    \langle \oposeen{w} \up_h,\rs_h\rangle_T = 0
    \quad\forall \rs_h\in \IQ(\Th)\}.
\end{align}
For $w=0$ (the Stokes case), 
the residuals $-\nu\Delta u_h+\nabla p_h$ and $-\Div u_h$ already lie in $\IQ(T)$ for any $\up_h\in\IX(T)$, $T\in\Th$, so $\oposeen{0}$ coincides with the (element-wise) strong Stokes operator.
For $w\neq 0$, the pairing $(w\cdot\nabla u_h,r_h)_\Th$ implicitly involves only the $L^2$-projection of $w\cdot\nabla u_h$ into $[\IP^{k-2}]^d$, i.e.\ functions in $\IT_w$ fulfill the PDE only in a relaxed sense.

\subsection{The embedded Trefftz-DG formulation}

With the above definitions, we are ready to introduce the embedded Trefftz-DG method for the Oseen problem.
The Trefftz-DG method to \eqref{eq:oseen} then reads: Find $\up_h = (u_h,p_h)\in \IX$ so that
\begin{subequations} \label{eq:tdgoseen}
  \begin{align}
\blfoseen{w}(\up_h,\vq_h) &= (f,v_h)_{\Th}  && \forall \vq_h = (v_h,q_h) \in \IT_{w} \label{eq:tdgoseen:1}\\ 
\big\langle \oposeen{w}\up_h,\rs_h\big\rangle_\Th &= \big(h \nu^{-\frac12}f,r_h\big)_\Th && \forall \rs_h = (r_h,s_h) \in \IQ. \label{eq:tdgoseen:2}
  \end{align}
\end{subequations}
\cref{eq:tdgoseen} is a coupled system for the unknown $\up_h$ in the ambient DG space $\IX$, where the first line corresponds to the (global) problem on the Trefftz space and the second line corresponds to \emph{local problems} on each element.
For compact notation of the coupled system \eqref{eq:tdgoseen}, we introduce the product test space $\IZ_{w}:=\IT_w\times \IQ$ and 
introduce the bilinear and linear forms
\begin{align}\tag{$\blftrefftz{w}$}
    \blftrefftz{w}(\up_h,\z_h) &:= 
    \blfoseen{w}(\up_h,\vq_h) +
    \langle \oposeenelem{w} \up_h,\rs_h\rangle_\Th,
    & \up_h &\in \IX,\\
\tag{$F$}
F(\z_h) &:= (f,v_h)_\Th + (h\nu^{-\frac12}f,r_h)_\Th,
& \z_h = (\vq_h,\rs_h) = ((v_h, q_h), (r_h, s_h)) &\in \IZ_w.
\end{align}
so that \eqref{eq:tdgoseen} also reads: Find $\up_h \in \IX$, so that $\blftrefftz{w}(\up_h,\z_h) = F(\z_h)$ for all $\z_h \in \IZ_w$.

\begin{figure}[htbp!]
 \centering
  \vspace*{-0.2cm}
\begin{tabular}{ccc}
 \includegraphics[width=0.45\textwidth,page=1]{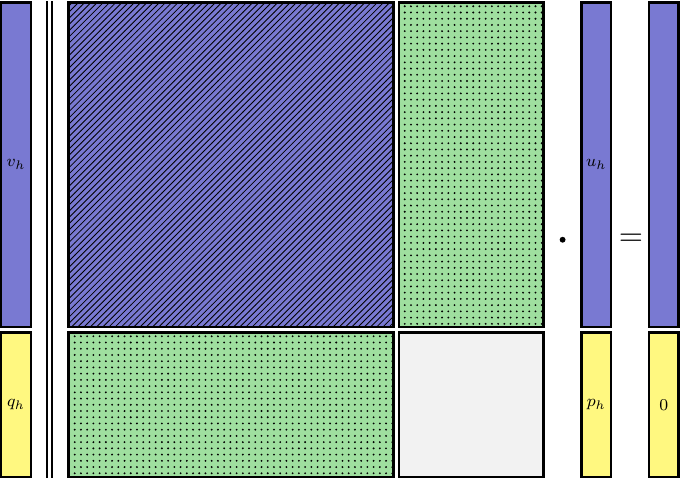}
 &~~&
 \includegraphics[width=0.45\textwidth,page=3]{matrix_schematic.pdf} 
\end{tabular}
  \vspace*{-0.3cm}
\caption{Schematic comparison between Standard DG formulation in velocity and pressure unknowns and equations (left) vs. Trefftz formulation as a block triangular system coupling Trefftz solution and local solution (right).}
 \label{fig:block_structure_comparison}
 \end{figure}

This can be interpreted as a (upper) block triangular system where the lower right block is additionally block-diagonal with blocks given by the local operator $\oposeen{w}$. This hence yields a different algebraic structure as the usual DG method, cf.\ \cref{fig:block_structure_comparison} for a schematic comparison of the algebraic structure of the standard DG method and the embedded Trefftz-DG method.

\paragraph{On the implementation.}
The system \eqref{eq:tdgoseen} is a coupled upper-triangular block system that can be solved by first solving the local problems on each element, possibly in parallel, cf.\ \cref{fig:block_structure_comparison},
which allows us to compute the Trefftz space via embedding in the same breath.
And then solving the global problem on the much smaller Trefftz space.
In this sense, the solution is split into a local part $\up_\IL = (u_\IL,p_\IL)$ and a global part $\up_\IT = (u_\IT,p_\IT) \in \IT_{w}$, where the local part is given by the solution of the local problems and the global part is then solved for by
\[
    \blfoseen{w}(\up_\IT,\vq_\IT) = (f,v_\IT)_{\Th} - 
    \blfoseen{w}(\up_\IL,\vq_\IT)  \quad \forall \vq_\IT = (v_\IT,q_\IT) \in \IT_{w}.
\]
The local problems, can be solved on each element by computing a pseudo-inverse of the local operator $\oposeenelem{w}: \IX(T) \to \IQ(T)'$.
Using an QR-decomposition or SVD of the matrix representation of $\oposeenelem{w}$, we can compute simultaneously an embedding for the Trefftz space $\IT_{w}$ into the ambient $\IX$ space and the pseudo-inverse of $\oposeenelem{w}$.
The implementation is discussed in more detail in \cite{LS_IJMNE_2023}.

We note that the Trefftz-DG method presented here yields a solution in $\IX$. Above, we defined the Trefftz subspace $\IT_w$, but not a complement space to $\IX$. We do not need to explicitly construct such a complementary space for the realization of the method. In contrast, for the analysis we will consider a specific complementary space $\IL$ so that $\IX = \IT_w \oplus \IL$.

\paragraph{Norms and projections.}
Below, we use the mesh dependent DG norms for $u\in [H^1(\Th)]^d$, $v\in [H^2(\Th)]^d$:
\begin{align} \tag{$\Vert \cdot \Vert_{1,h(,\ast)}$}
  \norm{u}_{1,h}^2 \!:= \!\norm{\Grad u}_{\Th}^2 \!+\! \norm{ h^{-\frac12} \jmp{u}}^2_{\Fh},~~~
    \norm{v}_{1,h,\ast}^2 \!:=\! \norm{v}_{1,h}^2 \!+\! \norm{ h^{\frac12} \ \partial_n v}^2_{\partial \Th}
    + \norm{h \Delta v}^2_{\Th},
\end{align}
For $u_h\in[\IP^k]^d$, the inverse inequality \eqref{la}, and a trace inequality shows that the two versions of the mesh-dependent norm are equivalent on $[\IP^k]^d$.

Let us also equip the spaces $\IX$ and $\IZ_w$ with suitable norms.
For $(u,p) \in \IXreg$ and $\z_h=(\vq_h,\rs_h) \in \IZ_w$ we introduce the following mesh-dependent norms:
\begin{align*}
  \norm{p}_{0,h}^2&:= \norm{ h \Grad p}_{\Th}^2 + \norm{ h^\frac12 \jmp{\Pi^0 p}}_{\Fhi}^2,  
  & 
  \dgnorm{(u,p)}^2 &:= \nu \norm{u}_{1,h}^2 + \nu^{-1} \norm{p}_{0,h}^2, 
  \\
  \dgsnorm{(u,p)}^2 &:= \nu \norm{u}_{1,h,\ast}^2 + \nu^{-1} \norm{p}_{0,h}^2,
  &
  \|\z_h\|_{\IZ}^2 &:=\dgnorm{\vq_h}^2 + \|\rs_h\|_{\IQ}^2,
\end{align*}
where for $\rs_h = (r_h,s_h) \in \IQ$ we introduce the norm
\[
    \|\rs_h\|_{\IQ}^2 :=\|r_h\|_{\Th}^2+\|s_h\|_{\Th}^2. 
\]
We let $\|\cdot\|_{\IQ'}$ be the induced dual norm; by the standard $L^2$-identification we have $\|\cdot\|_{\IQ'}=\|\Pi_{\IQ}\cdot\|_{\IQ}$ where $\Pi_{\IQ}$ is the element-wise orthogonal projection from $[L^2(T)]^d \times L^2(T)$ to $\IQ(T)$.

On the broken space $W^{1,4}(\Th)$ we introduce 
\begin{equation}\label{eq:w14h}\tag{$|\cdot|_{1,4,h}$}
|w|_{1,4,h}
:=
\|w\|_{L^4(\Th)}
+
\|h \nabla w\|_{L^4(\Th)}
+
\|h^{\frac14}\jmp{w\cdot n}\|_{L^4(\Fh)} .
\end{equation}

Throughout, $\Pi^\ell_S$ denotes the $L^2$-orthogonal projection onto $\IP^\ell(S)$ for scalar functions, or onto $[\IP^\ell(S)]^d$ for vector-valued functions (the dimension is clear from context). More generally, $\Pi$ with suitable super- and subscripts always denotes an $L^2$-orthogonal projection; for instance $\Pi^0$ is the elementwise mean, and $\Pi_T^{k-2}$ projects onto $[\IP^{k-2}(T)]^d$.

\subsection{The convection trilinear form}\label{sec:ch}

To discretize the convective term $(w\cdot\nabla)u$ many choices are possible.
For now, we keep the discretization of the convection term abstract, and denote it as trilinear form $\blfconv{w}$.
We give concrete possible (and standard) choices for the trilinear forms in \cref{sec:choices_ch}.
The analysis of the method only relies on some abstract assumptions on the trilinear form which are fulfilled for the standard choices.

We define the subspace of admissible convection fields as
\begin{equation}\label{eq:IWh}\tag{$\IW$}
    \IW := [W^{1,4}(\Th)]^d.
\end{equation}
We define the resolution quantity
\begin{equation}\tag{$|\cdot|_{h,d}$}
|w|_{h,d}
:=
\nu^{-1}\Big(
\max_{T\in \Th} C_4 \|h^{1-\frac{d}{4}} w\|_{L^4(T)}
+
\max_{T\in \Th} C_{W,4}\|h^{2-\frac{d}{4}} \nabla w\|_{L^4(T)}
+
\max_{F\in \Fh} C_{J,4}\|h^{\frac14}\jmp{w\cdot n}\|_{L^4(F)}
\Big),
\end{equation}
where $C_4$ is the constant from the local discrete embedding \eqref{l4} and $C_{W,4}$ is the facet trace constant from \eqref{eq:w14_trace}.
Notice that for fixed $w$, the quantity $|w|_{h,d}$ goes to zero as $h\to 0$; this is the basis for a resolution condition.

For polynomial convection fields $w_h\in[\IP^k]^d$, the quantity $|w_h|_{h,d}$ is weaker than the discrete $H^1$ norm by a factor $h^{1-\frac d4}$:
\begin{equation}\label{eq:gamma_norm_bound}
|w_h|_{h,d}
\leq \nu^{-1} C_\gamma h^{1-\frac{d}{4}} \| w_h\|_{1,h}.
\end{equation}
Indeed, the first term is controlled by the discrete Sobolev embedding \eqref{se}, the second and third by inverse estimates and the polynomial trace inequality \eqref{eq:L4_trace_poly}.

\begin{assumption}[\emph{Conditions on $\blfconv{w}$}]\label{ass:ch}
There exists $\contconv>0$ (depending only on shape-regularity and $k$) such that for all $w\in \IW$, $u\in [H^2(\Th)]^d$, and $v_h\in[\IP^k]^d$,
\begin{equation}\label{eq:ch_cont}
  |\blfconv{w}(u,v_h)|
  \le
  \contconv\,|w|_{1,4,h}\,\|u\|_{1,h,\ast}\,\|v_h\|_{1,h}.
\end{equation}
We additionally assume
\begin{equation}\label{eq:ch_nonneg}
\blfconv{w}(u_h,u_h) \ge 0\qquad\forall w\in\IW,\ \forall u_h\in[\IP^k]^d,
\end{equation}
and the mixed $L^4$-estimate
\begin{equation}\label{eq:ch_mixed_L4}
|\blfconv{w}(z,v_h)|
\le C_{t,\ast}\nu|w|_{h,d}\|z\|_{1,h,\ast}\|v_h\|_{1,h},
\qquad \forall w\in\IW,\ \forall v_h\in[\IP^k]^d,\ \forall z\in \IX_{\rm ps}(\Th),
\end{equation}
where $\IX_{\rm ps}\subset [H^2(\Th)]^d$ is a space that satisfies a Poincar\'e estimate, i.e.
on all $K\in\Th$ and for all $z$ in $\IX_{\rm ps}$ there holds
$\|z\|_K\leq C h_K \|\nabla z\|_K$.
\end{assumption}
In the analysis below, $\IX_{\rm ps}(\Th)$ will be instantiated either by the local complement space $\ILu$ or by approximation-error classes $u-\vr_h$ satisfying the elementwise mean constraint $\Pi^0\vr_h=\Pi^0 u$.
Concrete choices of $\blfconv{w}$ satisfying \cref{ass:ch} are given in \cref{sec:choices_ch}.

In this work, we will further make the \emph{resolution assumption} 
\begin{equation}\label{ass:wh}\tag{A1}
    w \in \IW \quad\text{and}\quad |w|_{h,d}\leq c_\gamma,
\end{equation}
for some fixed $c_\gamma$, depending only on $k$ and the shape-regularity of the mesh elements.
Note that this is not a smallness assumption on the data, but rather a resolution assumption, as $|w|_{h,d} \to 0$ for $h \to 0$. This resolution assumption will play a crucial role in the analysis of the validity of the Trefftz method, and the analysis of the Oseen problem.

\section{Local-global splitting}\label{sec:local_global}

Throughout, $C>0$ denotes a generic constant depending only on $k$ (and $d$), and may change from line to line.

\subsection{Construction of \texorpdfstring{$\IL$}{IL}}\label{ssec:IL}

While an explicit construction of the complement space to the Trefftz space is not needed for the implementation of the method, 
a crucial aspect of the \emph{analysis} is the choice of a proper local spaces $\IL(T)$ that complement $\IX(T)$, so that $\IX(T) = \IL(T) \oplus \IT_{w}(T)$ for all $T \in \Th$ and hence $\IX(\Th) = \IL(\Th) \oplus \IT_{w}(\Th)$. The specific choice below enables the stability and well-posedness analysis of the coupled system.
    
We define the local complement space component-by-component, i.e. we define the velocity and pressure components separately, and then combine them to get $\IL(T) = \ILu(T) \times \ILp(T)$. Correspondingly, we have $\dim(\IL(T)) = \dim(\ILu(T)) + \dim(\ILp(T))$ which needs to match $\dim(\IQ(T)) = \dim(\IX(T)) - \dim(\IT_{w_h}^k(T)) = d \cdot \dim(\IP^{k-2}(T)) + \dim(\IP^{k-1}(T))$.

\subsubsection{Velocity component of the complement space}\label{sssec:Lu}

The construction of the velocity component $\ILu(T)$ of $\IL(T)$ is done in three steps. On each element $T \in \Th$:
\begin{enumerate}
    \item We first consider polynomials of degree $k$ that vanish on a ball $B_T$ associated to $T$.
    \item We further enrich by $\Span\{\chi, n_{T,1}, \ldots, n_{T,d}\}$ to get control over the facet mean fluxes
    \item Finally, we constrain the mean flux on $d$ (of $d+1$) facets to be zero.
\end{enumerate}

For each element $T \in \Th$, we denote by $n_{T,1},\ldots,n_{T,d+1}$ the unit outward normals to the facets of $T$ and by $F_{T,1},\ldots,F_{T,d+1}$ the corresponding facets. We also denote by $\chi = x - x_T$ the local coordinate vector centered at the barycenter $x_T$ of element $T$ and define $B_T$ as the largest ball \emph{centered at $x_T$} contained in $T$, the radius of which we denote as $h_T$, cf.\ \cref{fig:IL_geometry}. This gives the precise meaning of the mesh-size parameter $h_T$ used throughout the paper; it is equivalent to the element diameter up to a constant depending only on the shape-regularity of $\Th$.
\begin{figure}[htbp!]
\vspace*{-0.8cm}
\centering
\begin{subfigure}[b]{0.35\textwidth}
\centering
\begin{tikzpicture}[scale=1.9]
  \coordinate (A) at (0, 0);
  \coordinate (B) at (2, 0);
  \coordinate (C) at (0.7, 1.5);
  
  \coordinate (G) at ($(A)!0.5!(B)!0.33!(C)$);
  
  \draw[] (A) -- (B) -- (C); %
  \draw[gray!50!purple, thick] (C) -- (A); %
  
  \filldraw[black] (G) circle (1pt) node[anchor=south west, inner sep=3pt] {$x_T$};
  
  \draw[dashed, red,fill=teal!20] (G) circle (0.49cm);
  \draw[] (G) circle (0.02cm) node[anchor=north east, scale=0.7] {$x_T$};
  
  \draw[->] (G) -- ($(G)+(0.2,0)$) node[scale=0.7, anchor=north] {$\chi_1$};
  \draw[->] (G) -- ($(G)+(0,0.2)$) node[scale=0.7, anchor=east] {$\chi_2$};

  \node[teal!50!black, anchor=south] at ($(G)+(-0.54,-0.45)$) {$B_T$};
  
  \node[anchor=north east] at ($(A)!0.5!(B)$) {$F_{T,1}$};
  \node[anchor=west] at ($(B)!0.4!(C)$) {$F_{T,2}$};
  \node[anchor=east] at ($(C)!0.6!(A)$) {$F_{T,3}$};

  \node[anchor=west] at (B) {$T$};

  \coordinate (mid1) at ($(A)!0.5!(B)$);
  \coordinate (mid2) at ($(B)!0.5!(C)$);
  \coordinate (mid3) at ($(C)!0.5!(A)$);
  
  \draw[-{Latex[length=2mm]}, thick] (mid1) -- ++(0, -0.35) node[anchor=north, inner sep=2pt] {$n_{T,1}$};
  \draw[-{Latex[length=2mm]}, thick] (mid2) -- ++(0.3, 0.25) node[anchor=west, inner sep=2pt] {$n_{T,2}$};
  \draw[-{Latex[length=2mm]}, thick] (mid3) -- ++(-0.3, 0.15) node[anchor=east, inner sep=2pt] {$n_{T,3}$};
\end{tikzpicture}
\caption{Geometry of element $T$. \\ \vphantom{argh}}
\label{fig:IL_geometry_tikz}
\end{subfigure}%
\hfill
\begin{subfigure}[b]{0.6\textwidth}
\centering
\includegraphics[width=0.8\textwidth]{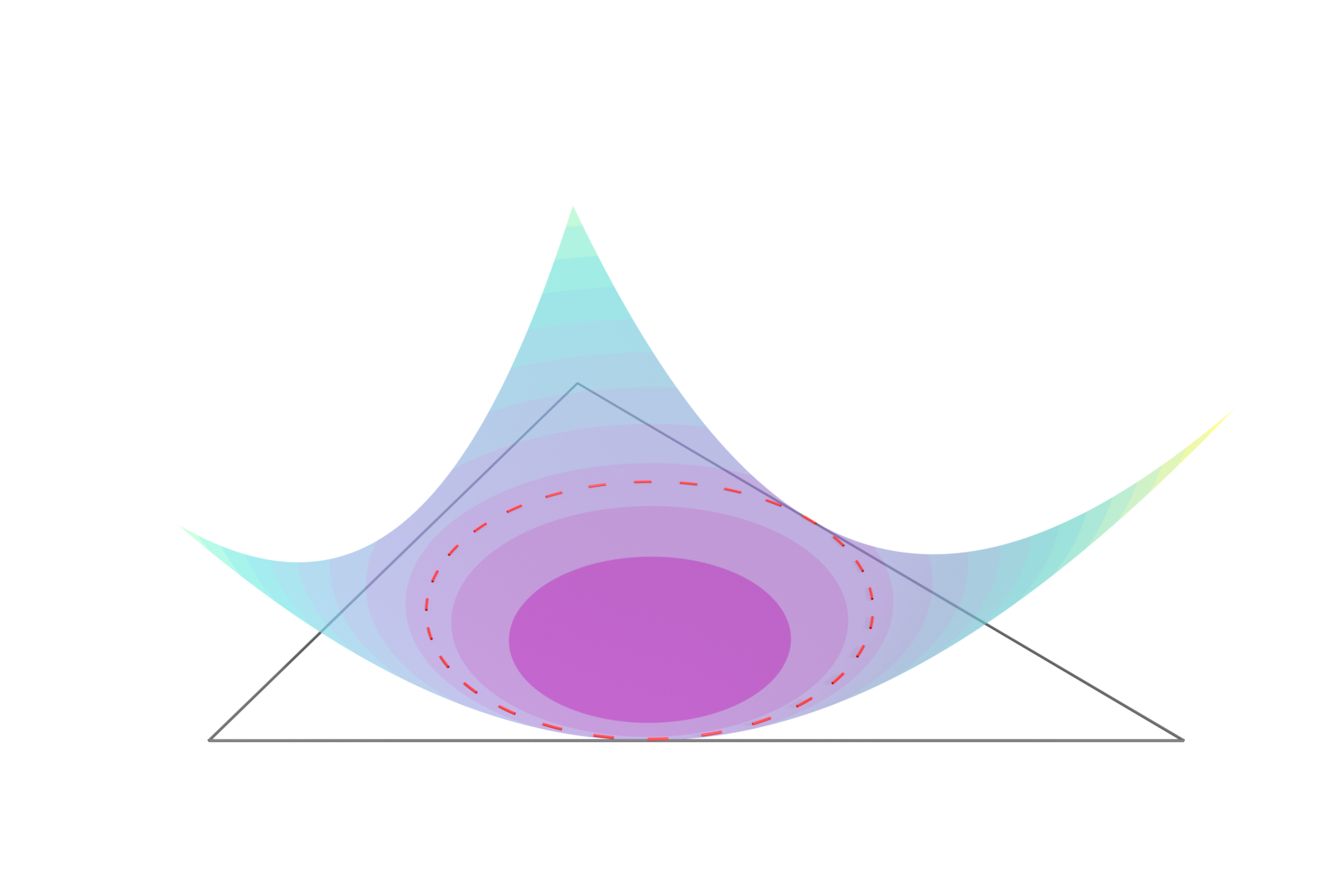}
\vspace*{-0.2cm}
\caption{Elevation plot of a bubble function $(|\chi|^2-h_T^2)\cdot q$ on $T$; the ball boundary $\partial B_T$ is highlighted in (dashed) red.}
\label{fig:IL_bubble}
\end{subfigure}
\caption{Sketch of element geometry and an example bubble function as in $\ILu(T)$.}
\label{fig:IL_geometry}
\end{figure}

We define the space of vector-valued polynomials of degree at most $k$ on $T$ that vanish on $\partial B_T$ as 
\[
    \IB^k_0(T):= \IP^k(T) \cap H^1_0(B_T).
\]
This is the first part in the definition of each velocity component of $\ILu(T)$. The construction of this space allows to relate the Laplacian appearing in $\oposeenelem{w}$ to the $H^1$-semi norm --- at least on $B_T$ --- through partial integration: $(-\Delta u_h, q_h)_{B_T} = (\nabla u_h, \nabla q_h)_{B_T}$ for all $u_h \in [\IB^k_0(T)]^d$ (and suitable $q_h$). A ball as the domain for fixing zeros is chosen so that we can factor out the quadratic bubble $(|\chi|^2-h_T^2)$ and characterize
\[
    \IB^k_0(T) = (|\chi|^2-h_T^2)\IP^{k-2}(T) \quad \Rightarrow \quad \dim([\IB^k_0(T)]^d) = d \cdot \dim(\IP^{k-2}(T)).
\]
The next part in the construction of the velocity component $\ILu(T)$ of $\IL(T)$ is designed to control the net fluxes on the element and the facets. Each element in $\Span\{\chi\}$ has constant divergence and is introduced to have an explicit degree of freedom --- lying in the kernel of $-\nu \Delta$ --- to control the mean flux on the element. The additional $d$-dimensional space of constant vectors $\Span\{ n_{T,1}, \ldots, n_{T,d}\} (= \mathbb{R}^d)$ is introduced to similarly control the mean flux on the first $d$ facets of the element while lying in the kernel of $\oposeenelem{w}$. 

In the final step, we make use of the added degrees of freedom in the second step to constrain the mean flux on the first $d$ facets to zero. We hence define:
\begin{equation}\tag{$\ILu$}
    \ILu(T) := \{ u_h \in [\IB^k_0(T)]^d + \Span\{\chi, n_{T,1}, \ldots, n_{T,d}\} : \int_{F_{T,i}} u_h \cdot n_{T,i} = 0, i=1,\ldots,d\},
\end{equation}
and find $\dim(\ILu(T)) = \dim([\IB^k_0(T)]^d) + d + 1 - d = \dim([\IP^{k-2}(T)]^d) + 1$. Note that the remaining total facet flux on $F_{T,d+1}$ compensates for the accumulated divergence on the element in the sense that 
\begin{equation} \label{eq:flux_divergence_relation}
\int_T \Div u_h = \sum_{i=1}^{d+1} \int_{F_{T,i}} u_h \cdot n_{T,i} = \int_{F_{T,d+1}} u_h \cdot n_{T,d+1} \qquad \forall u_h \in \ILu(T),
\end{equation}

The main motivation for the introduction of the constants $\Span\{n_{T,1},\dots,n_{T,d}\}$ alongside the constraints flux constraints on the edge $F_{T,i}$, $i=1,..,d$ is the following simple observation due to \eqref{eq:flux_divergence_relation}: 
\begin{lemma}\label{eq:divrelation}
For all $u_h \in \ILu$ with $\Div u_h|_\Th = 0$ we have $\Pi_F^0 (u_h \cdot n)= 0$ on all facets $F \in \Fh$. 
\end{lemma}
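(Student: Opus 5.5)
The argument is element-local and rests on the divergence identity \eqref{eq:flux_divergence_relation}. Facet means are interpreted from the side of each element, since $u_h\in\ILu$ is discontinuous; on an interior facet $F$ shared by $T$ and $T'$ the claim is that $\Pi_F^0(u_h|_T\cdot n_T)=0$ and $\Pi_F^0(u_h|_{T'}\cdot n_{T'})=0$ separately. The quantity $\Pi_F^0(u_h\cdot n)$ is the constant $|F|^{-1}\int_F u_h\cdot n$, so it suffices to show $\int_F u_h|_T\cdot n_{T,F}=0$ for every $T\in\Th$ and every facet $F\subset\partial T$. This covers both traces on interior facets and the single trace on boundary facets.

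Fix $T$ and write $u_h|_T\in\ILu(T)$. For the facets $F_{T,1},\dots,F_{T,d}$ the facet flux vanishes directly by the defining constraints of $\ILu(T)$, and the divergence assumption is not needed. For the remaining facet $F_{T,d+1}$ I would apply \eqref{eq:flux_divergence_relation}, which follows from the Gauss theorem on $T$ (valid since $u_h|_T$ is polynomial) combined with those $d$ constraints:
\[
\int_{F_{T,d+1}} u_h\cdot n_{T,d+1}=\int_T \Div u_h = 0,
\]
because $\Div u_h|_T=0$ by assumption. Hence all $d+1$ facet fluxes of $u_h|_T$ vanish.

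Since $T$ was arbitrary, this holds for every element trace on every facet. So both one-sided normal traces, and hence also the jump $\jmp{u_h\cdot n}$ and the average, have zero facet mean on every $F\in\Fh$.

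There is no real obstacle here. The only point needing care is interpretive: $u_h\cdot n$ is two-valued on interior facets. I would state explicitly that the result holds for each one-sided trace, which is the strongest reading and implies the version for jumps.
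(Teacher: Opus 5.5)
Your proof is correct and is exactly the argument the paper has in mind: the fluxes through $F_{T,1},\dots,F_{T,d}$ vanish by the defining constraints of $\ILu(T)$, and the flux through $F_{T,d+1}$ equals $\int_T \Div u_h = 0$ by \eqref{eq:flux_divergence_relation}. Your clarification that the result holds for each one-sided trace, and hence for the jump mean used in \cref{lem:LBB_P0}, is a sensible addition.
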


\begin{remark}\label{rem:ILu_poincare}
The space $\ILu(T)$ contains no nonzero constant vectors, i.e.
$ \ILu(T)\cap [\IP^0(T)]^d=\{0\}.$
Consequently, $\ILu(T)$ satisfies a Poincar\'e estimate and in particular, it is an admissible choice for $\IX_{\rm ps}(\Th)$ in \eqref{eq:ch_mixed_L4}.
\end{remark}

\subsubsection{Pressure component of complement space} 
The pressure of the local problem appears only in terms of its gradient. Hence, we choose zero-mean polynomials of degree at most $k-1$ for the pressure component of $\IL(T)$, i.e. we define:
\begin{equation}\tag{$\ILp$}
    \ILp(T) = \{p \in \IP^{k-1}(T) : \int_T p = 0\}.
\end{equation}

\subsubsection{The velocity-pressure complement space} 
Putting both together we define 
\begin{equation}\tag{$\IL$}
    \IL(T) = \ILu(T) \times \ILp(T).
\end{equation}
and find $\dim(\IL(T)) = \dim(\IP^{k-2}(T)) + 1 + \dim(\IP^{k-1}(T)) - 1 = d \cdot \dim(\IP^{k-2}(T)) + \dim(\IP^{k-1}(T)) = \dim(\IQ(T))$, which matches the dimension of the test space for the local problem, as required for the invertibility of the local operator $\oposeenelem{w}$ on $\IL(T)$. We then compose canonically, $\IL = \IL(\Th) = \prod_{T\in\Th} \IL(T)$.

\subsection{Local operator}\label{ssec:local_op}
Having defined the complement space $\IL(T)$ in \cref{ssec:IL}, we now establish that the local operator $\oposeenelem{w}:\IL(T)\to\IQ'(T)$ is boundedly invertible.
This is the key structural result that justifies the local-global splitting: invertibility of $\oposeenelem{w}$ on $\IL(T)$ implies the direct-sum decomposition $\IX = \IT_w \oplus \IL$ (cf.\ \cref{cor:local_invertibility}), so every discrete function in $\IX$ can be uniquely split into a Trefftz component in $\IT_w$ and a complement component in $\IL$, and within the Trefftz DG method the latter can be solved for (locally on each element independently).

The analysis is carried out in two steps.  We first treat the \emph{Stokes case} $w=0$ in \cref{sssec:Stokes}. 
We then extend to the \emph{Oseen case} \cref{sssec:Oseen} $w\neq 0$ by treating the convection term as a perturbation and showing that invertibility is preserved whenever  resolution is sufficient.

\subsubsection{Stokes case \texorpdfstring{$w=0$}{w=0}}\label{sssec:Stokes}
We start off with the following continuity result for the local Stokes operator $\oposeenelem{0}$.
\begin{lemma}[Continuity of $\oposeenelem{0}$]\label{lem:AKw_continuity}
For all $\up = (u,p)\in \IXreg$ there holds
\begin{equation}\label{eq:AKzero_continuity}
    \|\oposeen{0} \up \|_{\IQ'}^2 = \sum_{T\in\Th}\|\oposeenelem{0} \up \|_{\IQ(T)'}^2
    \le
    \contlocal{0}^2 \,\,\dgsnorm{ \up }^2,
\end{equation}
with $\contlocal{0}>0$ depending only on the shape-regularity of $\Th$ and on $k$.
\end{lemma}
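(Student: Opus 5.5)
The plan is to bound the dual norm element by element and then sum. Fix $T\in\Th$ and $\up=(u,p)\in\IXreg(T)$. By definition of $\oposeenelem{0}$ and the $L^2$-identification of $\|\cdot\|_{\IQ(T)'}$, the dual norm equals
\[
\|\oposeenelem{0}\up\|_{\IQ(T)'}^2 = \big\|\Pi^{k-2}_T\big(h\nu^{-\frac12}(-\nu\Delta u+\nabla p)\big)\big\|_T^2 + \big\|\Pi^{k-1}_T\big(\nu^{\frac12}\Div u\big)\big\|_T^2 .
\]
Since $L^2$-projections are contractions, it suffices to bound the unprojected quantities. Alternatively, one can apply Cauchy--Schwarz directly to the pairing with $\rs_h=(r_h,s_h)$ and take the supremum over $\|\rs_h\|_{\IQ(T)}\le 1$. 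Both routes give
\[
\|\oposeenelem{0}\up\|_{\IQ(T)'}^2 \le 2\nu\|h\Delta u\|_T^2 + 2\nu^{-1}\|h\nabla p\|_T^2 + \nu\|\Div u\|_T^2 .
\]

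Next I compare each term with the norm $\dgsnorm{\cdot}$ restricted to $T$:
\begin{itemize}
\item $\nu\|h\Delta u\|_T^2$ is exactly the $\norm{h\Delta u}^2$ contribution of $\nu\norm{u}_{1,h,\ast}^2$.
\item $\nu\|\Div u\|_T^2 \le d\,\nu\|\nabla u\|_T^2$ by the pointwise inequality $|\Div u|^2\le d|\nabla u|^2$, so it is controlled by $\nu\norm{u}_{1,h}^2$.
\item $\nu^{-1}\|h\nabla p\|_T^2$ is exactly the gradient part of $\nu^{-1}\norm{p}_{0,h}^2$.
\end{itemize}
The facet contributions (jumps, normal derivatives, jumps of $\Pi^0 p$) are not needed, but they only enlarge the right-hand side, so dropping them is harmless.

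Summing over $T\in\Th$ gives the claimed estimate with $\contlocal{0}^2=\max\{2,d\}$, which is in fact independent of $k$ and of shape regularity. The equality $\|\oposeen{0}\up\|_{\IQ'}^2=\sum_T\|\oposeenelem{0}\up\|_{\IQ(T)'}^2$ holds because $\IQ=\prod_T\IQ(T)$ carries the product $\ell^2$-norm, so the dual norm decouples elementwise.

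There is no real obstacle here. The only point to be careful about is the scaling: one should check that the factor $h\nu^{-1/2}$ in the momentum component is exactly what turns $\nu\Delta u$ into $\nu^{1/2}h\Delta u$ and $\nabla p$ into $\nu^{-1/2}h\nabla p$, matching the weights $\nu$ and $\nu^{-1}$ in $\dgsnorm{\cdot}$.
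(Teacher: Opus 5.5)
Your proof is correct and follows essentially the same route as the paper: identify the dual norm through the $L^2$-projections onto $\IQ(T)$ and drop them by contractivity. Then split the momentum residual with Young's inequality, bound $\|\Div u\|_T$ by $\|\nabla u\|_T$, and sum over the elements. Your explicit constant $\contlocal{0}^2=\max\{2,d\}$ relies on $|\Div u|^2\le d|\nabla u|^2$, which is the precise form of the paper's slightly loose $\|\Div u\|_T\le\|\nabla u\|_T$, so it is a valid sharpening of the paper's generic $C$.
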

\begin{proof}
The $L^2$ representation of $\oposeen{0} \up$ in $\IQ$ is $(h_T\nu^{-\frac12} \Pi^{k-2} (-\nu \Delta u + \nabla p), - \Pi^{k-1} \Div u )$.   
With the continuity of the $L^2$ projection we then have on each element $T\in\Th$
\[
\|\oposeenelem{0}\up\|_{\IQ(T)'}^2
\le
\|h_T\nu^{-\frac12} (-\nu\Delta u + \nabla p) \|_T^2
+\nu \|\Div u\|_T^2,
\]
Using Cauchy-Schwarz inequality 
 and $\|\Div u_h\|_T\le \|\nabla u_h\|_T$ we bound 
\begin{align*}
\|\oposeenelem{0}\up_h\|_{\IQ(T)'}^2
\le C\Big(
    \nu\,\| \nabla u\|_T^2 + \nu\,\| h_T \Delta u\|_T^2
    + \nu^{-1}\|h_T\nabla p_h\|_T^2
\Big).
\end{align*}
Summing over $T\in\Th$ gives \eqref{eq:AKzero_continuity}.
\end{proof}

We now wish to verify that the local Stokes operator $\oposeen{0}$ is invertible on a suitable subspace of $\IX$.  As a intermediate result, we use the following estimate for functions in the bubble space $[\IB^k_0(T)]^d = [\IP^k(T) \cap H^1_0(B_T)]^d$ enriched by $\Span\{\chi\}$. 
\begin{lemma}[Auxiliary estimate for a bubble space]\label{lem:ball}
    There exists a constant $C>0$ depending only on $k$ (and $d$) such that for all
    $u_h\in [\IB^k_0(T)]^d \oplus \Span\{\chi\}$ and all $p_h\in \IP^{k-1}(T)$ there holds
    \begin{equation}\label{eq:ball_est}
        \|\nabla u_h\|_{T}^2
        \le
        C\Big(
            \frac{h_T^2}{\nu^2}\|-\nu \Delta u_h + \nabla p_h\|_{T}^2
            + \|\Div u_h\|_{T}^2
        \Big).
    \end{equation}
\end{lemma}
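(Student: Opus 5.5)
The estimate \eqref{eq:ball_est} is an equivalence of (semi)norms on a finite-dimensional space, and I would prove it by a scaling and compactness argument. The only genuine content is a definiteness property, and this is where the particular choice of the ball $B_T$ and of the enrichment $\Span\{\chi\}$ is used.

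\emph{Step 1: remove $\nu$ and $p_h$.} I first eliminate $\nu$. Write $-\nu\Delta u_h+\nabla p_h=\nu(-\Delta u_h+\nabla \tilde p_h)$ with $\tilde p_h:=p_h/\nu\in\IP^{k-1}(T)$. Then the first term on the right of \eqref{eq:ball_est} equals $h_T^2\|-\Delta u_h+\nabla\tilde p_h\|_T^2$. Since $\tilde p_h$ is arbitrary, it suffices to show
\[
\|\nabla u_h\|_T^2\le C\Big(h_T^2\inf_{q\in\IP^{k-1}(T)}\|-\Delta u_h+\nabla q\|_T^2+\|\Div u_h\|_T^2\Big).
\]
The right-hand side is $\nu$-independent and defines a seminorm in $u_h$: the infimum is the distance of $-\Delta u_h$ to the finite-dimensional subspace $\nabla\IP^{k-1}(T)$, which is a seminorm.

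\emph{Step 2: pass from $T$ to the unit ball.} By shape regularity, $B_T\subset T\subset B(x_T,c\,h_T)$ with $c$ depending only on shape regularity. Since all functions involved are polynomials of fixed degree, $\|\nabla u_h\|_T\le C\|\nabla u_h\|_{B_T}$ with $C$ depending on $k$ and $c$. The right-hand side over $T$ dominates the same expression over $B_T$. Next I rescale with $x=x_T+h_T\hat x$, which maps $B_T$ to the unit ball $\hat B$. Under this map the bubble $|\chi|^2-h_T^2$ becomes $h_T^2(|\hat x|^2-1)$ and $\chi$ becomes $h_T\hat x$, so the space $[\IB^k_0(T)]^d\oplus\Span\{\chi\}$ is mapped onto the fixed space
\[
\hat V:=(|\hat x|^2-1)[\IP^{k-2}]^d\oplus\Span\{\hat x\}.
\]
Both sides scale identically in $h_T$: the factor $h_T^2$ in front of the Laplacian term is exactly what is needed. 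Hence it suffices to prove
\[
\|\hat\nabla \hat u\|_{\hat B}^2\le C\big(\inf_q\|-\hat\Delta\hat u+\hat\nabla q\|_{\hat B}^2+\|\hat\Div\hat u\|_{\hat B}^2\big)\qquad\text{on }\hat V.
\]
On a fixed finite-dimensional space, a seminorm is bounded by a norm, so this holds once the right-hand side is a norm on $\hat V$. Given definiteness, the right-hand side is such a norm, and $\|\hat\nabla\cdot\|$ is a seminorm bounded by any norm on finite dimensions. The constant then depends only on $k$, $d$ and shape regularity.

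\emph{Step 3: definiteness (main step).} Suppose $\hat u=b\hat q+\alpha\hat x$ with $b=|\hat x|^2-1$, and that $\Div\hat u=0$ and $-\Delta\hat u+\nabla q=0$ on $\hat B$ for some $q$. Integrating the divergence over $\hat B$ gives $0=\int_{\hat B}\Div(b\hat q)+d\,\alpha|\hat B|$. The first integral vanishes by the divergence theorem, since $b\hat q=0$ on $\partial\hat B$. Hence $\alpha=0$ and $\hat u\in H^1_0(\hat B)$.

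Testing $-\Delta\hat u+\nabla q=0$ with $\hat u$ and integrating by parts, with no boundary terms because $\hat u\in H^1_0(\hat B)$, gives
\[
\|\nabla\hat u\|_{\hat B}^2=(q,\Div\hat u)_{\hat B}=0 .
\]
So $\hat u$ is constant, and by its zero trace $\hat u=0$.

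This step is where the construction matters. The $\chi$-enrichment is killed only by the mean-divergence argument, and the bubble part needs the exact vanishing on $\partial\hat B$ to drop the boundary terms. I therefore expect Step 3 to be the crux, while Steps 1–2 are routine.
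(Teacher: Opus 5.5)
Your proof is correct, but it is organized differently from the paper's.

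\emph{The paper's route.} The paper works constructively on $B_T$. It splits off $u_\ell=d^{-1}\Pi^0(\Div u_h)\,\chi$, using that bubbles have zero mean divergence on $B_T$. It then decomposes the remaining bubble $H^1_0(B_T)$-orthogonally into a solenoidal part $u_{\Div}$ and a complement $u_\perp$. The complement $u_\perp$ is controlled by $\|\Div u_h\|$ via a finite-dimensional norm equivalence (the constant $c_\perp$). The solenoidal part $u_{\Div}$ is controlled by testing the momentum residual with $u_{\Div}$, where the pressure drops out because $u_{\Div}$ is solenoidal with zero trace, followed by Poincar\'e on the ball.

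\emph{Your route.} You eliminate $\nu$ and $p_h$ once and for all by passing to $\inf_{q\in\IP^{k-1}}\|-\Delta u_h+\nabla q\|$. You then scale to the unit ball and reduce everything to a kernel check on the fixed space $(|\hat x|^2-1)[\IP^{k-2}]^d\oplus\Span\{\hat x\}$. That kernel check uses exactly the paper's two ingredients: the mean-divergence argument to remove the $\chi$-component, and integration by parts with vanishing trace on $\partial B_T$. The existence of $q$ in Step 3 is justified because the infimum over the finite-dimensional space $\nabla\IP^{k-1}$ is attained.

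\emph{Comparison.} Your argument is shorter and makes the independence of $\nu$ and $h$ transparent. The price is a completely non-explicit constant coming from compactness. The paper's decomposition shows which residual controls which component. It gives explicit constants for the $\chi$-part ($d^{-1/2}$) and the solenoidal part (the Poincar\'e constant of the ball), leaving only $c_\perp$ to a finite-dimensional argument.

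Both proofs need shape regularity to get $\|\nabla u_h\|_T\lesssim\|\nabla u_h\|_{B_T}$. So your stated dependence of $C$ on shape regularity is accurate; the lemma's ``only on $k$'' tacitly includes it.
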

\begin{proof}
We will show \eqref{eq:ball_est} w.r.t. to the $L^2(B_T)$-norms, which is sufficient since  the norms on $T$ and $B_T$ are equivalent on the finite-dimensional space $[\IB^k_0(T)]^d \oplus \Span\{\chi\}$.
In the following we will derive a splitting $u_h = u_\ell + u_0 = u_\ell + u_{\perp} + u_{\Div}$ for which the obvious estimate 
\[
    \|\nabla u_h\|_{B_T}^2
    \le C\big(\|\nabla u_0\|_{B_T}^2+\|\nabla u_\ell\|_{B_T}^2\big)
    = C\big(\|\nabla u_{\Div}\|_{B_T}^2+\|\nabla u_\perp\|_{B_T}^2+\|\nabla u_\ell\|_{B_T}^2\big)
\]
holds. Subsequently, we will derive the splitting and a bound for each of these summands. 

\noindent
\emph{1. Split off the affine part.}
Set
\[
    u_\ell := \frac{\Pi^0 (\Div u_h)}{\Div (\chi)} \,\chi = d^{-1} \Pi^0 (\Div u_h) \, \chi \in \Span\{\chi\},
    \qquad
    u_0:=u_h-u_\ell\in \IB^k_0,
\]
so that $\Pi^0 \Div u_h = \Pi^0 \Div u_\ell$ and $\Pi^0 \Div u_0 = 0$ and with $\nabla \chi = I$ we find
$\|\nabla u_\ell\|_{B_T}\le d^{-\frac12} \|\Div u_h\|_{B_T}$.

\noindent
\emph{2. $H^1_0$-orthogonal decomposition of $u_0$ into $u_{\Div}$ and $u_\perp$.} %
Let
\[
    \IB_{\Div}(T):=\{v\in \IB^k_0(T) \mid \Div v=0\},
\]
and let $\IB_{\Div}^\perp(T)$ be its orthogonal complement in $\IB^k_0(T)$ with respect to
$(\nabla\cdot,\nabla\cdot)_{B_T}$.
Then $u_0=u_{\Div}+u_\perp$ with $u_{\Div}\in \IB_{\Div}(T)$, $u_\perp\in \IB_{\Div}^\perp(T)$, and
\[
    \|\nabla u_0\|_{B_T}^2=\|\nabla u_{\Div}\|_{B_T}^2+\|\nabla u_\perp\|_{B_T}^2,
    \qquad
    \Div u_0=\Div u_\perp.
\]
Since $v\mapsto \|\Div v\|_{B_T}$ is a norm on the finite-dimensional space $\IB_{\Div}^\perp(T)$, there exists
$c_\perp>0$ depending only on $k$ such that
\begin{align*}
    \|\nabla u_\perp\|_{B_T} &\le c_\perp \|\Div u_\perp\|_{B_T} =  c_\perp \|\Div u_0\|_{B_T}
    \le \|\Div u_h\|_{B_T}+\|\Div u_\ell\|_{B_T} \\ 
    & \le \|\Div u_h\|_{B_T}+\|\nabla u_\ell\|_{B_T} 
    \le (1+d^{-\frac12}) \|\Div u_h\|_{B_T}.
\end{align*}

\noindent
\emph{3. Stokes estimate on the divergence-free part.}
Since $u_{\Div}\in [H^1_0(B_T)]^d$ and $\Div u_{\Div}=0$, we have $(\nabla p_h,u_{\Div})_{B_T}=0$.
Moreover $u_\ell$ is affine, hence $\Delta u_h=\Delta u_0$.
Thus, integrating by parts, Cauchy--Schwarz and Poincar\'e on $B_T$
\begin{align*}
    \nu\|\nabla u_{\Div}\|_{B_T}^2
    & =(\nu\nabla (u_0-  u_\perp),\nabla u_{\Div})_{B_T}
    =(\nu\nabla u_0,\nabla u_{\Div})_{B_T}
    =(-\nu\Delta u_0,u_{\Div})_{B_T} \\
    & =(-\nu\Delta u_h+\nabla p_h,u_{\Div})_{B_T}
    \le \|-\nu\Delta u_h+\nabla p_h\|_{B_T}\,\|u_{\Div}\|_{B_T} \\
    & \le C \|-\nu\Delta u_h+\nabla p_h\|_{B_T}\, h \, \|\nabla u_{\Div}\|_{B_T}.
\end{align*}
Hence,
\[
    \|\nabla u_{\Div}\|_{B_T}^2
    \le C^2\,\frac{h^2}{\nu^2}\,\|-\nu\Delta u_h+\nabla p_h\|_{B_T}^2 .
\]
\end{proof}
With the above estimate at hand, we can now show the following result for the local Stokes operator $\oposeenelem{0}$ on $\IL(T)$.
\begin{theorem}[Stable invertibility of $\oposeen{0}$ on $\IL$]\label{thm:AK0}
For $T \in \Th$, the local Stokes operator $\oposeenelem{0}:\IL(T)\to \IQ(T)'$ is a bijection. Moreover, there exists $\stablocal{0}>0$,
depending only on $k$ and the shape-regularity of $T$ (in particular through the constant in
\Cref{lem:ball}), such that for all $\up_h\in \IL(T)$ there holds
\begin{equation}\label{eq:AK0_coerc}
  \|\oposeenelem{0}\up_h\|_{T} \ge
  \stablocal{0}\Big(
    \nu\|\nabla u_h\|_T^2
    + \nu h_T^{-1}\|u_h\|_{\partial T}^2
    + \nu^{-1}\| h_T\nabla p_h\|_T^2
    \Big)^{\frac12}\!
\end{equation}
Hence, $\oposeen{0}: \IL \to \IQ'$ is also a bijection and $\Vert \oposeen{0} \up_h \Vert_{\IQ'} \ge \stablocal{0} \dgnorm{\up_h}$ for all $\up_h \in \IL$. %
\end{theorem}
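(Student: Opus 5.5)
Since $\dim\IL(T)=\dim\IQ(T)$, as computed in \cref{ssec:IL}, bijectivity of $\oposeenelem{0}:\IL(T)\to\IQ(T)'$ follows from injectivity. Injectivity in turn follows from the lower bound \eqref{eq:AK0_coerc}, because its right-hand side is a norm on $\IL(T)$: $\nabla p_h=0$ together with zero mean gives $p_h=0$, and $\nabla u_h=0$ makes $u_h$ constant, hence $u_h=0$ by \cref{rem:ILu_poincare}. So the work is the estimate. I will represent $\oposeenelem{0}\up_h$ in $\IQ(T)$ by $\big(h_T\nu^{-\frac12}(-\nu\Delta u_h+\nabla p_h),\,-\nu^{\frac12}\Div u_h\big)$. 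For $\up_h\in\IX(T)$ both residuals already lie in $[\IP^{k-2}]^d\times\IP^{k-1}$, so no projection is lost, and
\[
\|\oposeenelem{0}\up_h\|_{\IQ(T)'}^2=\nu^{-1}h_T^2\|-\nu\Delta u_h+\nabla p_h\|_T^2+\nu\|\Div u_h\|_T^2 .
\]

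\emph{Step 1 (velocity gradient).} Split $u_h=u_b+c$ with $u_b\in[\IB^k_0(T)]^d\oplus\Span\{\chi\}$ and $c\in\Span\{n_{T,1},\dots,n_{T,d}\}=\IR^d$ constant. Then $\nabla u_h=\nabla u_b$, $\Delta u_h=\Delta u_b$ and $\Div u_h=\Div u_b$. \cref{lem:ball} applied to $u_b$ and $p_h$ gives
\[
\nu\|\nabla u_h\|_T^2\le C\|\oposeenelem{0}\up_h\|^2 .
\]

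\emph{Step 2 (pressure gradient).} Use the triangle inequality
\[
\nu^{-1}h_T^2\|\nabla p_h\|_T^2\le 2\nu^{-1}h_T^2\|-\nu\Delta u_h+\nabla p_h\|_T^2+2\nu h_T^2\|\Delta u_h\|_T^2 .
\]
An inverse inequality gives $h_T\|\Delta u_h\|_T\le C\|\nabla u_h\|_T$, and Step 1 controls the last term.

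\emph{Step 3 (boundary term).} This is the main obstacle: the constant part $c$ is invisible to $\nabla u_h$, so it must be recovered from the facet-flux constraints. On the finite-dimensional space $\ILu(T)$ I claim that
\[
h_T^{-1}\|u_h\|_{\partial T}^2\le C\|\nabla u_h\|_T^2 .
\]
By scaling to a reference configuration (the constant depends only on $k$ and shape regularity, via a compactness argument over shape-regular simplices), it suffices to show that $\nabla u_h=0$ forces $u_h=0$ on $\ILu(T)$, which is exactly \cref{rem:ILu_poincare}. To spell it out: if $\nabla u_h=0$, then $u_h$ is constant. Any constant element of $[\IB_0^k]^d\oplus\Span\{\chi\}$ is zero, since the $\chi$-coefficient is fixed by the divergence and a constant vanishing on $\partial B_T$ is zero. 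So $u_h=c\in\IR^d$, and the constraints $c\cdot n_{T,i}=0$ for $i=1,\dots,d$ with linearly independent normals give $c=0$.

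A trace inequality combined with this Poincaré-type bound then yields the boundary term.

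Finally, summing the elementwise estimates over $T\in\Th$ gives $\|\oposeen{0}\up_h\|_{\IQ'}\ge\stablocal{0}\dgnorm{\up_h}$. The jump terms $\|h^{-\frac12}\jmp{u_h}\|_{\Fh}$ and $\|h^{\frac12}\jmp{\Pi^0p_h}\|$ are bounded by elementwise traces; the latter vanishes because $\Pi^0p_h=0$ on $\ILp$.
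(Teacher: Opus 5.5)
Your proof is correct and follows the paper's argument. It uses the same splitting of $u_h$ into a part in $[\IB^k_0(T)]^d\oplus\Span\{\chi\}$ plus a constant, \cref{lem:ball} for the velocity gradient, the triangle and inverse inequality for the pressure gradient, and the dimension count for bijectivity.

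The only deviation is in the boundary term. The paper controls the constant part explicitly through the facet-flux constraints, $\|u_c\|_{\partial T}\le C\|\tilde u_h\|_{\partial T}$, and then applies a trace and Poincar\'e inequality on the constant-free space $[\IB^k_0(T)]^d\oplus\Span\{\chi\}$. You instead combine both facts into a single norm-equivalence (compactness) argument on $\ILu(T)$ via \cref{rem:ILu_poincare}. This is equally valid: after dilating so that $x_T=0$ and $h_T=1$, the unconstrained space is fixed, and only the flux constraints vary continuously with $T$.
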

\begin{proof}
We set $u_h =\tilde u_h - u_c$ with $\tilde u_h \in [\IB_0^k(T)]^d + \Span\{\chi\}$, $u_c \in [\IP^0(T)]^d = \Span\{n_{T,1},\dots,n_{T,d}\}$.
Explicitly $u_c$ is the unique vector such that $\int_{F_{T,i}} \tilde u_h \cdot n_{T,i} = |F_{T,i}| u_c \cdot n_{T,i}$ for $i=1,\ldots,d$, which is well-defined by the linear independence of $n_{T,1}, \ldots, n_{T,d}$.

The bounds for the three contributions $\nu \Vert\nabla u_h\Vert_T^2$, $\nu h_T^{-1} \Vert u_h \Vert_{\partial T}^2$ and $\nu^{-1} \Vert h_T \nabla p_h \Vert_T^2$ are then shown one after another.

\noindent
\emph{1. Control of $\nu \Vert\nabla u_h\Vert_T^2$.}
Applying \eqref{eq:ball_est} yields
\[
  \nu\|\nabla  u_h\|_T^2
  =\nu\|\nabla \tilde u_h\|_T^2
  \le C\Big(
      \|h_T\nu^{-\frac12}(-\nu\Delta u_h+\nabla p_h)\|_T^2
      + \|\nu^{\frac12}\Div u_h\|_T^2
  \Big)
  = C\,\|\oposeenelem{0}\up_h\|_T^2 .
\]
\emph{2. Control of the boundary term $\nu h_T^{-1} \Vert u_h \Vert_{\partial T}^2$.}
Due to $\int_{F_{T,i}} \tilde u_h \cdot n_{T,i} = |F_{T,i}| u_c \cdot n_{T,i}$ for $i=1,\ldots,d$, we have 
\[
    \|u_h\|_{\partial T} \leq \|u_c\|_{\partial T} + \|\tilde u_h\|_{\partial T}
    \le C \|\tilde u_h\|_{\partial T}. 
\]
It hence suffices to show the bound for $\tilde u_h$. By the trace inequality on $T$ and a Poincar\'e estimate on $[\IB_0^k(T)]^d$ (exploiting that it does not contain constants), 
\begin{align*}
    \nu h^{-1}\|\tilde u_h\|_{\partial T}^2
    \le C \nu \|\nabla \tilde u_h\|_T^2
    \stackrel{\eqref{eq:ball_est}}{\le} C  \Vert \oposeenelem{0} \up_h \Vert_T^2.
\end{align*}
\emph{3. Control of pressure contribution $\nu^{-1} \Vert h_T \nabla p_h \Vert_T^2$.}
By the triangle inequality and the inverse bound $h_T\|\Delta u_h\|_T\le C\|\nabla u_h\|_T$,
\[
    \|h_T\nabla p_h\|_T
    \le \|h_T(-\nu\Delta u_h+\nabla p_h)\|_T + \nu\,\|h_T\Delta u_h\|_T
    \le \|h_T(-\nu\Delta u_h+\nabla p_h)\|_T + C\,\nu\|\nabla u_h\|_T.
\]
Multiplying by $\nu^{-1}$ and using step~1 yields
\[
    \nu^{-1}\|h_T\nabla p_h\|_T^2 \le C\,\|\oposeenelem{0}\up_h\|_T^2.
\]
\noindent
\emph{4. Conclusion and bijectivity.}
Combining the above estimates, we obtain \eqref{eq:AK0_coerc}.
In particular, $\oposeenelem{0}\up_h=0$ implies $\up_h=0$, so $\oposeenelem{0}$ is injective on $\IL(T)$.
Since $\IL(T)$ and $\IQ(T)$ are finite-dimensional and have the same dimension (by construction), $\oposeenelem{0}$ is bijective. 
\end{proof}

\subsubsection{Oseen case \texorpdfstring{$w\neq0$}{w not 0}}\label{sssec:Oseen}

Let us first analyze the role of the convective field $w$ in the local operator $\oposeen{w}$.
To this end we consider the difference between Oseen and Stokes operators, $\oposeen{w}-\oposeen{0}\in\calL(\IX,\IQ')$ and denote its $L^2$-representative (under the identification $\IQ'\cong\IQ$) in the velocity component as the local convection operator
\begin{equation}\tag{$\opconv{w}$} \label{eq:opconv}
\opconv{w}: [\IP^{k}]^d \to [\IP^{k-2}]^d, 
\qquad
 u_h \mapsto \Pi_\Th^{k-2}(w\cdot\nabla u_h),
\end{equation}
with $\opconvelem{w}\cdot = (\opconv{w} \cdot)|_T$ for all $T \in \Th$, 
so that $\langle (\oposeenelem{w}-\oposeenelem{0})\up_h,(r_h,s_h)\rangle_T = (h_T \nu^{-\frac12}\, \opconvelem{w}u_h,r_h)_T$ for all $(r_h,s_h)\in\IQ(T)$, 
where $\Pi_T^{k-2}$ is the $L^2$-orthogonal projection from $[L^2(T)]^d$ to $[\IP^{k-2}(T)]^d$.
\begin{lemma}\label{lem:localconvection}
The linear map $\opconvelem{(\cdot)}: [L^4(T)]^d \to \calL([H^1(T)]^d,[\IP^{k-2}(T)]^d),~ w \mapsto \opconvelem{w}$ is bounded with
\begin{subequations}
    \begin{equation}\label{eq:localconvection}
        \| h_T \nu^{-\frac12}\, \opconvelem{w}u\|_{T} \leq   \nu^{\frac12} |w|_{h,d}\|\nabla u\|_{T},
    \end{equation}
    for all $u_h \in [H^1(T)]^d$, $w \in [L^4(\Omega)]^d$, and $T \in \Th$. With 
    $\contlocal{w}^2 = 2 \contlocal{0}^2 + 2 |w|_{h,d}^2$
    it follows 
    \begin{align} \label{eq:AKw_continuity}
        \|\oposeen{w}\up\|_{\IQ'}
        \le \contlocal{w} \,\dgsnorm{\up},\qquad \up \in \IXreg.
    \end{align}
\end{subequations}
\end{lemma}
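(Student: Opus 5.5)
The plan is to prove \eqref{eq:localconvection} element by element, using $L^2$-stability of the projection and a Hölder splitting, and then to get \eqref{eq:AKw_continuity} from \cref{lem:AKw_continuity} by the triangle inequality.

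\emph{Step 1: the local bound \eqref{eq:localconvection}.} Since $\Pi_T^{k-2}$ is an $L^2(T)$-orthogonal projection, we have $\|\opconvelem{w}u\|_T\le\|w\cdot\nabla u\|_T$. For the product we would not use a plain $L^4$--$L^4$ Hölder bound, because it would require $\nabla u\in L^4$, which is not available for $u\in H^1$. Instead we test against $r_h\in[\IP^{k-2}(T)]^d$:
\[
(w\cdot\nabla u,r_h)_T\le \|w\|_{L^4(T)}\|\nabla u\|_T\|r_h\|_{L^4(T)},
\]
and apply the local discrete embedding \eqref{l4} on the polynomial $r_h$, namely $\|r_h\|_{L^4(T)}\le C_4 h_T^{-d/4}\|r_h\|_T$. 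Taking the supremum over $\|r_h\|_T=1$ gives
\[
\|\opconvelem{w}u\|_T\le C_4h_T^{-d/4}\|w\|_{L^4(T)}\|\nabla u\|_T .
\]
Multiplying by $h_T\nu^{-1/2}$ yields $\nu^{-1/2}C_4\|h^{1-d/4}w\|_{L^4(T)}\|\nabla u\|_T$. By the definition of $|\cdot|_{h,d}$, this is at most $\nu^{1/2}|w|_{h,d}\|\nabla u\|_T$, because the first summand of $|w|_{h,d}$ equals $\nu^{-1}\max_T C_4\|h^{1-d/4}w\|_{L^4(T)}$ and the remaining summands are nonnegative. Linearity in $w$ is immediate, and boundedness follows from $\|h^{1-d/4}w\|_{L^4(T)}\le h_T^{1-d/4}\|w\|_{L^4(T)}$.

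\emph{Step 2: the global bound \eqref{eq:AKw_continuity}.} On each element, the velocity component of the representative of $\oposeenelem{w}\up$ is that of $\oposeenelem{0}\up$ plus $h_T\nu^{-1/2}\opconvelem{w}u$. Hence
\[
\|\oposeenelem{w}\up\|_{\IQ(T)'}\le\|\oposeenelem{0}\up\|_{\IQ(T)'}+\nu^{1/2}|w|_{h,d}\|\nabla u\|_T .
\]
Squaring with $(a+b)^2\le2a^2+2b^2$ and summing over $T$, \cref{lem:AKw_continuity} bounds the first part by $2\contlocal{0}^2\dgsnorm{\up}^2$. The second part is at most $2|w|_{h,d}^2\,\nu\|\nabla u\|_\Th^2\le 2|w|_{h,d}^2\dgsnorm{\up}^2$, which gives the constant $\contlocal{w}^2$.

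The main obstacle is Step 1: choosing the duality argument, which places the $L^4$ norm on the polynomial test function, so that only $\|\nabla u\|_T$ in $L^2$ is needed. One must also check that the $h$-powers match the $h^{1-d/4}$ weight exactly through the inverse constant $C_4$ of \eqref{l4}. If \eqref{l4} is stated in a different form, for instance as an embedding from the $L^2$ norm on $\IP^{k-2}$ to $L^4$ with scaling $h^{-d/4}$, the same argument goes through after adjusting the constants.
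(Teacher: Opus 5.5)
Your proposal is correct and follows essentially the same route as the paper: your duality step (Hölder, then \eqref{l4} applied to the polynomial test function $r_h\in[\IP^{k-2}(T)]^d\subset[\IP^k(T)]^d$) is exactly the derivation of the dual estimate \eqref{l4dual} that the paper cites, and the global bound is the same triangle-inequality argument combined with \cref{lem:AKw_continuity}. Squaring element-wise with $(a+b)^2\le 2a^2+2b^2$, instead of applying the triangle inequality to the global $\IQ'$-norm, gives the same constant $\contlocal{w}^2=2\contlocal{0}^2+2|w|_{h,d}^2$.
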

\begin{proof}
For any $w \in [L^4(T)]^d$, $u \in [H^1(T)]$ we have by the dual version of the $L^4$-embedding \eqref{l4dual} and H\"older's inequality
\begin{equation} \label{eq:applHldr}
    \| h_T \nu^{-\frac12}\, \opconvelem{w}u\|_{T}
     \leq C_4  \| \nu^{-\frac12} h_T^{1-\frac{d}{4}} w \cdot \nabla u \|_{L^{\frac43}(T)}
     \leq C_4 \nu^{-\frac12} \|h^{1-\frac{d}{4}} w\|_{L^4(T)} \|\nabla u\|_{L^2(T)}.
\end{equation}
Using the definition of $|w|_{h,d}$, we have $C_4\,\nu^{-\frac12}\|h_T^{1-\frac d4}w\|_{L^4(T)}\le \nu^{\frac12}\,|w|_{h,d}$, which concludes the first part of the proof. 
Next, observe that 
\begin{align*}
\|\oposeen{w}\up\|_{\IQ'} \le 
\|\oposeen{0}\up\|_{\IQ'} 
+ \| h \nu^{-\frac12}\, \opconvelem{w} u\|_{\Th},
\end{align*}
where the first term can be bounded by \Cref{lem:AKw_continuity} and the second term by \eqref{eq:localconvection}. 
\end{proof}

Following the analysis in \cite{LLSV_ARXIV_2024}, we can derive invertibility and stability of the local operator $\oposeenelem{w}$ 
on $\IL(T)$ based on the invertibility of the \emph{prototype operator} $\oposeenelem{0}$ on $\IL(T)$ and the proximity of $\oposeenelem{w}$ to $\oposeenelem{0}$ provided that the convection field $w$ is sufficiently small in the $|w|_{h,d}$-norm.

\begin{corollary}[Stable invertibility of $\oposeen{w}$ on $\IL$]\label{cor:AKw}
    Assume that \eqref{ass:wh} holds with the resolution assumption%
    \begin{equation}\tag{A2}\label{ass:A2}
        |w|_{h,d} \leq c_\gamma<\stablocal{0},
    \end{equation}
    where $\stablocal{0}$ is the constant from
    \cref{thm:AK0}. 
    Then the restriction $\oposeen{w}:\IL\to \IQ'$ is invertible and
    \begin{equation}\label{eq:AKw_coerc}
        \|\oposeen{w}\up_h\|_{\IQ'} \ge \stablocal{w}\,\dgnorm{\up_h},
        \qquad \forall \up_h\in \IL,
    \end{equation}
    with $\stablocal{w}:=\stablocal{0} - c_\gamma.$ %
\end{corollary}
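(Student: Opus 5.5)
Since $\oposeen{w}$ and $\oposeen{0}$ differ only in the convection contribution $(h\nu^{-\frac12}\opconv{w}u_h, r_h)_\Th$, I will treat $\oposeen{w}$ as a perturbation of the prototype $\oposeen{0}$. I then combine the lower bound of \cref{thm:AK0} with the local convection bound \eqref{eq:localconvection} of \cref{lem:localconvection} through a reverse triangle inequality. Everything is elementwise, so I will work on a fixed $T\in\Th$ and sum at the end. For $\up_h=(u_h,p_h)\in\IL(T)$, the $L^2$-representative of $\oposeenelem{w}\up_h$ in $\IQ(T)$ is that of $\oposeenelem{0}\up_h$ plus $(h_T\nu^{-\frac12}\opconvelem{w}u_h,0)$.

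\emph{Step 1 (perturbation bound).} By the reverse triangle inequality in $\IQ(T)'$,
\begin{equation*}
\|\oposeenelem{w}\up_h\|_{\IQ(T)'} \ge \|\oposeenelem{0}\up_h\|_{\IQ(T)'} - \|h_T\nu^{-\frac12}\opconvelem{w}u_h\|_T .
\end{equation*}
The projection $\Pi^{k-2}_T$ is already built into $\opconvelem{w}$, so the second term is a genuine $L^2$ norm. By \eqref{eq:localconvection} and the definition of $|w|_{h,d}$ as a maximum over elements, it is bounded by $\nu^{\frac12}|w|_{h,d}\|\nabla u_h\|_T \le c_\gamma\,\nu^{\frac12}\|\nabla u_h\|_T$.

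\emph{Step 2 (compare with the local norm).} Denote by $N_T(\up_h)$ the bracketed local quantity on the right of \eqref{eq:AK0_coerc}. Then $\nu^{\frac12}\|\nabla u_h\|_T\le N_T(\up_h)$, and \cref{thm:AK0} gives $\|\oposeenelem{0}\up_h\|\ge\stablocal{0}N_T(\up_h)$. Together with Step 1,
\begin{equation*}
\|\oposeenelem{w}\up_h\|_{\IQ(T)'}\ge(\stablocal{0}-c_\gamma)N_T(\up_h)=\stablocal{w}N_T(\up_h),
\end{equation*}
where $\stablocal{w}>0$ by \eqref{ass:A2}. In particular $\oposeenelem{w}$ is injective on $\IL(T)$. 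Since $\dim\IL(T)=\dim\IQ(T)$, it is bijective on $\IL(T)$, and hence $\oposeen{w}:\IL\to\IQ'$ is invertible.

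\emph{Step 3 (globalize).} I square the local bound and sum over $T$. It then remains to show $\dgnorm{\up_h}^2\le\sum_T N_T(\up_h)^2$, up to the constant implicit in \cref{thm:AK0}'s own global claim. This is the only step needing care. For the velocity part, $\nu\|h^{-\frac12}\jmp{u_h}\|_{\Fh}^2$ is bounded by the element-boundary traces $\nu h_T^{-1}\|u_h\|^2_{\partial T}$, using shape regularity $h_F\simeq h_T$. For the pressure part, $p_h\in\ILp$ has zero elementwise mean, so $\jmp{\Pi^0p_h}=0$ and $\|p_h\|_{0,h}^2=\|h\nabla p_h\|_\Th^2$. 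I will follow the same convention as the final sentence of \cref{thm:AK0}, absorbing these equivalence constants into $\stablocal{0}$. With that convention, $\stablocal{w}=\stablocal{0}-c_\gamma$ exactly, and \eqref{eq:AKw_coerc} follows. I do not expect a real obstacle. The only point to verify is that the perturbation term is controlled by exactly the $\nu^{\frac12}\|\nabla u_h\|_T$ part of the norm, with constant $1$ times $|w|_{h,d}$; this holds by \eqref{eq:localconvection}, since $C_4$ is already built into $|w|_{h,d}$.
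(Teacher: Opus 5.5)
Your proposal is correct and follows essentially the paper's argument. The paper likewise bounds $\oposeen{w}-\oposeen{0}$ via \cref{lem:localconvection} and treats it as a perturbation of the Stokes prototype from \cref{thm:AK0}, phrasing it as $\|(\oposeen{w}-\oposeen{0})\up_h\|_{\IQ'}\le\gamma\|\oposeen{0}\up_h\|_{\IQ'}$ with $\gamma=c_\gamma/\stablocal{0}<1$ plus a Neumann-series lemma, where you use the reverse triangle inequality and $\dim\IL(T)=\dim\IQ(T)$. The only loose point, the ``absorption'' in your Step~3, disappears if you sum the perturbation bound over $T$ before applying the reverse triangle inequality, as the paper does: since $\sum_{T}\nu\|\nabla u_h\|_T^2\le\dgnorm{\up_h}^2$ with constant one, the global bound of \cref{thm:AK0} yields exactly $\stablocal{0}-c_\gamma$ with no equivalence constant to absorb.
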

\begin{proof}
Using \Cref{lem:localconvection} we get for $T \in \Th$
\[
\|(\oposeenelem{w}-\oposeenelem{0})\up_h\|_{T}
= \| h_T \nu^{-\frac12}\, \opconvelem{w}u_h\|_{T}
\le \nu^{\frac12}\,|w|_{h,d}\,\|\nabla u_h\|_{T} \quad \forall \up_h = (u_h,p_h) \in \IX(T),
\]
and hence
\[
\|(\oposeen{w}-\oposeen{0})\up_h\|_{\IQ'}
\le |w|_{h,d}\,\dgnorm{\up_h} \quad \forall \up_h \in \IX.
\]
Using \cref{thm:AK0} (i.e. $\|\oposeen{0}\up_h\|_{\IQ'}\ge \stablocal{0}\dgnorm{\up_h}$ for all $\up_h \in \IL$) and the resolution assumption, 
we obtain the perturbation bound
\begin{equation*}%
\|(\oposeen{w}-\oposeen{0})\up_h\|_{\IQ'}
\le \frac{|w|_{h,d}}{\stablocal{0}}\|\oposeen{0}\up_h\|_{\IQ'}
\le \gamma\,\|\oposeen{0}\up_h\|_{\IQ'} \quad \forall \up_h \in \IL,
\end{equation*}
with $\gamma := \frac{c_\gamma}{\stablocal{0}} < 1$ by assumption. 
Now, using the Neumann series argument as in \cite[Lemma 3.4]{LLSV_ARXIV_2024} we obtain the result.
\end{proof}

\subsection{Space decomposition}
Building on the local invertibility of $\oposeenelem{w}|_{\IL}$ established in \cref{ssec:local_op} we obtain a global-local space decomposition.
\begin{corollary}\label{cor:local_invertibility}
   The local operator $\oposeen{w}: \IX \to \IQ'$ is bijective on $\IL$, i.e. for all $\rs_h \in \IQ$ there exists a unique $\up_h \in \IL$ such that $\oposeen{w} \up_h = \rs_h$ in $\IQ'$. 
   This implies the space decomposition property 
\begin{equation}\label{eq:splitting}
\IX = \IT_w \oplus \IL,    
\end{equation}
i.e. for all $\up_h \in \IX$ there exist unique $\up_\IT \in \IT_w$ and $\up_\IL \in \IL$ such that $\up_h = \up_\IT + \up_\IL$. 
\end{corollary}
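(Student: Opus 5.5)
The plan is to obtain bijectivity from the coercivity in \cref{cor:AKw} by a dimension count, and then derive the splitting \eqref{eq:splitting} from the definition of $\IT_w$ as the kernel of $\oposeen{w}$ on $\IX$. Everything can be done element by element, since $\oposeen{w}$, $\IL$, $\IQ$ and $\IT_w$ are all defined as products over $T\in\Th$.

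\emph{Step 1: bijectivity of $\oposeen{w}|_{\IL}$.} Under \eqref{ass:A2}, \eqref{eq:AKw_coerc} gives $\|\oposeen{w}\up_h\|_{\IQ'}\ge\stablocal{w}\dgnorm{\up_h}$ with $\stablocal{w}>0$. We need $\dgnorm{\cdot}$ to be a norm on $\IL$, not just a seminorm. If $\dgnorm{\up_h}=0$, then $\nabla u_h=0$ elementwise, so $u_h$ is piecewise constant. \Cref{rem:ILu_poincare} says $\ILu(T)$ contains no nonzero constants, hence $u_h=0$. Likewise $\nabla p_h=0$ forces $p_h$ to be piecewise constant, and the zero-mean condition in $\ILp(T)$ gives $p_h=0$. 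Alternatively, one can use the local bound \eqref{eq:AK0_coerc}, which contains the boundary term. So $\oposeen{w}$ is injective on $\IL$. By construction $\dim\IL(T)=\dim\IQ(T)=\dim\IQ(T)'$, so each local map $\oposeenelem{w}:\IL(T)\to\IQ(T)'$ is bijective, and hence so is the product map $\oposeen{w}:\IL\to\IQ'$.

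\emph{Step 2: the splitting.} Let $\up_h\in\IX$. By Step 1 there is a unique $\up_\IL\in\IL$ with $\oposeen{w}\up_\IL=\oposeen{w}\up_h$ in $\IQ'$; here $\oposeen{w}\up_h$ is well defined because $\IX\subset\IXreg$ (piecewise polynomials). Set $\up_\IT:=\up_h-\up_\IL$. Then $\langle\oposeen{w}\up_\IT,\rs_h\rangle_\Th=0$ for all $\rs_h\in\IQ$, so $\up_\IT\in\IT_w$ by \eqref{eq:Trefftzconditions}. For uniqueness, suppose $\up_\IT+\up_\IL=\up_\IT'+\up_\IL'$. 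Applying $\oposeen{w}$ to both sides annihilates the Trefftz parts, so $\oposeen{w}(\up_\IL-\up_\IL')=0$, and injectivity gives $\up_\IL=\up_\IL'$, hence also $\up_\IT=\up_\IT'$. Equivalently, $\IT_w\cap\IL=\{0\}$ and $\IT_w+\IL=\IX$. As a consistency check, $\dim\IT_w(T)=\dim\IX(T)-\dim\IQ(T)$, because $\oposeenelem{w}:\IX(T)\to\IQ(T)'$ is surjective (already on $\IL(T)$).

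\emph{Main obstacle.} There is essentially none beyond \cref{cor:AKw}. The only point needing care is checking that $\dgnorm{\cdot}$ controls $\up_h\in\IL$ definitely, so that the lower bound really gives injectivity. This is exactly where the absence of constants in $\ILu$ and the zero-mean pressure in $\ILp$ enter.
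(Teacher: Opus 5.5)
Your proposal is correct and follows the paper's route. The paper's proof simply cites \cref{thm:AK0} and \cref{cor:AKw} for the bijectivity of $\oposeen{w}$ on $\IL$ (coercivity plus the dimension match $\dim\IL(T)=\dim\IQ(T)$) and leaves the standard kernel--complement argument for $\IX=\IT_w\oplus\IL$ implicit; you have written that argument out, together with the definiteness of $\dgnorm{\cdot}$ on $\IL$ and the dependence on \eqref{ass:A2}.
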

\begin{proof}
  Follows from \cref{thm:AK0} (Stokes case $w=0$) and \cref{cor:AKw} (Oseen case $w\in\IW$).
\end{proof}
\begin{remark}
  The complement space $\IL$ is defined independently of the convection field $w$.
  This is essential for the analysis of the nonlinear Navier--Stokes problem: the same local complement space is used for every iterate in the Picard iteration.
\end{remark}
\begin{remark}[The inverse $\oposeen{w}^{-1}:\IQ'\to \IL$]
The invertibility of $\oposeen{w}$ on $\IL$ allows to define the (bounded) inverse operator 
$\oposeen{w}^{-1}:\IQ'\to \IL$. Then, the splitting $\up_h = \up_\IT + \up_\IL$ can be 
explicitly characterized with $\up_\IL := \oposeen{w}^{-1} \oposeen{w} \up_h \in \IL$ and $\up_\IT := \up_h - \up_\IL \in \IT$.
\end{remark}

\section{The reduced formulation}\label{sec:reduced_formulation}

This section draws two additional structural consequences.
In \cref{ssec:velocity_decomposition} we project the splitting \eqref{eq:splitting} onto the velocity component, yielding a direct-sum decomposition $[\IP^k(\Th)]^d = \IT_w^u \oplus \ILu$ with a proper velocity Trefftz space $\IT_w^u$.
In \cref{ssec:dgreduced_formulation} we recall the reduced formulation from the DG case, setting up the reduced formulation for the Trefftz-DG formulation, which we present in \cref{ssec:tdgreduced_formulation}.
There, we further restrict both spaces to their discretely divergence-free parts, eliminating the saddle-point structure and producing a reduced space on which the problem is coercive --- a key property exploited in the analyses of \cref{sec:oseenanalysis} and the Navier--Stokes case in \cite{SVLL2_ARXIV_2026}.

\subsection{Decomposition of the velocity space}\label{ssec:velocity_decomposition}

In the following, we will see that the decomposition \eqref{eq:splitting} implies a decomposition on the velocity space. 
We define 
\begin{equation}\tag{$\IT_w^u$}\label{eq:ITwu}
    \IT_w^u := \{ u_h \in [\IP^k]^d \mid \text{there exists } p_h^u \in \IP^{k-1} \text{ such that } (u_h,p_h^u) \in \IT_w\}.
\end{equation}
Let us introduce the $L^2$-projected convection-diffusion operator 
\begin{equation}\tag{$\opcd{w}$}\label{eq:opBrmod}
\opcd{w}: [\IP^{k}]^d \to [\IP^{k-2}]^d, v_h \mapsto -\nu \Delta v_h + \Pi^{k-2}(w \cdot \nabla v_h),
\end{equation}
and the $L^2$ projection into the space $\IP^{k-1}(T)$ via gradients
\begin{equation} \tag{$\Pi_{\nabla,T}$} \label{eq:Pinabla}
\Pi_{\nabla,T}: [L^2(T)]^d \to \IP^{k-1}(T)/\mathbb{R}, \qquad v \mapsto \argmin_{q_h \in \IP^{k-1}(T)/\mathbb{R}} \Vert \nabla q_h - v \Vert_{T},
\end{equation}
with $\Pi_{\nabla}: [L^2(\Th)]^d \to \IP^{k-1}/\IP^0$ defined accordingly s.t. $(\Pi_{\nabla} \cdot)|_T = \Pi_{\nabla,T} \cdot$ for all $T\in\Th$.
To express the dependency between pressures and velocities in the Trefftz space we define the higher-order pressure lifting 
\begin{equation} \label{eq:pressure_lifting} \tag{$\Plift{w}$}
\Plift{w}: [H^2(\Th)]^d \to \IP^{k-1}/\IP^0,\quad v_h \mapsto -
\Pi_\nabla \opcd{w} v_h.
\end{equation}
By definition of $\IT_w^u$ we find that for $u_\IT \in \IT_w^u$ there holds $(\opcd{w} + \nabla \Plift{w}) u_\IT = 0$ and hence $\oposeen{w} (u_\IT,\Plift{w} u_\IT) = 0$  (as $\Plift{w} u_\IT$ exactly maps to a pressure $p_h^u$ as in the definition \eqref{eq:ITwu} ).
With these preparations we can now state the splitting of the velocity space.
\begin{lemma}
    There holds 
    \begin{equation} \label{eq:usplitting}
    [\IP^k]^d = \IT_w^u \oplus \ILu, 
    \end{equation}
    i.e.
    to every $u_h \in [\IP^k]^d$ there exist unique $u_\IT \in \IT_w^u$ and $u_\IL \in \ILu$ such that $u_h = u_\IT + u_\IL$.
    Further, the following characterizations hold
    \begin{subequations} \label{eq:Trefftz_velocity_characterization}
    \begin{align}
        \IT_w^u & = \{ u_h \in [\IP^k]^d \mid (u_h,\Plift{w} u_h) \in \IT_w\} \\ 
        \intertext{and the other way around:}
        \IT_w &= \{  (u_h,\Plift{w} u_h + p_h^0) \text{ for } u_h \in \IT_w^u, p_h^0 \in \IP^0\} 
        = (\id,\Plift{w}) \IT_w^u \oplus \{0\} \times \IP^0.
        \label{eq:trefftz_sp_split}
    \end{align}
\end{subequations}
 
\end{lemma}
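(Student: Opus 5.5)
The plan is to prove the characterizations first and then obtain the velocity splitting by projecting the decomposition \eqref{eq:splitting} of \cref{cor:local_invertibility} onto the velocity component.

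\emph{Characterizations.} Take $u_h\in\IT_w^u$ and a pressure $p_h^u$ with $(u_h,p_h^u)\in\IT_w$. On each element, the Trefftz condition tested with $(r_h,0)$ gives
\[
\opcd{w}u_h+\Pi^{k-2}\nabla p_h^u=0 .
\]
Since $\nabla\IP^{k-1}(T)\subset[\IP^{k-2}(T)]^d$, this reads $\opcd{w}u_h=-\nabla p_h^u$ exactly. Hence $\opcd{w}u_h\in\nabla\IP^{k-1}(T)$, and $\Pi_{\nabla,T}$ recovers $p_h^u$ up to an elementwise constant. So $\nabla\Plift{w}u_h=\nabla p_h^u$, that is, $p_h^u-\Plift{w}u_h\in\IP^0$.

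The divergence constraint tested with $(0,s_h)$ involves only $u_h$, so it is unaffected by the choice of pressure. Therefore $(u_h,\Plift{w}u_h)\in\IT_w$, which gives the first characterization; the reverse inclusion is trivial. Conversely, every element of $\IT_w$ has the form $(u_h,\Plift{w}u_h+p_h^0)$. Adding elementwise constants to the pressure does not change $\oposeen{w}$, since only $\nabla p$ enters. This gives \eqref{eq:trefftz_sp_split}. The sum is direct because $(\id,\Plift{w})u_h$ has zero pressure means, while $\{0\}\times\IP^0$ has zero velocity.

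\emph{Existence of the splitting.} Given $u_h\in[\IP^k]^d$, pick any pressure, say $0$, and apply \eqref{eq:splitting} to $(u_h,0)$. This yields $(u_h,0)=\up_\IT+\up_\IL$. Taking velocity components gives $u_h=u_\IT+u_\IL$ with $u_\IT\in\IT_w^u$ by definition, and $u_\IL\in\ILu$ since $\IL=\ILu\times\ILp$.

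\emph{Uniqueness.} This is the step needing care, because the pressure complement $\ILp$ consists of zero-mean polynomials while $\IT_w$ carries the elementwise constants. Suppose $u_h\in\IT_w^u\cap\ILu$. By the characterization, $(u_h,\Plift{w}u_h)\in\IT_w$. Let $q:=\Plift{w}u_h-\Pi^0\Plift{w}u_h\in\ILp$; if $\Plift{w}$ is normalized to zero mean, the correction is simply zero. Then $(u_h,q)$ still lies in $\IT_w$, since constants are irrelevant, and it also lies in $\IL$. By \eqref{eq:splitting}, $\IT_w\cap\IL=\{0\}$, so $(u_h,q)=0$, in particular $u_h=0$.

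Equivalently, one can argue directly: $\oposeen{w}(u_h,q)=0$ together with injectivity of $\oposeen{w}$ on $\IL$ from \cref{cor:AKw} (or \cref{thm:AK0} for $w=0$) forces $u_h=0$.
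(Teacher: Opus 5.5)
Your proof is correct and follows essentially the paper's route. Both arguments project the splitting $\IX=\IT_w\oplus\IL$ onto the velocity component and use the zero-mean pressure lifting $\Plift{w}$, with elementwise constants absorbed into $\IT_w$, to pass between velocity decompositions and velocity--pressure decompositions. The paper phrases uniqueness as independence of the velocity splitting from $p_h$, while you phrase it as $\IT_w^u\cap\ILu=\{0\}$; your explicit check that $\nabla\Plift{w}u_h=\nabla p_h^u$ fills in the step the paper calls immediate.
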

\begin{proof}
For $\up_h \in \IX$ one easily checks that the following explicit characterization of the splitting $\up_h = \up_\IT + \up_\IL$ holds:
\vspace*{-0.65cm}
$$
\overbrace{(u_h,p_h)}^{\up_h \in \IX} = \overbrace{(u_\IT,\Pi^0 p_h + \Plift{w} u_\IT)}^{\up_\IT \in \IT} + \overbrace{(u_\IL, p_h - \Pi^0 p_h - \Plift{w} u_\IT)}^{\up_\IL \in \IL}.
$$
We observe that the decomposition of $u_h$ into $u_\IT$ and $u_\IL$ is independent of the choice of $p_h$ and hence unique (as $\up_\IT$ and $\up_\IL$ are unique). The characterizations \eqref{eq:Trefftz_velocity_characterization} follow immediately from the definition of $\Plift{w}$ and the above explicit characterization of the splitting which concludes the proof.
\end{proof}

\begin{figure}[!ht]
  \vspace*{-0.5cm}
    \centering
    \newcolumntype{C}{ >{\centering\arraybackslash} m{0.45\textwidth} }
    \newcolumntype{D}{ >{\centering\arraybackslash} m{0.4\textwidth} }
    \begin{tabular}{CcD}
        \includegraphics[width=0.45\textwidth,page=1]{matrix_schematic.pdf}
        &
        $\leadsto$ 
        &
        \includegraphics[width=0.4\textwidth,page=2]{matrix_schematic.pdf} 
    \end{tabular}
    \vspace*{-0.45cm}
    \caption{Schematic of obtaining the reduced problem for the standard DG method. }
    \label{fig:detailed_structure:DG}
\end{figure}

\subsection{Reduced DG formulation}\label{ssec:dgreduced_formulation}
For the analysis of the nonlinear Navier--Stokes problem, it will be crucial to consider the reduced problem, i.e. the problem where a \emph{discrete} divergence-constraint is implemented into the definition of a subspace of $\IX(\mathcal{T}_h)$, cf. \cite{CKS05}. For the standard DG formulation, e.g. \eqref{eq:DG}, this would be the kernel of the discrete divergence operator $\blfdiv(\cdot,\cdot)$ resulting in the reduced DG formulation (for the Oseen problem) only for the velocity (on a subspace of $[\IP^k]^d$):
Find $\rmod{u_h} \in \{ v_h \in [\IP^k]^d \mid \, \blfdiv(v_h,q_h) = 0 \, \forall q_h \in \IP^{k-1} \}$, s.t. 
$$
 \blfvis(\rmod{u_h},\rmod{v_h}) + \blfconv{w}(\rmod{u_h}, \rmod{v_h}) = (f,\rmod{v_h})_{\Th} \qquad \forall \rmod{v_h} \in \{ v_h \in [\IP^k]^d \mid \, \blfdiv(v_h,q_h) = 0 \, \forall q_h \in \IP^{k-1} \}.
$$
See \cref{fig:detailed_structure:DG} for a schematic of the algebraic structure. 
Similarly to the above notation, we will use an underline $\rmod{\cdot}$
to indicate reduced spaces in the following.

\subsection{Reduced Trefftz-DG formulation} \label{ssec:tdgreduced_formulation}
Formulating the reduced problem for the Trefftz-DG method is more subtle, as the discrete divergence constraint is part of the \emph{Trefftz problem} as well as the \emph{local problems}. 

\paragraph{Reduction of the local velocity space.}
We start with the local problem. The divergence constraint is simply the second component of the local operator $\oposeenelem{w}$, i.e. $\Div u_\IL = 0$ for $u_\IL \in \ILu$. Hence, the reduced local velocity space is simply 
\begin{align} \tag{$\LR$}\label{eq:LR}
    \LR(T) &:= \{ u_\IL^{r} \in \ILu(T) : \Div u_\IL^r = 0\}, && \!\!\!\!\!\!\LR = \LR(\Th) \!= \!\! \prod_{T \in \Th} \!\!\LR(T).
    \intertext{
with 
$\dim(\LR) = \dim(\ILu) - \dim(\IP^{k-1})$ as $\Div|_\Th: \ILu \to (\IP^{k-1})'$ is surjective (implied by \cref{cor:AKw}). 
A corresponding reduced local test space acting only on velocities and which renders (pressure) gradient fields $\nabla \IP^{k-1}(T)$ invisible to the local operator is given by the $L^2$ orthogonal complement to  $\nabla \IP^{k-1}(T)$ in $[\IP^{k-2}]^d$. 
We define the reduced local test space as
    }
\tag{$\QR$}\label{eq:QR}
\QR(T) &:= \ker \Pi_{\nabla,T}
\ && \!\!\!\!\!\!\QR = \QR(\Th) \!= \!\!\prod_{T \in \Th} \!\!\QR(T),
\end{align}
so that $\langle r_h, \nabla q_h \rangle_{\Th} = 0$ for all $r_h \in \QR$, $q_h \in \IP^{k-1}$.
The norm is given by the velocity part of the $\IQ$-norm, i.e. for $r_h \in \QR$ we set
\begin{equation*}%
    \|r_h\|_{\QR} := \|r_h\|_{\Th}
\end{equation*}
and the dual norm $\|\cdot\|_{\QR'}$ is defined accordingly.
The dimension of the reduced test space matches the reduced local velocity space, i.e. there holds $\dim(\QR) = \dim([\IP^{k-2}]^d) - \dim(\nabla \IP^{k-1}) = \dim(\LR)$.
We define the reduced local operator $\oposeenrelem{w}: [H^2(T)]^d \to \QR(T)'$ as %
\begin{equation} \tag{$\oposeenrelem{w}$}\label{eq:ArKw}
\langle \oposeenrelem{w} \rmod{u}_\IL, r_h \rangle_T := \langle h \nu^{-\frac12} (-\nu \Delta \rmod{u}_\IL + w \cdot \nabla \rmod{u}_\IL), r_h \rangle_T 
\qquad \forall \rmod{u}_\IL \in [H^2(T)]^d, r_h \in \QR(T).
\end{equation}
Correspondingly, we define $\oposeenr{w}: [H^2(\Th)]^d \to \QR'$. Note that the difference between $\opcd{w}$ and $\oposeenr{w}$ is in the treatment of arising gradient fields and the scaling. 

\begin{corollary}[Continuity and stability of $\oposeenr{w}$ on $\LR$]\label{cor:AKrw}
    Assume that \eqref{ass:wh} holds with the resolution assumption \eqref{ass:A2}. 
    Then $\oposeenr{w}:\LR \to \QR'$ is invertible.
    With $\stablocal{w}$, $\contlocal{w}$ as in \cref{lem:AKw_continuity,cor:AKw}, for all $\rmod{u}_\IL\in \LR$ and $\ur \in [H^2(\Th)]^d$ there holds
    \begin{align}
        \stablocal{w} \, \nu^{\frac12} \vdgnorm{\rmod{u}_\IL} \le \|\oposeenr{w} \rmod{u_\IL} \|_{\QR'}, \qquad \|\oposeenr{w} \rmod{u} \|_{\QR'} &\le \contlocal{w}\, \nu \, \vdgsnorm{\ur}. \label{eq:AKrw_cont_coer}
    \end{align}
\end{corollary}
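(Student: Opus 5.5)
The plan is to derive both estimates in \eqref{eq:AKrw_cont_coer} from the unreduced results, \cref{lem:localconvection} and \cref{cor:AKw}. The mechanism is to pair reduced test functions with suitable full test functions and to pad reduced trial functions with a zero-mean pressure. Invertibility then follows from injectivity, because $\dim\LR=\dim\QR$ as noted above.

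\emph{Continuity.} For $\ur\in[H^2(\Th)]^d$ and $r_h\in\QR$, the pair $(r_h,0)$ lies in $\IQ$ with $\|(r_h,0)\|_{\IQ}=\|r_h\|_{\Th}$. Moreover, for any pressure $p$ we have $\langle\oposeenrelem{w}\ur,r_h\rangle_T=\langle\oposeenelem{w}(\ur,p),(r_h,0)\rangle_T$, since $(h\nu^{-\frac12}\nabla p,r_h)_T=0$ whenever $p\in\IP^{k-1}$; this uses $r_h\perp\nabla\IP^{k-1}$, which holds because $h$ is constant on $T$. Choosing $p=0$ gives $\|\oposeenr{w}\ur\|_{\QR'}\le\|\oposeen{w}(\ur,0)\|_{\IQ'}\le\contlocal{w}\dgsnorm{(\ur,0)}=\contlocal{w}\nu^{\frac12}\vdgsnorm{\ur}$ by \eqref{eq:AKw_continuity}. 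This is the claimed bound up to the power of $\nu$; I read the stated factor $\nu$ as a normalization typo, and if it is intended literally one would carry the $\nu$ scaling through the norms.

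\emph{Stability.} Fix $\rmod{u}_\IL\in\LR$ and let $p_h\in\ILp$ be the unique zero-mean element on each $T$ with $\nabla p_h=-\Pi_{\nabla}\big(h\nu^{-\frac12}\text{-weighted residual}\big)$. Concretely, $p_h:=\Plift{w}\rmod{u}_\IL$ minus its mean, so that $\opcd{w}\rmod{u}_\IL+\nabla p_h\in\QR$. Then $\up_h:=(\rmod{u}_\IL,p_h)\in\IL$. Its divergence component in $\oposeen{w}\up_h$ vanishes, and the velocity residual $h\nu^{-\frac12}(\opcd{w}\rmod{u}_\IL+\nabla p_h)$ lies in $\QR$. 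The $\IQ$-representative of $\oposeen{w}\up_h$ is therefore $(g,0)$ with $g\in\QR$, so
\[
\|\oposeen{w}\up_h\|_{\IQ'}=\|g\|_{\Th}=\sup_{r_h\in\QR}\frac{(g,r_h)_\Th}{\|r_h\|_\Th}=\|\oposeenr{w}\rmod{u}_\IL\|_{\QR'},
\]
where the last equality uses that pairing with $r_h\in\QR$ annihilates $\nabla p_h$. Applying \eqref{eq:AKw_coerc} then gives $\|\oposeenr{w}\rmod{u}_\IL\|_{\QR'}\ge\stablocal{w}\dgnorm{\up_h}\ge\stablocal{w}\nu^{\frac12}\vdgnorm{\rmod{u}_\IL}$, after dropping the pressure term. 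Injectivity follows, and bijectivity follows from the dimension count.

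\emph{Main obstacle.} The delicate step is verifying that the residual can be made to lie exactly in $\QR$ by a pressure from $\ILp$. This requires that $[\IP^{k-2}]^d=\QR\oplus\nabla\IP^{k-1}$ orthogonally on each element, and that $\opcd{w}\rmod{u}_\IL\in[\IP^{k-2}]^d$. The first holds because $\nabla\IP^{k-1}\subset[\IP^{k-2}]^d$. The second holds because $\Delta$ lowers the degree by two and the convection term is projected onto $[\IP^{k-2}]^d$. The remaining points to check are that $\Pi_{\nabla,T}$ with its $h$ scaling is element-constant, so it commutes with the weighting, and that the identity $\|\cdot\|_{\IQ'}=\|\Pi_\IQ\cdot\|_\IQ$ is applied with the correct representative.
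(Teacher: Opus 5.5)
Your proof is correct and follows the paper's argument: continuity comes from testing $\oposeen{w}(\ur,0)$ with $(r_h,0)$, and stability comes from padding $\rmod{u}_\IL$ with a zero-mean pressure in $\ILp$, so that $\oposeen{w}$ of the pair is represented by $(g,0)$ with $g\in\QR$, then applying \eqref{eq:AKw_coerc} and dropping the pressure term. Your explicit choice of the zero-mean representative of $\Plift{w}\rmod{u}_\IL$ just makes concrete the pressure that the paper obtains from invertibility of $\oposeen{w}$ on $\IL$ (only your preliminary phrase with the weight $h\nu^{-\frac12}$ inside $\Pi_\nabla$ is off; the concrete definition is right), and your reading of the factor $\nu$ as $\nu^{\frac12}$ is also correct, since the paper's proof likewise yields $\contlocal{w}\dgsnorm{(\ur,0)}=\contlocal{w}\nu^{\frac12}\vdgsnorm{\ur}$, which is how the bound is used later.
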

\begin{proof}
    By stable invertibility of $\oposeen{w}$, cf.\ \cref{cor:AKw}, there is unique $\up_\IL = (u_\IL,p_\IL) \in \IL$ so that 
    $\oposeen{w} \up_\IL = (\oposeenr{w} \rmod{u}_\IL,0)$. 
    The corresponding full local solution has velocity component $u_\IL=\rmod{u}_\IL$,
    while the pressure component is the unique $p_\IL\in\ILp$ such that
    \[
    \oposeen{w}(u_\IL,p_\IL)=(\oposeenr{w}\rmod{u}_\IL,0)\quad\text{in }\IQ'.
    \]
    and hence
    \begin{equation*}
        \|\oposeenr{w} \rmod{u}_\IL \|_{\QR'}
          = \|\oposeen{w} \up_\IL \|_{\IQ'}
          \ge \stablocal{w} \Vert \up_\IL \Vert_{\IX} \ge 
          \stablocal{w} \nu^{\frac12} \vdgnorm{u_\IL} = 
          \stablocal{w} \nu^{\frac12} \vdgnorm{\ur_\IL}.
    \end{equation*}
    Similarly, we find 
    \[
        \|\oposeenr{w} \ur \|_{\QR'} \le \|\oposeen{w}(\ur,0)\|_{\IQ'},
    \]
    which concludes the proof after applying the continuity estimate \eqref{eq:AKw_continuity} to $\up = (\ur,0)$.
\end{proof}    

\paragraph{Reduction of the Trefftz velocity space}
We now carry out the analogous reduction for the Trefftz space.
By \eqref{eq:trefftz_sp_split} from \cref{ssec:velocity_decomposition}, the Trefftz space decomposes as
$$
\IT_w = (\id,\Plift{w}) \IT_w^u \oplus \{0\} \times \IP^0.
$$
The form $\blfoseen{w}$ is coercive on the first component $(\id,\Plift{w})\IT_w^u$ \cite{LLS_NM_2024}, while it vanishes identically on the second component $\{0\}\times\IP^0$, since no coupling between pressure-only test and trial functions arises. The two components interact solely through $\blfdiv$, so the global problem has the standard saddle-point structure in which $\{0\}\times\IP^0$ serves as the Lagrange multiplier space enforcing the discrete divergence constraint; cf.\ \cref{fig:detailed_structure:TDG}.

\begin{lemma}[LBB stability]\label{lem:LBB_P0}
Assume \eqref{ass:wh} and \eqref{ass:A2}.
There exists a constant $\alpha_0>0$, depending only on $\Omega$, $k$, and the shape-regularity of $\Th$, such that
\begin{equation}\label{eq:lbb_P0}
    \sup_{u_\IT \in \IT_w^u \setminus\{0\}} \frac{\blfdiv(u_\IT,\,p_h^0)}{\|u_\IT\|_{1,h}}
    \;\ge\; \alpha_0\,\|p_h^0\|_{0,h}
    \qquad \forall\, p_h^0 \in \IP^0.
\end{equation}
\end{lemma}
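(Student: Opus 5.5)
Our approach is to start from the standard discrete inf--sup condition for piecewise constant pressures and move it into the Trefftz velocity space using the splitting \eqref{eq:usplitting}. Fix $p_h^0 \in \IP^0$; since $\nabla p_h^0=0$ elementwise, $\|p_h^0\|_{0,h}$ reduces to the jump term $\|h^{\frac12}\jmp{p_h^0}\|_{\Fhi}$. The only part of $\blfdiv(v_h,p_h^0)$ that survives is $-(\Div v_h,p_h^0)_\Th+(\jmp{v_h\cdot n},\avg{p_h^0})_\Fh$, and elementwise integration by parts rewrites this as $-\sum_{F\in\Fhi}(\avg{v_h\cdot n},\jmp{p_h^0})_F$, up to sign conventions. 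Hence $\blfdiv(v_h,p_h^0)$ depends only on the facet means $\Pi_F^0(\avg{v_h}\cdot n)$. We then take the classical lowest-order construction (Crouzeix--Raviart or BDM$_1$-type, e.g.\ via \cite{CKS05}) and obtain $v_h\in[\IP^1]^d\subset[\IP^k]^d$ satisfying
\[
\blfdiv(v_h,p_h^0)\ge c\,\|h^{\frac12}\jmp{p_h^0}\|_{\Fhi}^2 \quad\text{and}\quad \|v_h\|_{1,h}\le C\|h^{\frac12}\jmp{p_h^0}\|_{\Fhi}.
\]
Concretely, we prescribe the interior facet flux means to be $-h_F\jmp{p_h^0}$ and the boundary fluxes to be zero. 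The jump seminorm is a norm on $\IP^0/\IR$ because the domain is connected; the paper's statement asks only for this scaling, with $\alpha_0$ allowed to depend on $\Omega$.

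Next we split $v_h=u_\IT+u_\IL$ with $u_\IT\in\IT_w^u$ and $u_\IL\in\ILu$ as in \eqref{eq:usplitting}, and use $u_\IT$ as the test function. Two facts are needed. First, $\blfdiv(u_\IL,p_h^0)=0$, so that $\blfdiv(u_\IT,p_h^0)=\blfdiv(v_h,p_h^0)$. Second, $\|u_\IT\|_{1,h}\le C\|v_h\|_{1,h}$.

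For the first fact we compute $u_\IL$ through the local inverse. By the proof of the splitting lemma, $\up_\IL=\oposeen{w}^{-1}\oposeen{w}(v_h,0)$. The divergence component of $\oposeen{w}(v_h,0)$ is $-\nu^{\frac12}\Pi^{k-1}\Div v_h$. Choose $v_h$ to be elementwise divergence-free, i.e.\ a BDM/CR-type field whose flux data are compatible; a fallback is described below. Then $\Div u_\IL=0$ on every element, and \cref{eq:divrelation} yields $\Pi_F^0(u_\IL\cdot n)=0$ on every facet, so $\blfdiv(u_\IL,p_h^0)=0$ by the facet-mean representation above. The complication is that an elementwise divergence-free field with arbitrary facet fluxes cannot exist, since the net flux of each element must vanish. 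The fallback is therefore to keep $v_h$ as the lowest-order field and use only $\int_T\Div u_\IL=\int_T\Div v_h$. The divergence row of the local problem gives $\Pi^{k-1}\Div u_\IL=\Pi^{k-1}\Div v_h$. Then \eqref{eq:flux_divergence_relation} expresses the single nonzero flux of $u_\IL$, on facet $F_{T,d+1}$, through $\int_T\Div v_h$. One must then check that the resulting facet contributions, including the averaging across neighbouring elements, do not destroy the coercive bound. This compatibility between the complement space and the $\IP^0$ pressures is where we expect the main difficulty, and the first thing to try is to modify the choice of $v_h$ so that every element's net flux vanishes in the relevant averaged sense.

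For the second fact we use the bounded local inverse. \cref{cor:AKw} and \cref{lem:localconvection} give $\nu^{\frac12}\|u_\IL\|_{1,h}\le\dgnorm{\up_\IL}\le\stablocal{w}^{-1}\contlocal{w}\dgsnorm{(v_h,0)}$. On polynomials the $\ast$-norm and the plain norm are equivalent, so $\|u_\IL\|_{1,h}\le C\|v_h\|_{1,h}$ with $C$ depending on $\stablocal{0}-c_\gamma$ and $c_\gamma$, and the $\nu$ factors cancel. The triangle inequality then gives $\|u_\IT\|_{1,h}\le(1+C)\|v_h\|_{1,h}$. Combining with the first step, $\blfdiv(u_\IT,p_h^0)/\|u_\IT\|_{1,h}\ge c/(C(1+C))\,\|p_h^0\|_{0,h}$, which proves \eqref{eq:lbb_P0}.
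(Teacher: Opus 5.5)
\textbf{There is a genuine gap in your proposal.} Your main route is exactly the paper's proof: take an elementwise solenoidal lowest-order $v_h$, split it by \eqref{eq:usplitting}, use \cref{eq:divrelation} to kill $\blfdiv(u_\IL,p_h^0)$, and bound $u_\IL$ through the local inverse. Your second step is the paper's estimate essentially verbatim. But you never produce the ingredient everything rests on: a field $v_h\in[\IP^1]^d$ with $\Div v_h|_T=0$ on every $T$, $\blfdiv(v_h,p_h^0)\gtrsim\|p_h^0\|_{0,h}^2$ and $\|v_h\|_{1,h}\lesssim\|p_h^0\|_{0,h}$.

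The paper does not construct this field either; it imports it from \cite[Lemma~7]{LLS_NM_2024}. After that, $\Div u_\IT=0$ (the divergence row of the Trefftz condition) forces $\Div u_\IL=0$, and the rest goes as you describe.

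Your reason for abandoning this route is not the right one. The identity $\blfdiv(v,p_h^0)=-\sum_{F\in\Fhi}(\avg{v\cdot n},\jmp{p_h^0})_F$ shows that $\blfdiv$ sees only facet \emph{averages}. The zero-net-flux constraint restricts only the one-sided fluxes of each element, and piecewise constant vectors already realize every zero-sum vector of one-sided fluxes on a simplex. So on a fixed mesh the averages can in fact be prescribed; existence is not the issue.

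Your fallback does not close. For non-solenoidal $v_h$ (e.g.\ Crouzeix--Raviart) one has $\int_T\Div u_\IL=\int_T\Div v_h\neq 0$. Hence $u_\IL$ carries a nonzero flux through $F_{T,d+1}$, and $\blfdiv(u_\IL,p_h^0)$ is of the same order as the main term, with nothing in your argument controlling it.

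\textbf{The real obstruction is uniformity in $h$.} You were right that this step is the crux. For elementwise solenoidal $v$ one has
\[
\blfdiv(v,p_h^0)=\sum_{T\in\Th}\ \sum_{F\subset\partial T,\ F\in\Fhi}\tfrac12\,(p_{T_F}-p_T)\int_F v|_T\cdot n_T ,
\]
where $T_F$ is the neighbour of $T$ across $F$. Take $\Omega=(0,1)^2$ with a uniform $N\times N$ grid whose squares are all cut along parallel diagonals. Its dual graph is bipartite, so with $p_h^0=\pm1$ on the two classes of triangles, every element sees the value $-p_T$ on all its interior neighbours.

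Zero net flux per element then reduces the formula to $\blfdiv(v,p_h^0)=\int_{\partial\Omega}p_h^0\,v\cdot n$. Hence $|\blfdiv(v,p_h^0)|\le|\partial\Omega|^{1/2}h^{1/2}\|v\|_{1,h}$, whereas $\|p_h^0\|_{0,h}^2=4\sum_{F\in\Fhi}h_F|F|\simeq 1$. Since every element of $\IT_w^u$ is elementwise solenoidal, the supremum in \eqref{eq:lbb_P0} is $O(h^{1/2})\,\|p_h^0\|_{0,h}$ on this shape-regular family.

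So neither your fallback nor the imported $\IP^1$ field can give an $h$-independent $\alpha_0$ from shape regularity alone. A complete argument needs additional mesh assumptions, or must accept a mesh-dependent constant.
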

\begin{proof}
Following \cite[Lemma 7]{LLS_NM_2024}, to $p_h^0 \in \IP^0$ there is $u_h^p \in [\IP^1]^d \cap \{ \Div u_h^p|_\Th = 0 \}$ s.t. $\blfdiv(u_h^p,p_h^0) = \Vert p_h^0\Vert_{0,h}^2$ with $\Vert u_h^p \Vert_{1,h} \leq C \Vert p_h^0\Vert_{0,h}$. %
    As $u_h^p \in [\IP^{k}]^d$, we can uniquely decompose it into $u_h^p = u_\IT + u_\IL$ with $u_\IT \in \IT_w^u$ and $u_\IL \in \IL^u$. As further $\Div u_\IT|_{\Th} = 0$ we also have $\Div u_\IL|_{\Th} = 0$ and with \cref{eq:divrelation} we have $\blfdiv(u_\IL,p_h^0) = (\jmp{u_\IL \cdot n},\avg{p_h^0})_{\Fh} = 0$ and hence $\blfdiv(u_\IT,p_h^0) = \blfdiv(u_h^p,p_h^0) - \blfdiv(u_\IL,p_h^0) = \Vert p_h^0 \Vert_{0,h}^2$.
    Further, we have $\oposeenr{w} u_\IT = 0$ and hence $\oposeenr{w} u_\IL = \oposeenr{w} u_h^p$ which yields $\Vert u_\IL \Vert_{1,h} \leq \Vert \oposeenr{w}^{-1} \Vert \Vert \oposeenr{w} \Vert  \Vert u_h^p \Vert_{1,h}$ and hence
    $$
\Vert u_\IT \Vert_{1,h}
\leq 
\Vert u_h^p \Vert_{1,h} + 
\Vert u_\IL \Vert_{1,h}
\leq (1 + \frac{\contlocal{w}}{\stablocal{w}}) C 
\Vert p_h^0 \Vert_{0,h}. \vspace*{-4ex}
    $$
\end{proof}
According to this structure, we set the notation of 
\emph{discretely divergence-free} functions as those functions $u_h \in [\IP^k]^d$ that are element-wise divergence-free, i.e. $\Div u_h|_T = 0$ for all $T \in \Th$, and satisfy the discrete divergence constraint against $\IP^0$ pressure test functions, i.e. $\blfdiv(u_h, q_h^0) = 0$ for all $q_h^0 \in \IP^0$: 
\begin{equation}\tag{$\XR$}\label{eq:XR}
\XR := \{ u_h \in [\IP^k]^d \mid\, \blfdiv(u_h,q_h^0) = 0 ~ \forall q_h^0 \in \IP^0 \text{ and } \Div|_{\Th} u_h =0 \}.
\end{equation}
In analogy to $\IXreg$ we introduce the notation $\XRreg := [H^2(\Th)]^d$.
The subspace of $\XR$ of \emph{discretely divergence-free} velocity Trefftz functions is then
\begin{equation}\tag{$\TR_w$}\label{eq:TR}
\TR_w = \{ \rmod{u_\IT} \in \IT_w^u : \blfdiv(\rmod{u_\IT}, q_h^0) = 0 ~\forall q_h^0 \in 
\IP^0\} \subset \IT_w^u.
\end{equation}
We obviously have $\LR \subseteq \XR$ and $\TR_w \subseteq \XR$.
Dimension counting shows that
\begin{align*}
& \dim(\TR_w) + \dim(\LR)  = \dim(\IT_w^u) - \dim(\IP^0) + \dim(\ILu) - \dim(\IP^{k-1}) \\
& = \dim([\IP^k]^d) - \dim([\IP^{k-2}]^d) - 2 \dim(\IP^0) + 
\dim([\IP^{k-2}]^d) + \dim(\IP^0) - \dim(\IP^{k-1})
= \dim(\XR),
\end{align*}
and hence we also have the space decomposition of the discretely divergence-free velocities into Trefftz velocities in $\TR_w$ and complement $\LR$:
\begin{equation}\label{eq:XRdecomp}
\TR_w \oplus \LR = \XR.    
\end{equation}

\begin{figure}[htbp!]
 \centering
\vspace*{-0.325cm}
 \newcolumntype{C}{ >{\centering\arraybackslash} m{0.49\textwidth} }
\newcolumntype{D}{ >{\centering\arraybackslash} m{0.4\textwidth} }
\begin{tabular}{CcD}
 \includegraphics[width=0.49\textwidth,page=4]{matrix_schematic.pdf}
 &
 $\leadsto$ 
 &
 \includegraphics[width=0.4\textwidth,page=5]{matrix_schematic.pdf} 
\end{tabular}
\vspace*{-0.325cm}
\caption{Algebraic structure before and after reformulation as a reduced problem on $\XR$.}
 \label{fig:detailed_structure:TDG}
 \end{figure}

Next, we want to use this reduction to formulate a Trefftz-DG method on $\XR$ that yields an equivalent velocity solution to the formulation \eqref{eq:tdgoseen}.

Find $\ur_h = \ur_\IT + \ur_\IL \in \XR$ such that
for all $\zr_h = (\vr_\IT,\rr_h) \in \ZR := \TR_w \times \QR$ there holds
\begin{align}\label{eq:redtdgoseen}
    \blftrefftzr{w}(\ur_h,\zr_h) & =\rmod{\ell}_h(\vr_\IT,\rr_h),
    \text{ with } \blftrefftzr{w}(\ur_h,\zr_h) := \blfoseenr{w}(\ur_h,\vr_\IT) + \langle \oposeenr{w} \ur_h, \rr_h \rangle_{\Th},~~ %
    \tag{$\blftrefftzr{w}$}
    \\
    \text{ where }
\blfoseenr{w}(\ur,\vr) & := \blfoseen{w}((\ur,\Plift{w} \ur),(\vr, \Plift{w} \vr))
    \tag{$\blfoseenr{w}$} \\
    \text{ and }
\rmod{\ell}_h(\vr,\rr)
&:=
(f,\vr)_\Th-\blfdiv(\vr,\Pi_\nabla f)+(f,h\nu^{-\frac12}\rr)_\Th.
\tag{$\rmod{\ell}_h$}
\label{eq:rhs_functional}
\end{align}

\begin{theorem}[Equivalence of \eqref{eq:tdgoseen} and \eqref{eq:redtdgoseen}]\label{cor:uniquesol}
Under the assumptions of \cref{cor:AKw}, the two formulations are equivalent:
any solution $(u_h, p_h) \in \IX$ of \eqref{eq:tdgoseen} has $u_h \in \XR$ satisfying
\eqref{eq:redtdgoseen}, and conversely, any solution $\ur_h \in \XR$ of
\eqref{eq:redtdgoseen} is the velocity component of a solution of \eqref{eq:tdgoseen}.
\end{theorem}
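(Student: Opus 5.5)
The plan is to translate each equation of \eqref{eq:tdgoseen} into the reduced setting through the structural facts already established: the splitting \eqref{eq:trefftz_sp_split} of $\IT_w$, the orthogonal decomposition $[\IP^{k-2}(T)]^d = \QR(T)\oplus \nabla\IP^{k-1}(T)$, and the decomposition \eqref{eq:XRdecomp}.

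\emph{Forward direction.} Let $(u_h,p_h)\in\IX$ solve \eqref{eq:tdgoseen}.
\begin{enumerate}
\item Testing \eqref{eq:tdgoseen:2} with $(0,s_h)$, $s_h\in\IP^{k-1}(T)$, gives $\Div u_h|_T=0$ on every element. Testing \eqref{eq:tdgoseen:1} with $(0,q_h^0)\in\IT_w$ gives $\blfdiv(u_h,q_h^0)=0$. Hence $u_h\in\XR$.
\item Testing \eqref{eq:tdgoseen:2} with $(r_h,0)$ and splitting $r_h=\rr_h+\nabla q_h$, with $\rr_h\in\QR$ and $q_h\in\IP^{k-1}$, gives two statements:
\begin{itemize}
\item the $\rr_h$-part is exactly the local reduced equation $\langle\oposeenr{w}u_h,\rr_h\rangle=(f,h\nu^{-\frac12}\rr_h)$, since $\nabla p_h\perp\QR$;
\item the gradient part says $\nabla p_h = \nabla\Pi_\nabla(f-\opcd{w}u_h)$. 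Thus $p_h=\Plift{w}u_h+\Pi_\nabla f+p_h^0$ with $p_h^0\in\IP^0$.
\end{itemize}
\item Test \eqref{eq:tdgoseen:1} with $(\vr,\Plift{w}\vr)\in\IT_w$, where $\vr\in\TR_w$, and insert the pressure representation from step 2. The term $\blfdiv(\vr,p_h^0)$ vanishes by the definition of $\TR_w$, and $\blfdiv(u_h,\Plift{w}\vr)$ is already contained in $\blfoseenr{w}$. The remaining term $\blfdiv(\vr,\Pi_\nabla f)$ is moved to the right-hand side. This gives $\blfoseenr{w}(u_h,\vr)=(f,\vr)-\blfdiv(\vr,\Pi_\nabla f)$.
\item Adding this to the local reduced equation yields \eqref{eq:redtdgoseen}.
\end{enumerate}

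\emph{Converse direction.} Given a solution $\ur_h\in\XR$ of \eqref{eq:redtdgoseen}, define $p_h:=\Plift{w}\ur_h+\Pi_\nabla f+p_h^0$, where $p_h^0\in\IP^0$ is still to be determined.
\begin{enumerate}
\item Equation \eqref{eq:tdgoseen:2} holds by reversing step 2 above:
\begin{itemize}
\item the $s_h$-component holds because $\Div\ur_h|_\Th=0$;
\item the $\QR$-part holds by the reduced local equation (take $\vr_\IT=0$ in \eqref{eq:redtdgoseen});
\item the gradient part holds by the choice of $p_h$.
\end{itemize}
\item For \eqref{eq:tdgoseen:1}, use \eqref{eq:trefftz_sp_split}: every test function in $\IT_w$ has the form $(v,\Plift{w}v)+(0,q^0)$ with $v\in\IT_w^u$. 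Testing with $(0,q^0)$ gives $\blfdiv(\ur_h,q^0)=0$, which holds since $\ur_h\in\XR$.
\item For general $v\in\IT_w^u$, \eqref{eq:tdgoseen:1} reads
\[
G(v):=\blfoseenr{w}(\ur_h,v)+\blfdiv(v,\Pi_\nabla f)-(f,v)_\Th=-\blfdiv(v,p_h^0).
\]
By \eqref{eq:redtdgoseen} with $\rr_h=0$, the functional $G$ vanishes on $\TR_w$, which is the kernel of $v\mapsto\blfdiv(v,\cdot)|_{\IP^0}$ in $\IT_w^u$.
\item By the LBB condition of \cref{lem:LBB_P0} and the closed range theorem (in finite dimensions), this map is surjective onto $(\IP^0)'$. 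Hence there is a unique $p_h^0\in\IP^0$ with $G(v)=-\blfdiv(v,p_h^0)$ for all $v\in\IT_w^u$. With this $p_h^0$, \eqref{eq:tdgoseen:1} holds.
\end{enumerate}

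\emph{Main obstacle.} The delicate part is the bookkeeping of the pressure: checking that the gradient-part condition identifies $p_h$ exactly as $\Plift{w}u_h+\Pi_\nabla f$ modulo $\IP^0$. This relies on $\Pi_\nabla$ being the $L^2$-projection onto gradients and on the sign conventions in $\oposeenelem{w}$. The other point needing care is the Lagrange-multiplier recovery of $p_h^0$, which rests on \cref{lem:LBB_P0}.
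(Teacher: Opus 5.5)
Your proposal is correct and follows the paper's proof in its essentials: you use the pressure representation $p_h=\Plift{w}u_h+\Pi_\nabla f+p_h^0$, you test with $(\vr,\Plift{w}\vr)$ for $\vr\in\TR_w$ so that $\blfdiv(\vr,p_h^0)$ drops out, and in the converse you recover $p_h^0$ via \cref{lem:LBB_P0}. The only difference is that your forward direction works with $u_h$ directly, through the orthogonal splitting $[\IP^{k-2}]^d=\QR\oplus\nabla\IP^{k-1}$, whereas the paper splits $\up_h=\up_\IT+\up_\IL$ and invokes \cref{eq:divrelation} and the uniqueness from \cref{cor:AKw}; your version is slightly more economical but not a genuinely different argument.
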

\begin{proof}
  Let $\up_h = (u_h,p_h) \in \IX$ be solution to \eqref{eq:tdgoseen}.
  We decompose $\up_h = \up_\IT + \up_\IL$, i.e 
  $(u_h,p_h) = (u_\IT,p_\IT) + (u_\IL,p_\IL)$ and have from the second equation of 
  the local problem 
  that 
  $\Div u_h|_\Th = 0$. With the second Trefftz condition, we hence have
  $\Div u_h|_\Th = \Div u_\IT|_\Th =\Div u_\IL|_\Th =0$. Further,
  testing \eqref{eq:tdgoseen} with $(0,q_h^0)$ for $q_h^0\in\IP^0$ we find -- together with \cref{eq:divrelation} -- that we have $\blfdiv(u_h,q_h^0) = \blfdiv(u_\IT,q_h^0) = \blfdiv(u_\IL,q_h^0) = 0$.
  Hence, $u_h, u_\IL, u_\IT \in \XR$ and $\up_\IT = (u_\IT, \Plift{w} u_\IT + p_h^0)$, $p_h^0\in\IP^0$ so that $u_\IT \in \TR_w$ and $u_\IL \in \LR$.
  Writing out the first equation of the  Trefftz condition
  we get
  $$
   \opcd{w} u_h + \nabla p_h = \opcd{w} u_\IL + \nabla p_\IL = \Pi^{k-2} f.
  $$
  With $u_\IL \in \LR$ so that $\oposeenr{w} u_\IL = h \nu^{-\frac12} f$ in $\QR'$ and
  $p_\IL = \Pi_\nabla (f - \opcd{w} u_\IL) = \Pi_\nabla f + \Plift{w} u_\IL$ we found an expression for the unique solution $\up_\IL$ (uniqueness followed from \cref{cor:AKw}).
  We conclude $\xi_\IT = (u_\IT,\Plift{w} u_\IT + p_h^0)$ for $u_\IT \in \TR_w,~p_h^0 \in \IP^0$ and $\xi_\IL = (u_\IL, \Plift{w} u_\IL) + (0,\Pi_\nabla f)$ for $u_\IL \in \LR$, hence $\up_h = (u_h, \Plift{w} u_h) + (0,\Pi_\nabla f + p_h^0)$.

  Now, consider Trefftz test function $\vq_\IT \in \IT_w$ with $\vq_\IT = (v_\IT, \Plift{w} v_\IT)$ for $v_\IT \in \TR_w$ and plug these into \eqref{eq:tdgoseen}:
  $\blfoseen{w}(\up_h,\vq_\IT) =  (f, v_\IT)_\Th$.
  Plugging in the previously obtained characterizations for $\up_\IL$ we find 
  \begin{align}
     \blfoseen{w}(\up_h,\vq_\IT)
     & = \blfoseen{w}((u_h,\Plift{w} u_h),(v_\IT,\Plift{w} v_\IT))
    + \blfoseen{w}((0,\Pi_\nabla f+p_h^0),(v_\IT,\Plift{w} v_\IT)) 
    \nonumber \\
    & = \blfoseenr{w}(u_h,v_\IT) 
    + \blfdiv(v_\IT,\Pi_\nabla f) + \blfdiv(v_\IT,p_h^0) = (f, v_\IT)_\Th \qquad \forall v_\IT \in \TR_w.
    \label{equiv1}
  \end{align}
  As $v_\IT \in \TR$ we have $\blfdiv(v_\IT,p_h^0) = 0$ and hence, the velocity solution of \eqref{eq:tdgoseen} also solves \eqref{eq:redtdgoseen}. 
  To go the other way around let $\ur_h = \ur_\IT + \ur_\IL \in \XR$ with $\ur_\IT \in \TR_w$, $\ur_\IL \in \LR$ be the solution to \eqref{eq:redtdgoseen}. 
  From the considerations above we know already that $\up_\IL = (\ur_\IL, \Plift{w} \ur_\IL + \Pi_\nabla f)$ solves the local problem of \eqref{eq:tdgoseen}.
  For $\up_\IT$ we set $\up_\IT = (\ur_\IT, \Plift{w} \ur_\IT + p_h^0)$, yielding $\up_h = ( \ur_h, \Plift{w} \ur_h + p_h^0 + \Pi_\nabla f)$ for an undetermined $p_h^0\in\IP^0$. 
  Plugging in yields \eqref{equiv1}, but this time $\blfdiv(v_\IT,p_h^0)$ does not vanish as we only have $v_\IT \in \IT_w^u$ and not $v_\IT \in \TR_w$. However, due to the LBB condition \cref{lem:LBB_P0} we know that there is a unique $p_h^0 \in \IP^0$ so that \eqref{equiv1} indeed holds.
  This concludes the proof.
\end{proof}

\subsubsection{Norm equivalence between velocity and lifted velocity-pressure pair}

Let us define the scaled velocity norm
$\norm{\cdot}_{\XR} := \nu^{\frac12}\vdgnorm{\cdot}$ 
on $[H^1(\Th)]^d$.
and
$\norm{\cdot}_{\XR,\ast} := \nu^{\frac12}\vdgsnorm{\cdot}$
on $\XRreg = [H^2(\Th)]^d$.
A key auxiliary fact, used throughout, is that the pressure lifting satisfies
$\Pi^0_\Th \Plift{w} u = 0$ for any $u\in[H^2(\Th)]^d$
(since $\Plift{w}$ maps into $\IP^{k-1}/\IP^0$), and by \eqref{eq:AKw_continuity} for any $\ur \in \XRreg$
\begin{subequations}
\begin{align}
  \norm{\ur}_{\XR,\ast}^2 &\le \dgsnorm{(\ur,\Plift{w} \ur)}^2
  = \norm{\ur}_{\XR,\ast}^2 + \nu^{-1}\|h\nabla\Plift{w} \ur\|_\Th^2
  \le \norm{\ur}_{\XR,\ast}^2 + \|\nu^{-\frac12} h \opcd{w} \ur \|_\Th^2 \nonumber \\
  &
  \le \norm{\ur}_{\XR,\ast}^2 + \| \oposeen{w} (\ur,0)\|_{\IQ'}^2 
  \stackrel{\eqref{eq:AKw_continuity}}{\le} \norm{\ur}_{\XR,\ast}^2 + \contlocal{w}^2 \| (\ur,0)\|_{\IX,\ast}^2 
   \le (1+ \contlocal{w}^2) 
  \norm{\ur}_{\XR,\ast}^2. \label{eq:plift_norm}
\end{align}
On the discrete space $\XR$ we have by norm equivalence of $\vdgsnorm{\cdot}$ and $\vdgnorm{\cdot}$ (i.e. $\norm{\cdot}_{\XR,\ast}$ and $\norm{\cdot}_{\XR}$)
\begin{equation}
\norm{\ur_h}_{\XR} \le \dgnorm{(\ur_h,\Plift{w} \ur_h)} \le C (1+\contlocal{w}^2)^{\frac12} \norm{\ur_h}_{\XR} \text{ for all } \ur_h \in \XR.
\label{eq:plift_normequiv}
\end{equation}
\end{subequations}

\section{Analysis of the Oseen Trefftz-DG method}\label{sec:oseenanalysis}\label{sec:analysis}

We establish the three fundamental stability properties of the reduced bilinear form
$\blftrefftzr{w}$ from \eqref{eq:redtdgoseen}: continuity, 
coercivity of the global part $\blfoseenr{w}$ on the Trefftz subspace $\TR_w$, and an inf-sup condition.  
The last result yields well-posedness of
\eqref{eq:redtdgoseen}.
We equip the test space $\ZR = \TR_w\times\QR$ with the norm
\begin{equation}\label{eq:ZRnorm}\tag{$\|\cdot\|_{\ZR}$}
  \|(\vr_\IT,\rr_h)\|_{\ZR}^2 := \|\vr_\IT\|_{\XR}^2 + \|\rr_h\|_{\QR}^2.
\end{equation}

\subsection{Continuity}\label{ssec:oseen_cont}
\begin{lemma}[Continuity]\label{lem:blfcont}
Assume \eqref{ass:wh}. There holds
\begin{subequations}
  \begin{align}
    \blfoseen{w}(\up,\vq_h) &\leq
    \contoseen{w} \norm{\up}_{\IX,\ast} \norm{\vq_h}_{\IX}, \hspace*{-0.4cm}& (u,p) = \xi &\in \IXreg, \hspace*{-0.4cm}& (v_h,p_h) = \vq_h &\in \IX, 
    \label{eq:cont_blfoseen}
    \\ 
    \blftrefftz{w}(\up,\z_h) &\leq
    \conttrefftz{w} \norm{\up}_{\IX,\ast} \norm{\z_h}_{\IZ}, \hspace*{-0.4cm}& (u,p) = \xi &\in \IXreg, \hspace*{-0.4cm}&(\vq_\IT,\rs_h) = \z_h &\in \IZ_w, 
    \label{eq:cont_blfk}
    \\
    \blfoseenr{w}(\ur,\vr_h) &\leq
    \contoseenr{w} \norm{\ur}_{\XR,\ast} \norm{\vr_h}_{\XR}, \hspace*{-0.4cm}& \ur &\in \XRreg, \hspace*{-0.4cm}&\vr_h &\in \XR,
    \label{eq:cont_blfoseenr}
    \\ 
    \blftrefftzr{w}(\ur,\zr_h) &\leq 
    \conttrefftzr{w} \norm{\ur}_{\XR,\ast} \norm{\zr_h}_{\ZR}, \hspace*{-0.4cm}& \ur &\in \XRreg, \hspace*{-0.4cm}&(\vr_\IT,\rr_h) = \zr_h &\in \ZR_w,
    \label{eq:cont_blfkr}
    \\
    \blfoseenr{w}(\ur_\IL,\ur_\IT) &\le
    \contcross{w}\,\|\ur_\IL\|_{\XR}\,\|\ur_\IT\|_{\XR},
    \hspace*{-0.4cm}& \ur_\IL &\in \LR, \hspace*{-0.4cm}&\ur_\IT &\in \TR_w.
    \label{eq:cross_improved}
  \end{align}
\end{subequations}
where the constants $\contoseen{w}$, $\conttrefftz{w}$, $\contoseenr{w}$, $\conttrefftzr{w}$, depend on the convection field through the ratio $|w|_{1,4,h}/\nu$.
In contrast, $\contcross{w}$ depends on the convection field only through the resolution quantity $|w|_{h,d}$, and not through $|w|_{1,4,h}/\nu$.
\end{lemma}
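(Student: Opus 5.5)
The five bounds are proved in the order stated. Each later bound reduces to an earlier one, except the cross term \eqref{eq:cross_improved}, which needs its own argument.

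For \eqref{eq:cont_blfoseen} we bound the four pieces of $\blfoseen{w}$ separately. For the viscous part $\blfvis(u,v_h)$ we use Cauchy--Schwarz on volume and facet terms. The consistency term $(\avg{\nu\partial_n u},\jmp{v_h})_\Fh$ is controlled by $\nu\|h^{1/2}\partial_n u\|_{\partial\Th}\|h^{-1/2}\jmp{v_h}\|_\Fh$. The symmetric term $(\avg{\nu\partial_n v_h},\jmp{u})_\Fh$ uses the discrete trace inverse inequality for $v_h$. The penalty term is bounded by $\lambda\nu\|u\|_{1,h}\|v_h\|_{1,h}$. Altogether this gives $C\nu\|u\|_{1,h,\ast}\|v_h\|_{1,h}$.

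For the coupling terms we bound $\blfdiv(v_h,p)$ and $\blfdiv(u,q_h)$ by writing $p=\Pi^0p+(p-\Pi^0p)$:
\begin{itemize}
\item For the $\Pi^0p$ part we integrate by parts elementwise, which leaves only facet jumps $(\jmp{\Pi^0p},\avg{v_h\cdot n})$. These are bounded via $\|h^{1/2}\jmp{\Pi^0p}\|_\Fhi\|h^{-1/2}v_h\|$ together with a broken Poincar\'e/trace bound for $v_h$, where $v_h$ may be modified by a constant, since the pressure is defined modulo $\IR$.
\item For the fluctuation part $p-\Pi^0p$ we use Poincar\'e, $\|p-\Pi^0p\|_T\le Ch\|\nabla p\|_T$, and a trace inequality.
\end{itemize}
In both cases we get $C\nu^{-1/2}\|p\|_{0,h}\cdot\nu^{1/2}\|v_h\|_{1,h}$, and analogously for $\blfdiv(u,q_h)$ with the $\ast$-norm of $u$. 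The convection term follows from \eqref{eq:ch_cont}: $\contconv|w|_{1,4,h}\|u\|_{1,h,\ast}\|v_h\|_{1,h}=\contconv(|w|_{1,4,h}/\nu)\,\nu\|u\|_{1,h,\ast}\|v_h\|_{1,h}$. This is where the ratio $|w|_{1,4,h}/\nu$ enters.

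For \eqref{eq:cont_blfk} we add the local part. By Cauchy--Schwarz in $\IQ$ and \cref{lem:localconvection} we have $\langle\oposeen{w}\up,\rs_h\rangle\le\contlocal{w}\dgsnorm{\up}\|\rs_h\|_\IQ$, and this term is simply added to \eqref{eq:cont_blfoseen}. For \eqref{eq:cont_blfoseenr} we apply \eqref{eq:cont_blfoseen} to the lifted pairs. By \eqref{eq:plift_norm}, $\dgsnorm{(\ur,\Plift{w}\ur)}\le(1+\contlocal{w}^2)^{1/2}\|\ur\|_{\XR,\ast}$, and by \eqref{eq:plift_normequiv} the discrete test pair satisfies the analogous bound. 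Then \eqref{eq:cont_blfkr} follows by adding \eqref{eq:AKrw_cont_coer}, with $\|\oposeenr{w}\ur\|_{\QR'}\le\contlocal{w}\nu\vdgsnorm{\ur}$. The scaling must be checked here, since $\nu\vdgsnorm{\ur}$ versus $\nu^{1/2}$ should match the $\IQ$-scaling of \eqref{eq:AKw_continuity}.

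The main obstacle is \eqref{eq:cross_improved}, where the constant must avoid $|w|_{1,4,h}/\nu$. We expand
\[\blfoseenr{w}(\ur_\IL,\ur_\IT)=\blfvis(\ur_\IL,\ur_\IT)+\blfdiv(\ur_\IL,\Plift{w}\ur_\IT)+\blfdiv(\ur_\IT,\Plift{w}\ur_\IL)+\blfconv{w}(\ur_\IL,\ur_\IT).\]
Both velocities are elementwise divergence-free, so each $\blfdiv$ term reduces to facet terms $(\jmp{v\cdot n},\avg{q})$ with $q$ of zero element mean. These are bounded with trace and inverse estimates by $\|\cdot\|_{1,h}$ times $h\|\nabla\Plift{w}\cdot\|$. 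The lifting in turn is bounded through $h\nu^{-1/2}\|\opcd{w}\cdot\|$, where the convective part is controlled by $|w|_{h,d}$ via \eqref{eq:localconvection}. Thus only $|w|_{h,d}$ appears, and the $\nu$-scalings give $\nu\|\cdot\|_{1,h}^2$.

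The viscous term is bounded as before. For the convection term we use the mixed estimate \eqref{eq:ch_mixed_L4} with $z=\ur_\IL\in\ILu$, which is admissible by \cref{rem:ILu_poincare}. This gives $C_{t,\ast}\nu|w|_{h,d}\|\ur_\IL\|_{1,h,\ast}\|\ur_\IT\|_{1,h}$, and discrete norm equivalence replaces the $\ast$-norm.

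The delicate point is the ordering: the convection form must act on the $\IL$-component as trial function. If it instead fell on the Trefftz part, this cancellation would be unavailable, and one would check whether the nonnegativity \eqref{eq:ch_nonneg} or the mixed estimate applies in the other slot.
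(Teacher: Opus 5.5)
\paragraph{Where the argument fails.} The gap is your bound for the piecewise-constant part $p^0=\Pi^0p$ of the coupling terms. After elementwise integration by parts you have $\blfdiv(v_h,p^0)=-(\avg{v_h\cdot n},\jmp{p^0})_{\Fhi}$, so you need $\|h^{-1/2}\avg{v_h}\|_{\Fhi}\lesssim\|v_h\|_{1,h}$. A trace inequality only gives $h^{-1}\|v_h\|_T+\|\nabla v_h\|_T$ per element, and elementwise Poincar\'e controls only $v_h-\Pi^0v_h$.

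Shifting $v_h$ by a constant does not help. The quotient by $\IR$ acts on the pressure, not the velocity. A shift $c$ changes $\blfdiv(v_h,p^0)$ by $\int_{\partial\Omega}(c\cdot n)\,p^0$ and changes $\|v_h\|_{1,h}$ through the boundary jumps. And no single constant makes $v_h$ small on every element.

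\paragraph{The bound itself is false.} The step cannot be repaired. With $\|\cdot\|_{0,h}$ as defined (jumps of $\Pi^0p$ weighted by $h^{1/2}$, which on $\IP^0$ is weaker than $L^2$ by up to a factor $h$), the inequality $\blfdiv(v_h,q)\le C\|v_h\|_{1,h}\|q\|_{0,h}$ does not hold. Let $v_h\in[\IP^k]^d\cap[H^1_0(\Omega)]^d$ interpolate $\phi e_1$, with $0\le\phi\in C_c^\infty(\Omega)$, $\phi\not\equiv0$, and let $q=\Pi^0x_1$.
\begin{itemize}
\item All jumps of $v_h$ vanish, so $\blfdiv(v_h,q)=-(\Div v_h,\Pi^0x_1)_\Omega=\int_\Omega(v_h)_1+O(h)\to\int_\Omega\phi>0$, while $\|v_h\|_{1,h}$ stays bounded.
\item On the other hand $\nabla q=0$ and $|\jmp{q}|\lesssim h$, so $\|q\|_{0,h}^2\lesssim\sum_{F\in\Fhi}h_F|F|\,h^2\lesssim h^2$.
\end{itemize}
The same happens with a Trefftz test function. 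Replacing $v_h$ by the piecewise constant $\Pi^0(\phi e_1)$ keeps $\|\cdot\|_{1,h}$ bounded, gives $(\Pi^0(\phi e_1),0)\in\IT_w$, and changes $\blfdiv(\cdot,q)$ only by $O(h)$. Hence \eqref{eq:cont_blfk} fails as well.

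The paper's proof does not resolve this; it simply asserts $\blfdiv(v_h,q)\le\contdiv\vdgnorm{v_h}\norm{q}_{0,h}$. For \eqref{eq:cont_blfoseen} and \eqref{eq:cont_blfk}, whose arguments carry a $\IP^0$ pressure component, you need stronger control of $\Pi^0p$. One option is the $L^2$-norm of its mean-free representative. Then, without integrating by parts,
$|\blfdiv(v_h,p^0)|\le\|\Div v_h\|_{\Th}\|p^0\|_{\Th}+\|h^{-1/2}\jmp{v_h}\|_{\Fh}\|h^{1/2}\avg{p^0}\|_{\Fh}\lesssim\|v_h\|_{1,h}\|p^0\|_{\Th}$.

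\paragraph{What is correct.} The rest of your proposal follows the paper's route and is sound:
\begin{itemize}
\item \eqref{eq:ch_cont} produces the ratio $|w|_{1,4,h}/\nu$.
\item \eqref{eq:AKw_continuity} handles the local part.
\item \eqref{eq:plift_norm} and \eqref{eq:plift_normequiv} give the passage to lifted pairs.
\item For \eqref{eq:cross_improved} you use \eqref{eq:ch_mixed_L4} with $z=\ur_\IL\in\ILu$ (\cref{rem:ILu_poincare}), plus a bound of the lifting through $\contlocal{w}$, which involves $w$ only via $|w|_{h,d}$.
\end{itemize}
Your mean-free coupling estimate, $|\blfdiv(v,q)|\lesssim\|v\|_{1,h}\|h\nabla q\|_{\Th}$ for $\Pi^0q=0$ (elementwise Poincar\'e plus trace), is correct. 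It is all that \eqref{eq:cont_blfoseenr}, \eqref{eq:cont_blfkr} and \eqref{eq:cross_improved} need, because only liftings $\Plift{w}\ur\in\IP^{k-1}/\IP^0$ enter there. So derive these three bounds from it directly rather than through \eqref{eq:cont_blfoseen}.

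Your scaling concern is also justified. The proof of \cref{cor:AKrw} actually yields $\|\oposeenr{w}\ur\|_{\QR'}\le\contlocal{w}\dgsnorm{(\ur,0)}=\contlocal{w}\nu^{1/2}\vdgsnorm{\ur}$. So the factor $\nu$ in \eqref{eq:AKrw_cont_coer} should read $\nu^{1/2}$, which is exactly what \eqref{eq:cont_blfkr} requires.
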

\begin{proof}
Bounding $\blfvis(u,v_h) \leq \contvis \nu \vdgsnorm{u} \vdgnorm{v_h}$, $\blfconv{w}(u,v_h) \leq \contconv |w|_{1,4,h}\|u\|_{1,h,\ast}\|v_h\|_{1,h}$ (using \eqref{eq:ch_cont}), and $\blfdiv(u,q_h) \leq \contdiv  \vdgnorm{u} \norm{q_h}_{0,h}$, $\blfdiv(v_h,q) \leq \contdiv  \vdgnorm{v_h} \norm{q}_{0,h}$, we obtain the first bound \eqref{eq:cont_blfoseen} with $ \contoseen{w} = C \Bigl(\contvis+\contdiv + \contconv \frac{|w|_{1,4,h}}{\nu}\Bigr). $
The second bound \eqref{eq:cont_blfk} then follows from \eqref{eq:cont_blfoseen} and \eqref{eq:AKw_continuity} with $\conttrefftz{w} = \contoseen{w} + \contlocal{w}$.
For \eqref{eq:cont_blfoseenr} we find
$$
\blfoseenr{w}(\ur,\vr_h) = 
\blfoseen{w}((\ur,\Plift{w}\ur),(\vr_h,\Plift{w}\vr_h))
\le \contoseen{w}
\norm{(\ur,\Plift{w}\ur)}_{\IX,\ast} \norm{(\vr_h,\Plift{w}\vr_h)}_{\IX}
\le \contoseenr{w}
\norm{\ur}_{\XR,\ast} \norm{\vr_h}_{\XR}
$$ 
with $\contoseenr{w} = C (1 + \contlocal{w}^2) \contoseen{w}$. Finally, for \eqref{eq:cont_blfkr} we gather \eqref{eq:cont_blfoseenr} and \eqref{eq:AKrw_cont_coer} so that $\conttrefftzr{w} = \contoseenr{w} + \contlocal{w}$.

Finally, since $\ur_\IL\in\LR\subset\ILu$ and $\ILu$ is an admissible choice for
$\IX_{\rm ps}(\Th)$ by \cref{rem:ILu_poincare}, the improved convection estimate
\eqref{eq:ch_mixed_L4} yields
\[
|\blfconv{w}(\ur_\IL,\ur_\IT)|
\le 
C_{t,\ast}\,\nu\,|w|_{h,d}\,\|\ur_\IL\|_{1,h}\,\|\ur_\IT\|_{1,h}
= 
C_{t,\ast}\,|w|_{h,d}\,\|\ur_\IL\|_{\XR}\,\|\ur_\IT\|_{\XR},
\]
with
$\contcross{w} := C\bigl(\contvis+\contdiv\,\contlocal{w}+C_{t,\ast}|w|_{h,d}\bigr).$
\end{proof}

\subsection{Stability of the reduced formulation}
\label{ssec:oseen_reduced_stability}

We first establish stability for the reduced velocity formulation.
The stability proof reflects the space decomposition $\XR=\TR_w\oplus\LR$. 
The $\TR_w$-component is controlled by coercivity of the reduced Oseen form $\blfoseenr{w}$, whereas the $\LR$-component is controlled by the local residual operator $\oposeenr{w}$.
The interaction between the two components is bounded by the
improved cross estimate \eqref{eq:cross_improved}.

The first result, \cref{lem:coerc_IT_perp}, is therefore a coercivity statement on the reduced Trefftz kernel.
The second result, \cref{th:Khinfsup}, then combines this coercivity with the local stability on $\LR$ and yields the inf-sup condition for the reduced coupled form $\blftrefftzr{w}$ on $\XR\times\ZR$.
Finally, \cref{lem:oseen_energy_stab}, discusses the stability of the solution map, another crucial step in preparation for the analysis of the nonlinear problem in \cite{SVLL2_ARXIV_2026}.

We begin with coercivity on $\TR_w$. The penalty parameter is chosen large enough to absorb the pressure-lifting contribution generated by $\Plift{w}$.

\begin{lemma}[Coercivity of $\blfoseenr{w}$ on $\TR_w$]\label{lem:coerc_IT_perp}
There exist $\lambdadg>0$ and $C>0$, depending only on shape-regularity, $k$, 
such that for the IP parameter $\lambda\ge\lambdadg+C\contlocal{w}^2$ in \eqref{eq:sipdg}, there holds
\begin{equation}\label{eq:coerc_TR}
  \blfoseenr{w}(\ur_\IT,\ur_\IT) \ge \staboseenr{w} \norm{\ur_\IT}_{\XR}^2
  \qquad\forall\,\ur_\IT\in\TR_w,
  \quad\text{with } \staboseenr{w}=\tfrac12. %
\end{equation}
\end{lemma}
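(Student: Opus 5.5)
Let $\ur_\IT\in\TR_w$ and write $p:=\Plift{w}\ur_\IT$. Since $\ur_\IT\in\TR_w$ has $\Div \ur_\IT|_\Th=0$, the volume part of $\blfdiv(\ur_\IT,p)$ vanishes, and only the facet term survives. The plan is therefore to expand
\[
\blfoseenr{w}(\ur_\IT,\ur_\IT)=\blfvis(\ur_\IT,\ur_\IT)+2\blfdiv(\ur_\IT,p)+\blfconv{w}(\ur_\IT,\ur_\IT),
\]
discard the convective term via the nonnegativity \eqref{eq:ch_nonneg}, and then bound the remaining pieces one at a time.

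The viscous form is standard. Using a discrete trace inequality and Young's inequality on the consistency terms gives
\[
\blfvis(\ur_\IT,\ur_\IT)\ge \tfrac34\nu\|\nabla\ur_\IT\|_\Th^2+(\lambda-\lambdadg)\nu\|h^{-\frac12}\jmp{\ur_\IT}\|_\Fh^2
\]
for a suitable $\lambdadg$ that depends only on $k$ and shape regularity.

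The main obstacle is the pressure-coupling term $2(\jmp{\ur_\IT\cdot n},\avg{p})_\Fh$. The pressure $p$ is not controlled by the velocity energy for free; it is tied to $\ur_\IT$ through the Trefftz relation. First, a polynomial trace inequality together with elementwise Poincaré applied to $p$ (which has zero mean, since $p\in\IP^{k-1}/\IP^0$) gives $\|h^{\frac12}\avg{p}\|_\Fh\le C\|h\nabla p\|_\Th$. Second, by the definition of $\Plift{w}$ and the chain of estimates in \eqref{eq:plift_norm}, $\nu^{-\frac12}\|h\nabla p\|_\Th\le\|\oposeen{w}(\ur_\IT,0)\|_{\IQ'}\le\contlocal{w}\,\nu^{\frac12}\|\ur_\IT\|_{1,h,\ast}$. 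Third, the inverse estimates of the $\ast$-norm on polynomials give $\|\ur_\IT\|_{1,h,\ast}\le C\|\ur_\IT\|_{1,h}$. Combining these three steps with Young's inequality yields
\[
2|\blfdiv(\ur_\IT,p)|\le \tfrac14\nu\|\ur_\IT\|_{1,h}^2+C\contlocal{w}^2\nu\|h^{-\frac12}\jmp{\ur_\IT}\|_\Fh^2.
\]

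Collecting the bounds, with $\lambda\ge\lambdadg+C\contlocal{w}^2$ (enlarging the constants if needed so that the jump coefficient is at least $\tfrac12$ after also absorbing the $\tfrac14$ jump contribution), we obtain $\blfoseenr{w}(\ur_\IT,\ur_\IT)\ge\tfrac12\nu\|\ur_\IT\|_{1,h}^2=\tfrac12\|\ur_\IT\|_{\XR}^2$. The only delicate bookkeeping is the absorption step above. The $\tfrac14\nu\|\ur_\IT\|_{1,h}^2$ term contains both a gradient part and a jump part. The gradient part is absorbed by the $\tfrac34$ coefficient from $\blfvis$, while the jump part is absorbed by the enlarged penalty $\lambda$. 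This absorption is exactly why the penalty threshold must depend on $\contlocal{w}^2$.
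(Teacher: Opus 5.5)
Your proposal is correct and follows the paper's proof essentially step by step. You drop $\blfconv{w}(\ur_\IT,\ur_\IT)\ge 0$, use the SIPG lower bound, and reduce $\blfdiv(\ur_\IT,\Plift{w}\ur_\IT)$ to its facet term via $\Div\ur_\IT|_\Th=0$; you then bound $\nu^{-\frac12}\|h^{\frac12}\avg{\Plift{w}\ur_\IT}\|_\Fh$ by a trace inequality, elementwise Poincar\'e and \eqref{eq:plift_norm}, and weight Young's inequality so the pressure part costs $\tfrac14\|\ur_\IT\|_{\XR}^2$ while the $\mathcal{O}(\contlocal{w}^2)$ jump part is absorbed by the enlarged penalty. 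The only differences are cosmetic: you keep the $\tfrac34$ on the gradient alone, which shifts $\lambdadg$ by a constant, and you state the inverse estimate $\|\cdot\|_{1,h,\ast}\lesssim\|\cdot\|_{1,h}$ explicitly, whereas the paper folds it into \eqref{eq:plift_norm}.
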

\begin{proof}
Let $\up_h := (\ur_\IT,\Plift{w}\ur_\IT)\in\IT_w$.
By \cref{ass:ch}, $\blfconv{w}(\ur_\IT,\ur_\IT)\ge 0$, we have
\[
  \blfoseenr{w}(\ur_\IT,\ur_\IT)
  = \blfoseen{w}(\up_h,\up_h)
  \ge \blfvis(\ur_\IT,\ur_\IT) + 2\blfdiv(\ur_\IT,\Plift{w} \ur_\IT).
\]
The SIPG form satisfies $\blfvis(\ur_\IT,\ur_\IT)\ge \frac34 \|\ur_\IT\|_{\XR}^2
+ (\lambda-\lambdadg)\nu\|h^{-\frac12}\jmp{\ur_\IT}\|_{\Fh}^2$ for $\lambda\ge\lambdadg$. We will use the latter contribution to bound the $\blfdiv(\cdot,\cdot)$ contribution.
Since $\Div\ur_\IT=0$ on each $T$,
\begin{align*}
2 \blfdiv(\ur_\IT,\Plift{w} \ur_\IT) &= 2 (\jmp{\ur_\IT\cdot n},\avg{\Plift{w}\ur_\IT})_\Fh
\leq 
\varepsilon^{-1} \nu\|h^{-\frac12}\jmp{\ur_\IT}\|_{\Fh}^2
+
\varepsilon \nu^{-1} \| h^{\frac12}\avg{\Plift{w}\ur_\IT}\|_{\Fh}^2,
\end{align*}
where we used Cauchy--Schwarz and Young with $\varepsilon>0$. With 
the polynomial trace inequality, discrete norm equivalences and scaling arguments and the Trefftz identity as in \eqref{eq:plift_norm} we obtain
\begin{align*}
\nu^{-1}\| h^{\frac12}\avg{\Plift{w}\ur_\IT}\|_{\Fh}^2
\leq 
C \nu^{-1} \|\Plift{w} \ur_\IT\|_{\Th}^2 
\leq 
C \nu^{-1} \|h \nabla \Plift{w} \ur_\IT\|_{\Th}^2 
\stackrel{\eqref{eq:plift_norm}}{\le}
C \contlocal{w}^2 \|\ur_\IT\|_{\XR}^2. 
\end{align*}
With $\varepsilon = (4C\contlocal{w}^2)^{-1}$,
the claim then follows with $\lambda \ge \lambdadg + 4C\contlocal{w}^2$ as
\[
  \blfoseenr{w}(\ur_\IT,\ur_\IT)
  \ge 
  \frac34 \|\ur_\IT\|_{\XR}^2
+ (\lambda-\lambdadg)\nu\|h^{-\frac12}\jmp{\ur_\IT}\|_{\Fh}^2
- \frac14 \|\ur_\IT\|_{\XR}^2 
- 4C\contlocal{w}^2 \nu\|h^{-\frac12}\jmp{\ur_\IT}\|_{\Fh}^2.
\vspace*{-0.4cm}
\]
\end{proof}

We now extend the preceding coercivity estimate from $\TR_w$ to the whole space $\XR$.
The proof tests the Trefftz component by itself and the complement component by the Riesz representative of its local residual.

\begin{theorem}[Inf-sup stability of $\blftrefftzr{w}$]
\label{th:Khinfsup}
Assume \eqref{ass:wh} and \eqref{ass:A2}, and let $\lambda$ be as in
\cref{lem:coerc_IT_perp}. For all $\ur_h\in\XR$, there holds
\begin{equation}\label{eq:Khr_infsup}
  \sup_{\zr_h\in\ZR\setminus\{0\}}
  \frac{\blftrefftzr{w}(\ur_h,\zr_h)}{\|\zr_h\|_{\ZR}}
  \;\ge\; \stabtrefftzr{w}\|\ur_h\|_{\XR},
\end{equation}
with
$
\delta := \max\left\{ 1,\, \stablocal{w}^{-1},\, \frac{\contcross{w}^{2}} {\staboseenr{w}\stablocal{w}^{2}} \right\},
\
\stabtrefftzr{w} := \frac{\staboseenr{w}}{2\sqrt{2}\,\delta},
$
where $\staboseenr{w}=\tfrac12$ is the coercivity constant from \cref{lem:coerc_IT_perp} and $\contcross{w}$ is the constant from \eqref{eq:cross_improved}.
Again, $\stabtrefftzr{w}$ depends on the convection field only through the resolution quantity $|w|_{h,d}$, and not through $\|w\|_{1,h}/\nu$.
\end{theorem}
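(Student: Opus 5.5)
We write $\ur_h=\ur_\IT+\ur_\IL$ using the decomposition \eqref{eq:XRdecomp}, with $\ur_\IT\in\TR_w$ and $\ur_\IL\in\LR$, and we build a single test function $\zr_h=(\vr_\IT,\rr_h)\in\ZR$ whose norm is controlled by $\|\ur_h\|_{\XR}$. For the Trefftz part we take $\vr_\IT:=\ur_\IT$. For the local part we take $\rr_h:=\delta\,\rr_\IL$, where $\rr_\IL\in\QR$ is the Riesz representative of $\oposeenr{w}\ur_h$ in $\QR$. Since $\TR_w$ lies in the kernel of the reduced local operator, we have $\oposeenr{w}\ur_\IT=0$, hence $\oposeenr{w}\ur_h=\oposeenr{w}\ur_\IL$. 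The representative therefore satisfies $\langle\oposeenr{w}\ur_h,\rr_\IL\rangle_\Th=\|\rr_\IL\|_{\QR}^2=\|\oposeenr{w}\ur_\IL\|_{\QR'}^2$, and by \cref{cor:AKrw} this is at least $\stablocal{w}^2\|\ur_\IL\|_{\XR}^2$. We also note that $\|\rr_\IL\|_{\QR}=\|\oposeenr{w}\ur_\IL\|_{\QR'}$.

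Expanding the bilinear form gives
\[
\blftrefftzr{w}(\ur_h,\zr_h)=\blfoseenr{w}(\ur_\IT,\ur_\IT)+\blfoseenr{w}(\ur_\IL,\ur_\IT)+\delta\|\rr_\IL\|_{\QR}^2 .
\]
We bound the three terms separately.
\begin{itemize}
\item The first term is at least $\staboseenr{w}\|\ur_\IT\|_{\XR}^2$ by \cref{lem:coerc_IT_perp}.
\item The cross term is bounded by \eqref{eq:cross_improved}, which gives $-\contcross{w}\|\ur_\IL\|_{\XR}\|\ur_\IT\|_{\XR}$. This is the only term that couples the two components, and the choice of $\delta$ must absorb it. By Young's inequality it is at most $\tfrac12\staboseenr{w}\|\ur_\IT\|_{\XR}^2+\frac{\contcross{w}^2}{2\staboseenr{w}}\|\ur_\IL\|_{\XR}^2$.
\item The last term is at least $\delta\stablocal{w}^2\|\ur_\IL\|_{\XR}^2$.
\end{itemize}
Because $\delta\ge\contcross{w}^2/(\staboseenr{w}\stablocal{w}^2)$, we have $\frac{\contcross{w}^2}{2\staboseenr{w}}\le\frac{\delta}{2}\stablocal{w}^2$. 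Collecting the terms yields
\[
\blftrefftzr{w}(\ur_h,\zr_h)\ge\tfrac12\staboseenr{w}\|\ur_\IT\|_{\XR}^2+\tfrac{\delta}{2}\stablocal{w}^2\|\ur_\IL\|_{\XR}^2 .
\]
The main point to get right is this cross-term step: \eqref{eq:cross_improved} has to be used rather than the general continuity bound, so that the constant depends only on $|w|_{h,d}$.

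Next we bound the test norm. Using \cref{cor:AKrw}, $\|\rr_\IL\|_{\QR}=\|\oposeenr{w}\ur_\IL\|_{\QR'}\le\contlocal{w}\nu\vdgsnorm{\ur_\IL}$, which is bounded by $C\contlocal{w}\|\ur_\IL\|_{\XR}$ by discrete norm equivalence. Hence
\[
\|\zr_h\|_{\ZR}\le\|\ur_\IT\|_{\XR}+\delta\|\rr_\IL\|_{\QR}.
\]
This step is only loosely tracked here. To reach exactly the stated constant, one should instead compare the lower bound with $\|\rr_\IL\|_{\QR}$ directly: the lower bound contains $\delta\|\rr_\IL\|^2_{\QR}$ itself before \cref{cor:AKrw} is applied. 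So we keep half of $\delta\|\rr_\IL\|^2$ and use the other half, together with $\|\rr_\IL\|\ge\stablocal{w}\|\ur_\IL\|$, to absorb the cross term.

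Finally we bound $\|\ur_h\|_{\XR}$ by the triangle inequality, $\|\ur_h\|_{\XR}\le\|\ur_\IT\|_{\XR}+\|\ur_\IL\|_{\XR}$, and use $\|\ur_\IL\|_{\XR}\le\stablocal{w}^{-1}\|\rr_\IL\|_{\QR}\le\delta\|\rr_\IL\|_{\QR}$. Then $\sqrt2$-type estimates of sums of squares give the ratio lower bound with constant $\staboseenr{w}/(2\sqrt2\,\delta)$, using $\delta\ge1$ and $\delta\ge\stablocal{w}^{-1}$.
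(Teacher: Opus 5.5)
Your final argument is essentially the paper's proof. It uses the same test function $\zr_h=(\ur_\IT,\delta\rr_\IL)$ with $\rr_\IL$ the Riesz representative of $\oposeenr{w}\ur_\IL$, and absorbs the cross term \eqref{eq:cross_improved} by Young's inequality into half of $\delta\|\rr_\IL\|_{\QR}^2$ via $\|\rr_\IL\|_{\QR}\ge\stablocal{w}\|\ur_\IL\|_{\XR}$, giving $\blftrefftzr{w}(\ur_h,\zr_h)\ge\frac{\staboseenr{w}}{2\delta}\|\zr_h\|_{\ZR}^2$ (using $\delta\ge1\ge\staboseenr{w}$); this is then combined with $\|\zr_h\|_{\ZR}^2\ge\|\ur_\IT\|_{\XR}^2+\|\ur_\IL\|_{\XR}^2\ge\frac12\|\ur_h\|_{\XR}^2$, which follows from $\delta\stablocal{w}\ge1$. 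The intermediate upper bound on $\|\zr_h\|_{\ZR}$ through the continuity of $\oposeenr{w}$ is not needed and should simply be dropped, since it would only bring $\contlocal{w}$ into the constant.
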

\begin{proof}
Decompose $\ur_h = \ur_\IT + \ur_\IL\in\XR$ with $\ur_\IT\in\TR_w$ and
$\ur_\IL\in\LR$, and choose the test function
\[
  \zr_h := \bigl(\ur_\IT,\delta \rr_h\bigr)\in\ZR.
\]
and where $\rr_h\in\QR$ is the Riesz representative of
$\oposeenr{w}\ur_\IL\in\QR'$, i.e.
\[
\langle\oposeenr{w}\ur_\IL,\rr_h\rangle_\Th
=
\|\oposeenr{w}\ur_\IL\|_{\QR'}^2
=
\|\rr_h\|_{\QR}^2.
\]
Since $\ur_\IT\in\TR_w$, we have $\langle \oposeenr{w}\ur_\IT,\rr_h\rangle_\Th=0$.

Using coercivity of $\blfoseenr{w}$ on $\TR_w$ (\cref{lem:coerc_IT_perp}), the cross-coupling bound \eqref{eq:cross_improved}, the stability estimate from \cref{cor:AKrw}, and Young's inequality, we obtain
\begin{align*}
  \blftrefftzr{w}(\ur_h,\zr_h)
  &=
  \blfoseenr{w}(\ur_\IT,\ur_\IT)
  + \blfoseenr{w}(\ur_\IL,\ur_\IT)
  + \delta\langle\oposeenr{w}\ur_\IL,\rr_h\rangle_\Th
  \\
  &\ge
  \staboseenr{w}\|\ur_\IT\|_{\XR}^{2}
  -
  \contcross{w}\|\ur_\IL\|_{\XR}\|\ur_\IT\|_{\XR}
  +
  \delta\|\oposeenr{w}\ur_\IL\|_{\QR'}^{2}
  \\
  &\ge
  \staboseenr{w}\|\ur_\IT\|_{\XR}^{2}
  -
  \frac{\contcross{w}}{\stablocal{w}}
  \|\oposeenr{w}\ur_\IL\|_{\QR'}\|\ur_\IT\|_{\XR}
  +
  \delta\|\oposeenr{w}\ur_\IL\|_{\QR'}^{2}
  \\
  &\ge
  \frac{\staboseenr{w}}{2}\|\ur_\IT\|_{\XR}^{2}
  +
  \left(
  \delta
  -
  \frac{\contcross{w}^{2}}
       {2\staboseenr{w}\stablocal{w}^{2}}
  \right)
  \|\oposeenr{w}\ur_\IL\|_{\QR'}^{2}
  \\
  &\ge
  \frac{\staboseenr{w}}{2}\|\ur_\IT\|_{\XR}^{2}
  +
  \frac{\delta}{2}
  \|\oposeenr{w}\ur_\IL\|_{\QR'}^{2}.
  =
  \frac{\staboseenr{w}}{2\delta}
  \|\zr_h\|_{\ZR}^{2}.
\end{align*}
Moreover, since $\delta\stablocal{w}\ge1$, the stability estimate from
\cref{cor:AKrw} gives
\[
  \|\zr_h\|_{\ZR}^{2}
  =
  \|\ur_\IT\|_{\XR}^{2}
  +
  \delta^{2}\|\oposeenr{w}\ur_\IL\|_{\QR'}^{2}
  \ge
  \|\ur_\IT\|_{\XR}^{2}
  +
  \|\ur_\IL\|_{\XR}^{2}
  \ge
  \frac12\|\ur_h\|_{\XR}^{2}.
\]
Taking the supremum over $\zr_h\in\ZR\setminus\{0\}$ proves \eqref{eq:Khr_infsup}.
\end{proof}

\begin{lemma}[Stability of the reduced Oseen solution map]
\label{lem:oseen_energy_stab}
Let $w_h\in \IW$ and let \eqref{ass:wh} and \eqref{ass:A2} hold, such that 
the reduced Oseen problem \eqref{eq:redtdgoseen}
admits a unique solution denoted by $\ur_h$.
Then
\begin{equation}
\|\ur_h\|_{\XR}
\le \frac{C_F}{\stabtrefftzr{w_h}}\nu^{-\frac12}\|f\|_{\Th},
\end{equation}
where $ C_F:=\Bigl((C_\Omega+\contdiv h)^2+h^2\Bigr)^{1/2}. $
In particular, the dependence on the convection field enters only through the
reduced inf-sup constant $\stabtrefftzr{w_h}$.
\end{lemma}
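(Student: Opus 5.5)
The plan is to combine the reduced inf-sup condition of \cref{th:Khinfsup} with a dual-norm bound on the right-hand side functional $\rmod{\ell}_h$ from \eqref{eq:rhs_functional}. Since $\ur_h$ solves \eqref{eq:redtdgoseen}, we have
\[
\stabtrefftzr{w_h}\|\ur_h\|_{\XR}
\le \sup_{\zr_h\in\ZR\setminus\{0\}}\frac{\blftrefftzr{w_h}(\ur_h,\zr_h)}{\|\zr_h\|_{\ZR}}
= \sup_{\zr_h\in\ZR\setminus\{0\}}\frac{\rmod{\ell}_h(\zr_h)}{\|\zr_h\|_{\ZR}}.
\]
It therefore suffices to show $|\rmod{\ell}_h(\vr_\IT,\rr_h)|\le C_F\nu^{-\frac12}\|f\|_{\Th}\,\|(\vr_\IT,\rr_h)\|_{\ZR}$. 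A Cauchy--Schwarz inequality in $\IR^2$ then produces exactly the constant $C_F=((C_\Omega+\contdiv h)^2+h^2)^{1/2}$.

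The functional has three terms, which I bound one at a time.

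\begin{enumerate}
\item \emph{Local term.} By Cauchy--Schwarz,
\[
|(f,h\nu^{-\frac12}\rr_h)_\Th|\le h\,\nu^{-\frac12}\|f\|_\Th\|\rr_h\|_{\QR}.
\]
This gives the $h$ entry of $C_F$.

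\item \emph{Volume term $(f,\vr_\IT)_\Th$.} I use the discrete Poincar\'e inequality for broken $H^1$ functions, $\|v\|_{\Th}\le C_\Omega\|v\|_{1,h}$, which holds because the jump seminorm includes boundary facets; this is where $C_\Omega$ comes from. It yields
\[
|(f,\vr_\IT)_\Th|\le C_\Omega\|f\|_\Th\|\vr_\IT\|_{1,h}=C_\Omega\nu^{-\frac12}\|f\|_\Th\|\vr_\IT\|_{\XR}.
\]

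\item \emph{Pressure-lifting term $\blfdiv(\vr_\IT,\Pi_\nabla f)$.} I use the continuity bound $\blfdiv(v_h,q)\le\contdiv\|v_h\|_{1,h}\|q\|_{0,h}$ from the proof of \cref{lem:blfcont}. Since $\Pi_\nabla f$ has zero elementwise mean, the facet term $\|h^{\frac12}\jmp{\Pi^0\Pi_\nabla f}\|_{\Fhi}$ vanishes. Hence $\|\Pi_\nabla f\|_{0,h}=\|h\nabla\Pi_\nabla f\|_\Th\le h\|f\|_\Th$, because $\nabla\Pi_\nabla$ is an $L^2$-orthogonal projection onto gradients and so has norm at most one. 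Altogether,
\[
|\blfdiv(\vr_\IT,\Pi_\nabla f)|\le\contdiv h\,\nu^{-\frac12}\|f\|_\Th\|\vr_\IT\|_{\XR}.
\]
\end{enumerate}

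Adding the three bounds gives
\[
|\rmod{\ell}_h|\le\nu^{-\frac12}\|f\|_\Th\big((C_\Omega+\contdiv h)\|\vr_\IT\|_{\XR}+h\|\rr_h\|_{\QR}\big)\le C_F\nu^{-\frac12}\|f\|_\Th\|\zr_h\|_{\ZR}.
\]
Dividing by $\stabtrefftzr{w_h}$ concludes the proof. The only dependence on $w_h$ enters through the inf-sup constant of \cref{th:Khinfsup}.

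The main point requiring care is the second item: the discrete Poincar\'e constant $C_\Omega$ must depend only on $\Omega$ and shape regularity. This is standard for DG norms with boundary jumps included in $\Fh$ (cf.\ \cite{DiPietroErn}). The bound $\|h\nabla\Pi_\nabla f\|\le h\|f\|$ in the third item should be checked against the definition of $\Pi_{\nabla,T}$ as a least-squares projection, which gives the orthogonality needed for the norm-one estimate.
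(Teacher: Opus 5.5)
Your proposal is correct and follows the paper's proof step by step. You bound the three terms of $\rmod{\ell}_h$ the same way: discrete Poincar\'e for $(f,\vr_\IT)_\Th$; continuity of $\blfdiv$ together with $\|\Pi_\nabla f\|_{0,h}=\|h\nabla\Pi_\nabla f\|_\Th\le h\|f\|_\Th$ for the lifting term; Cauchy--Schwarz for the local term. You then combine them via Cauchy--Schwarz in $\IR^2$ and the reduced inf-sup estimate of \cref{th:Khinfsup}, exactly as the paper does.
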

\begin{proof}
Let $\zr_h=(\vr_\IT,\rr_h)\in\ZR=\TR_{w_h}\times\QR$.
The right-hand side of \eqref{eq:redtdgoseen} is
\[
(f,\vr_\IT)_\Th - \blfdiv(\vr_\IT,\Pi_\nabla f)
+ (f,h\nu^{-\frac12}\rr_h)_\Th .
\]
We bound the three terms separately.
For the first term, using discrete Poincar\'e inequality \eqref{sp},
\[
|(f,\vr_\IT)_\Th|
\le \|f\|_{\Th} \|\vr_\IT\|_{\Th}
\le C_\Omega \|f\|_{\Th} \|\vr_\IT\|_{1,h}
= C_\Omega \nu^{-\frac12}\|f\|_{\Th} \|\vr_\IT\|_{\XR}.
\]
For the second term, by continuity of $\blfdiv$ and since
$\Pi_\nabla f\in \IP^{k-1}/\IP^0$ has elementwise mean zero, we get
\[
\|\Pi_\nabla f\|_{0,h}
= \|h \nabla \Pi_\nabla f\|_{\Th}.
\]
Moreover, $\nabla\Pi_\nabla f$ is the $L^2$-orthogonal projection of $f$ onto the
space of discrete gradients, hence
\[
\|\nabla\Pi_\nabla f\|_{\Th}\le \|f\|_{\Th},
\qquad\text{and therefore}\qquad
\|\Pi_\nabla f\|_{0,h}\le h\|f\|_{\Th},
\]
and thus
\[
|\blfdiv(\vr_\IT,\Pi_\nabla f)|
\le \contdiv \|\vr_\IT\|_{1,h} \|\Pi_\nabla f\|_{0,h}
\le \contdiv h \nu^{-\frac12}\|f\|_{\Th} \|\vr_\IT\|_{\XR}.
\]
For the local term there holds
\[
|(f,h\nu^{-\frac12}\rr_h)_\Th|
\le h\nu^{-\frac12}\|f\|_{\Th} \|\rr_h\|_{\QR}.
\]

Collecting the three bounds gives
\[
\bigl|(f,\vr_\IT)_\Th - \blfdiv(\vr_\IT,\Pi_\nabla f)
+ (f,h\nu^{-\frac12}\rr_h)_\Th\bigr|
\le
\nu^{-\frac12}\|f\|_{\Th}
\Bigl(
(C_\Omega+\contdiv h)\|\vr_\IT\|_{\XR}
+
h\|\rr_h\|_{\QR}
\Bigr),
\]
thus using Cauchy--Schwarz,
\[
\bigl|(f,\vr_\IT)_\Th - \blfdiv(\vr_\IT,\Pi_\nabla f)
+ (f,h\nu^{-\frac12}\rr_h)_\Th\bigr|
\le
C_F \nu^{-\frac12}\|f\|_{\Th} \|\zr_h\|_{\ZR}.
\]
Now apply the reduced inf-sup estimate \eqref{eq:Khr_infsup} from
\cref{th:Khinfsup} to the solution $\ur_h$ of \eqref{eq:redtdgoseen}:
\[
\stabtrefftzr{w_h}\|\ur_h\|_{\XR}
\le
\sup_{\zr_h\in\ZR\setminus\{0\}}
\frac{\blftrefftzr{w_h}(\ur_h,\zr_h)}{\|\zr_h\|_{\ZR}}
=
\sup_{\zr_h\in\ZR\setminus\{0\}}
\frac{(f,\vr_\IT)_\Th - \blfdiv(\vr_\IT,\Pi_\nabla f)
+ (f,h\nu^{-\frac12}\rr_h)_\Th}{\|\zr_h\|_{\ZR}}
\]
Together with the preceding bound, this gives the claim.
\end{proof}

\subsection{Stability of the full formulation}
\label{ssec:oseen_full_stability}

We now pass from the reduced velocity formulation to the full velocity-pressure formulation.
This requires two additional ingredients.
First, the pressure modes which are not represented in the reduced velocity problem are controlled by the piecewise-constant LBB estimate, from \cref{lem:LBB_P0}.
Second, the stability of the local operator on the full local complement $\IL$ established in \cref{cor:AKw}.

The resulting argument has two levels.
We first prove an inf-sup estimate for the full Oseen form on the Trefftz space $\IT_w$.
This step combines the reduced velocity stability with the pressure control supplied by the LBB estimate.
We then obtain the stability of the full coupled Trefftz formulation on $\IX$ by adding the local residual equation and using the stability and continuity of $\oposeen{w}$ on the complement.
The final step is done by applying the framework of \cite[\S 2.5]{LLSV_ARXIV_2024}

\begin{theorem}[Inf-sup stability of the full Oseen form on $\IT_w$]
\label{thm:oseen_full_Trefftz_infsup}
Assume \eqref{ass:wh} and \eqref{ass:A2}, and let $\lambda$ be chosen as in
\cref{lem:coerc_IT_perp}. Then there exists a constant $\staboseen{w}>0$ such that
\begin{equation}\label{eq:oseen_full_Trefftz_infsup}
\sup_{\vq_\IT\in \IT_w\setminus\{0\}}
\frac{\blfoseen{w}(\up_\IT,\vq_\IT)}{\dgnorm{\vq_\IT}}
\ge
\staboseen{w}\,\dgnorm{\up_\IT}
\qquad\forall\,\up_\IT\in \IT_w,
\end{equation}
with the constant
\begin{equation}\label{eq:alpha_oseen_full}
\staboseen{w}
=
C\min\left\{
\staboseenr{w},
\frac{\alpha_0^2}{(1+\contlocal{w}^2)\contoseen{w}}
\right\},
\end{equation}
where $\staboseenr{w}$ is the coercivity constant from
\cref{lem:coerc_IT_perp}, $\alpha_0$ is the constant from
\cref{lem:LBB_P0}, 
with $C>0$ a constant that depends only on the equivalence constant in \eqref{eq:plift_normequiv}.
\end{theorem}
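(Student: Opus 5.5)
We argue via the standard saddle-point stability argument on the decomposition \eqref{eq:trefftz_sp_split}. Write $\up_\IT=(u_\IT,\Plift{w}u_\IT+p^0)$ with $u_\IT\in\IT_w^u$ and $p^0\in\IP^0$. Further split $u_\IT=\ur_\IT+u^\perp$, where $\ur_\IT\in\TR_w$ and $u^\perp$ lies in a complement of $\TR_w$ in $\IT_w^u$ on which $\blfdiv(\cdot,\cdot)$ against $\IP^0$ is controlled by \cref{lem:LBB_P0}. Since $\blfoseen{w}$ vanishes on pressure-only pairs, we get
\[
\blfoseen{w}(\up_\IT,\vq_\IT)=\blfoseenr{w}(u_\IT,v_\IT)+\blfdiv(v_\IT,p^0)+\blfdiv(u_\IT,q^0)
\]
for all $\vq_\IT=(v_\IT,\Plift{w}v_\IT+q^0)$. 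By \eqref{eq:plift_normequiv} the norm $\dgnorm{\up_\IT}$ is equivalent to $\|u_\IT\|_{\XR}+\nu^{-1/2}\|p^0\|_{0,h}$. This accounts for the constant $C$ in \eqref{eq:alpha_oseen_full}.

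\emph{Step 1: the $\IP^0$ component.} By \cref{lem:LBB_P0}, the operator $B:\IT_w^u\to(\IP^0)'$ induced by $\blfdiv$ is surjective, with a right inverse bounded by $\alpha_0^{-1}$. Hence we may choose $u^\perp$ with $Bu^\perp=Bu_\IT$ and $\|u^\perp\|_{1,h}\le\alpha_0^{-1}\sup_{q^0}\blfdiv(u_\IT,q^0)/\|q^0\|_{0,h}$. Then $\ur_\IT:=u_\IT-u^\perp\in\TR_w$.

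\emph{Step 2: three test functions.} For $p^0$, we take $v_p\in\IT_w^u$ from \cref{lem:LBB_P0} with $\blfdiv(v_p,p^0)\ge\alpha_0\|p^0\|_{0,h}\|v_p\|_{1,h}$ and normalize $\|v_p\|_{1,h}=\nu^{-1}\|p^0\|_{0,h}$. For $\ur_\IT$, we use coercivity (\cref{lem:coerc_IT_perp}) by testing with $(\ur_\IT,\Plift{w}\ur_\IT)$. Since $\ur_\IT\in\TR_w$, the term $\blfdiv(\ur_\IT,p^0)$ vanishes. For $u^\perp$, we test with $(0,q^0)$, choosing $q^0$ to realize $\blfdiv(u_\IT,q^0)=\blfdiv(u^\perp,q^0)\gtrsim\alpha_0\|u^\perp\|_{1,h}\|q^0\|_{0,h}$ with $\nu^{-1/2}\|q^0\|_{0,h}=\nu^{1/2}\|u^\perp\|_{1,h}$. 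The combined test function is
\[
\vq_\IT=(\ur_\IT,\Plift{w}\ur_\IT)+\theta_1(0,q^0)+\theta_2(v_p,\Plift{w}v_p).
\]

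\emph{Step 3: combining the bounds.} We expand $\blfoseen{w}(\up_\IT,\vq_\IT)$ term by term. The good terms are
\[
\staboseenr{w}\|\ur_\IT\|_{\XR}^2,\qquad \theta_1\alpha_0\|u^\perp\|_{\XR}^2,\qquad \theta_2\alpha_0\nu^{-1}\|p^0\|_{0,h}^2.
\]
The cross terms $\blfoseenr{w}(u^\perp,\ur_\IT)$, $\theta_2\blfoseenr{w}(u_\IT,v_p)$ and $\theta_2\blfdiv(u_\IT,0)$ are bounded using the continuity \eqref{eq:cont_blfoseenr}. The constant there is $\contoseenr{w}\sim(1+\contlocal{w}^2)\contoseen{w}$, which explains the factor appearing in \eqref{eq:alpha_oseen_full}. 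Young's inequality with $\theta_2\sim\alpha_0/((1+\contlocal{w}^2)\contoseen{w})$ then absorbs the cross terms. The continuity term $\blfoseenr{w}(u^\perp,\ur_\IT)$ is the delicate one. Instead of absorbing it with the coercive term, it may be better to estimate $u^\perp$ directly from $\blfdiv$ via Step 1 with $\theta_1$ large. Finally, the bound $\dgnorm{\vq_\IT}\lesssim\dgnorm{\up_\IT}$ follows from the normalizations and \eqref{eq:plift_normequiv}.

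\textbf{Main obstacle.} The key difficulty is balancing the weights so that the minimum in \eqref{eq:alpha_oseen_full} appears with only $\alpha_0^2$ in the numerator. I would first try the classical Brezzi-type bound $\staboseen{w}\gtrsim\min\{\staboseenr{w},\alpha_0^2/\contoseenr{w}\}$, which follows the abstract saddle-point stability lemma. Here the kernel coercivity constant is $\staboseenr{w}$, the inf-sup constant is $\alpha_0$, and the continuity constant is $\contoseenr{w}$. If a direct construction of the test function becomes messy, I would instead invoke that abstract result (e.g.\ the Babu\v{s}ka--Brezzi theory) on the product $\IT_w^u\times\IP^0$.
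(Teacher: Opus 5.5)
Your fallback is the paper's proof: the splitting \eqref{eq:trefftz_sp_split}, the identity $\blfoseen{w}(\up_\IT,\vq_\IT)=\blfoseenr{w}(u_\IT,v_\IT)+\blfdiv(v_\IT,p^0)+\blfdiv(u_\IT,q^0)$, the norm equivalence \eqref{eq:plift_normequiv}, kernel coercivity from \cref{lem:coerc_IT_perp}, the $\IP^0$-LBB condition from \cref{lem:LBB_P0}, continuity from \cref{lem:blfcont}, and then Babu\v{s}ka--Brezzi on $\IT_w^u\times\IP^0$. This correctly gives \emph{some} $\staboseen{w}>0$.

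The gap is the explicit constant \eqref{eq:alpha_oseen_full}, in which $\contoseenr{w}\simeq(1+\contlocal{w}^2)\contoseen{w}$ appears only to the first power. Put $x=\|\ur_\IT\|_{\XR}$ and $z=\nu^{-1/2}\|p^0\|_{0,h}=\|v_p\|_{\XR}$. In your Step~3 the term $\theta_2\blfoseenr{w}(\ur_\IT,v_p)\le\theta_2\contoseenr{w}\,xz$ can be split between $\tfrac14\staboseenr{w}x^2$ and $\tfrac12\theta_2\alpha_0z^2$ only if $\theta_2\lesssim\staboseenr{w}\alpha_0/\contoseenr{w}^2$. Your choice $\theta_2\sim\alpha_0/\contoseenr{w}$ therefore fails once $\contoseenr{w}\gg\staboseenr{w}$. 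The surviving pressure coefficient is then of order $\staboseenr{w}\alpha_0^2/\contoseenr{w}^2$. Absorbing $\blfoseenr{w}(u^\perp,\ur_\IT)$ likewise forces $\theta_1\gtrsim\contoseenr{w}^2/(\staboseenr{w}\alpha_0)$, which inflates $\dgnorm{\vq_\IT}$.

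The abstract theorem does not rescue the first-power form either. $\min\{\staboseenr{w},\alpha_0^2/\contoseenr{w}\}$ is what Brezzi's estimates give for a \emph{symmetric} positive semidefinite velocity form. For non-symmetric $a$ they give only $\staboseen{w}\gtrsim\staboseenr{w}\alpha_0^2/(\staboseenr{w}+\alpha_0+\contoseenr{w})^2$. Here the non-symmetry comes from the convective part, which is exactly what makes $\contoseen{w}$ large. This bound is sharp, as the following matrix shows:
\[
\mathcal{A}=\begin{pmatrix}\alpha & M & 0\\ -M & \alpha & \beta\\ 0 & \beta & 0\end{pmatrix}.
\]
Its velocity form is coercive on all of $\IR^2$ with constant $\alpha$, the kernel is $\Span\{e_1\}$, the LBB constant is $\beta$, and the continuity constant is $\simeq M$. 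Yet $(\mathcal{A}^{-1})_{33}=-(M^2+\alpha^2)/(\alpha\beta^2)$, so the inf-sup constant is at most $\alpha\beta^2/(M^2+\alpha^2)$, far below $\min\{\alpha,\beta^2/M\}$ for large $M$. Hence \eqref{eq:alpha_oseen_full} cannot follow from $\staboseenr{w}$, $\alpha_0$ and $\contoseenr{w}$ alone. The paper's proof, which just invokes Babu\v{s}ka--Brezzi and ``inserts the constants'', has the same defect; what both arguments actually establish is the quadratic bound.

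A cleaner direct route is available. The proof of \cref{lem:coerc_IT_perp} never uses $\blfdiv(u,q^0)=0$, only $\Div u=0$ elementwise and \eqref{eq:ch_nonneg}, so it holds verbatim on all of $\IT_w^u$. Test with $(u_\IT,\Plift{w}u_\IT-p^0)+\theta(v_p,\Plift{w}v_p)$, where $\theta=\staboseenr{w}\alpha_0/\contoseenr{w}^2$. The first part cancels both $\blfdiv$ terms and has the same $\dgnorm{\cdot}$-norm as $\up_\IT$. This avoids the splitting $u_\IT=\ur_\IT+u^\perp$ and the delicate cross term. It yields $\staboseen{w}\gtrsim\staboseenr{w}\min\{1,\alpha_0^2/\contoseenr{w}^2\}$, up to the norm-equivalence constant.
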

\begin{proof}
By \eqref{eq:trefftz_sp_split}, every $\up_\IT\in \IT_w$ can be written uniquely as
\[
\up_\IT=(u_\IT,\Plift{w} u_\IT+p_h^0),
\qquad
u_\IT\in \IT_w^u,\quad p_h^0\in \IP^0.
\]
With this splitting, the Oseen form on $\IT_w$ reads
\[
\blfoseen{w}(\up_\IT,\vq_\IT)
=
\blfoseenr{w}(u_\IT,v_\IT)
+
\blfdiv(v_\IT,p_h^0)
+
\blfdiv(u_\IT,q_h^0).
\]
Thus the full Oseen form on $\IT_w$ has the standard saddle-point structure with
velocity block $\blfoseenr{w}$ and coupling $\blfdiv(\cdot,\cdot)$ on $\IP^0$.

By \eqref{eq:plift_normequiv}, the DG norm $\dgnorm{\cdot}$ on $\IT_w$ is equivalent
to the natural product norm on $\IT_w^u\times \IP^0$.
The continuity of the full form follows from \cref{lem:blfcont}, coercivity on the
kernel is exactly \cref{lem:coerc_IT_perp}, and the inf-sup condition for the
piecewise-constant pressure part is \cref{lem:LBB_P0}. The classical
Babu\v{s}ka--Brezzi theorem therefore yields \eqref{eq:oseen_full_Trefftz_infsup}.
Inserting the corresponding continuity, coercivity, and inf-sup constants, and
using the norm equivalence \eqref{eq:plift_normequiv}, gives \eqref{eq:alpha_oseen_full}.
\end{proof}

\begin{theorem}[Inf-sup stability of the full coupled Trefftz form on $\IX$]
\label{thm:trefftz_full_infsup}
Assume \eqref{ass:wh} and \eqref{ass:A2}, and let $\lambda$ be chosen as in
\cref{lem:coerc_IT_perp}. Then there exists a constant $\stabtrefftz{w}>0$ such that
\begin{equation}\label{eq:trefftz_full_infsup}
\sup_{\z_h\in \IZ_w\setminus\{0\}}
\frac{\blftrefftz{w}(\up_h,\z_h)}{\|\z_h\|_{\IZ}}
\ge
\stabtrefftz{w}\,\dgnorm{\up_h}
\qquad\forall\,\up_h\in \IX.
\end{equation}
Moreover, one may take
\begin{equation}\label{eq:alpha_trefftz_full}
\stabtrefftz{w}
:=
C\staboseen{w}
\left(1+\frac{\contlocal{w}\contoseen{w}^2}{\stablocal{w}^2\staboseen{w}}
+\contlocal{w}\frac{\staboseen{w}}{\stablocal{w}^2}\right)^{-1}.
\end{equation}
where $\staboseen{w}$ is the constant from
\cref{thm:oseen_full_Trefftz_infsup}, and $C>0$ depends only on the norm-equivalence constants in \eqref{eq:plift_normequiv} 
and norm equicalence constant in $\dgsnorm{\vq}\lesssim\dgnorm{\vq}$ for all $\vq\in\IX_h$.
\end{theorem}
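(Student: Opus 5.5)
Given $\up_h\in\IX$, I will use the splitting $\up_h=\up_\IT+\up_\IL$ from \cref{cor:local_invertibility}, with $\up_\IT\in\IT_w$ and $\up_\IL\in\IL$, and build one test function $\z_h=(\vq_\IT,\delta\rs_h)\in\IZ_w$ that controls both components. This follows the abstract local--global argument of \cite[\S 2.5]{LLSV_ARXIV_2024}.

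\emph{Choice of the test components.} By \cref{thm:oseen_full_Trefftz_infsup}, I choose $\vq_\IT\in\IT_w$ with $\dgnorm{\vq_\IT}=\dgnorm{\up_\IT}$ and
\[
\blfoseen{w}(\up_\IT,\vq_\IT)\ge\staboseen{w}\dgnorm{\up_\IT}^2 .
\]
For the local part, let $\rs_h\in\IQ$ be the Riesz representative of $\oposeen{w}\up_\IL$. Since $\oposeen{w}\up_\IT=0$ by the Trefftz condition, we have $\langle\oposeen{w}\up_h,\rs_h\rangle_\Th=\|\oposeen{w}\up_\IL\|_{\IQ'}^2=\|\rs_h\|_\IQ^2$. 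By \cref{cor:AKw}, this quantity is at least $\stablocal{w}^2\dgnorm{\up_\IL}^2$.

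\emph{Expanding the form.} I will write
\[
\blftrefftz{w}(\up_h,\z_h)=\blfoseen{w}(\up_\IT,\vq_\IT)+\blfoseen{w}(\up_\IL,\vq_\IT)+\delta\|\oposeen{w}\up_\IL\|_{\IQ'}^2 .
\]
The cross term is bounded with \eqref{eq:cont_blfoseen} and the discrete equivalence $\dgsnorm{\up_\IL}\le C\dgnorm{\up_\IL}$:
\[
|\blfoseen{w}(\up_\IL,\vq_\IT)|\le C\contoseen{w}\dgnorm{\up_\IL}\dgnorm{\up_\IT}\le \frac{C\contoseen{w}}{\stablocal{w}}\|\oposeen{w}\up_\IL\|_{\IQ'}\dgnorm{\up_\IT}.
\]
Young's inequality then gives the lower bound
\[
\tfrac{\staboseen{w}}{2}\dgnorm{\up_\IT}^2+\Bigl(\delta-\tfrac{C\contoseen{w}^2}{2\staboseen{w}\stablocal{w}^2}\Bigr)\|\oposeen{w}\up_\IL\|_{\IQ'}^2 .
\]
I choose $\delta\sim 1+\contoseen{w}^2/(\staboseen{w}\stablocal{w}^2)$, so the bracket is at least $\delta/2$.

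\emph{Normalising by the test norm.} The test norm satisfies $\|\z_h\|_\IZ^2=\dgnorm{\up_\IT}^2+\delta^2\|\oposeen{w}\up_\IL\|_{\IQ'}^2$. The last factor is bounded above by $\contlocal{w}^2\dgsnorm{\up_\IL}^2$ via \eqref{eq:AKw_continuity}, and then by $C\contlocal{w}^2\dgnorm{\up_\IL}^2$ using the discrete norm equivalence. The lower bound controls $\dgnorm{\up_\IT}^2$ and $\stablocal{w}^2\dgnorm{\up_\IL}^2$. Dividing by $\|\z_h\|_\IZ$ and using $\dgnorm{\up_h}\le\dgnorm{\up_\IT}+\dgnorm{\up_\IL}$ yields \eqref{eq:trefftz_full_infsup}.

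\emph{Main obstacle.} The hard step is bookkeeping the constants so the result matches \eqref{eq:alpha_trefftz_full}, in particular the factors $\contlocal{w}\contoseen{w}^2/(\stablocal{w}^2\staboseen{w})$ and $\contlocal{w}\staboseen{w}/\stablocal{w}^2$. These arise from the ratio of the $\delta$-weighted lower bound to $\|\z_h\|_\IZ$, and from converting $\|\oposeen{w}\up_\IL\|$ back to $\dgnorm{\up_\IL}$. If a direct choice of $\delta$ does not reproduce exactly that form, I would instead invoke the abstract lemma of \cite[\S 2.5]{LLSV_ARXIV_2024} with the Trefftz inf-sup constant $\staboseen{w}$, the local stability $\stablocal{w}$, the local continuity $\contlocal{w}$ and the global continuity $\contoseen{w}$ as inputs. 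Any leftover generic factors would be absorbed into $C$.
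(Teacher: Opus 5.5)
Your argument is correct and is essentially the paper's proof. The paper simply invokes the abstract local--global result of \cite[\S 2.5]{LLSV_ARXIV_2024} with exactly your four inputs (\cref{cor:AKw}, \eqref{eq:AKw_continuity}, \cref{thm:oseen_full_Trefftz_infsup}, \cref{lem:blfcont}), and your test function $\z_h=(\vq_\IT,\delta\rs_h)$ is the unpacked version of that lemma, mirroring the proof of \cref{th:Khinfsup}. To land exactly on \eqref{eq:alpha_trefftz_full} rather than a variant with $\contlocal{w}$ in place of $\contlocal{w}\staboseen{w}/\stablocal{w}^2$, take $\delta\simeq \contoseen{w}^2/(\staboseen{w}\stablocal{w}^2)+\staboseen{w}/\stablocal{w}^2$; then $\delta\stablocal{w}^2\ge\staboseen{w}$, and $1+\delta\contlocal{w}$, which bounds your test-norm factor, is precisely the bracket in \eqref{eq:alpha_trefftz_full}.
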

\begin{proof}
We apply the theory of coupled Trefftz problems from \cite[\S 2.5]{LLSV_ARXIV_2024} (and also \cite{SV_ARXIV_2026} for explicit constants).
We satisfy the assumptions of the theorem as follows by the results we have established so far:
lower bound on the local operator \cref{cor:AKw}, 
continuity of the local operator from \cref{lem:AKw_continuity}, 
inf-sup stability of the global bilinear form \cref{thm:oseen_full_Trefftz_infsup},
continuity of the global bilinear from \cref{lem:blfcont}. 
This proves \eqref{eq:trefftz_full_infsup}.
\end{proof}

\subsection{A priori error estimates}

To prove a C\'ea-type error estimate and a priori error estimates, we need to assume an improved consistency bound for the convection term. 

\begin{theorem}[A priori error estimate]\label{thm:oseen_apriori}
  Assume \eqref{ass:wh}, \eqref{ass:A2}, and \cref{ass:ch}.
  Let $(u,p)$ be a smooth solution to the Oseen problem with
  $u \in [H^{k+1}(\Th)]^d$ and $p \in H^\ell(\Th)$, and let $\ur_h\in\XR$
  be the solution of \eqref{eq:redtdgoseen}.
  Then for $1 \le \ell \le k$,
  \begin{equation}\label{eq:oseen_apriori}
    \|u-\ur_h\|_{\XR}
    \le
    \left(1+\frac{C_{\rm app}(w)}{\stabtrefftzr{w}}\right)
    \inf_{\substack{\vr_h\in\XR\\ \Pi^0\vr_h=\Pi^0 u}}
    \|u-\vr_h\|_{\XR,\ast}
    +
    \frac{C_p}{\stabtrefftzr{w}}\nu^{-1/2}h^\ell\|p\|_{H^\ell(\Th)},
  \end{equation}
  with
  \[
  C_{\rm app}(w) :=
  \contvis + (2\contdiv+1)\contlocal{w} + C_{t,\ast}|w|_{h,d},
  \qquad C_p := C(1+\contdiv),
  \]
  where $C>0$ depends only on $k$, $d$, and the shape-regularity.
  In particular for $k\geq2$, with the BDM interpolant $\vr_h = \Pi^{\BDM}_{k} u \in \XR$, we have that $\Pi^0(\Pi_k^{\BDM}u)=\Pi^0 u$ elementwise and therefore
  \begin{equation}\label{eq:oseen_apriori_BDM}
    \norm{u - \ur_h}_{\XR}
    \;\lesssim\;
    h^\ell \left( \nu^{\frac12}  |u|_{H^{\ell+1}(\Th)}
    \;+\;
    \nu^{-\frac12} \|p\|_{H^\ell(\Th)} \right).
  \end{equation}
\end{theorem}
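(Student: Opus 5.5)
The plan is a Strang-type argument built on the reduced inf-sup estimate of \cref{th:Khinfsup}. Fix any $\vr_h\in\XR$ with $\Pi^0\vr_h=\Pi^0 u$ and split the error as $u-\ur_h=(u-\vr_h)+(\vr_h-\ur_h)$. The first part is bounded trivially, since $\|\cdot\|_{\XR}\le\|\cdot\|_{\XR,\ast}$. This step produces the leading $1$ in the constant. For the discrete part $e_h:=\vr_h-\ur_h\in\XR$, \eqref{eq:Khr_infsup} gives
\[
\stabtrefftzr{w}\|e_h\|_{\XR}\le \sup_{\zr_h\in\ZR\setminus\{0\}} \frac{\blftrefftzr{w}(\vr_h-\ur_h,\zr_h)}{\|\zr_h\|_{\ZR}}.
\]
It therefore remains to bound the consistency functional $\zr_h\mapsto\blftrefftzr{w}(\vr_h,\zr_h)-\rmod{\ell}_h(\zr_h)$, with $\zr_h=(\vr_\IT,\rr_h)\in\ZR$.

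The consistency step uses the exact solution. Because $u$ is smooth and $\Div u=0$, inserting $(u,p)$ into $\blfoseen{w}$ and testing with $(v_\IT,\Plift{w}v_\IT)$ gives $\blfoseen{w}((u,p),(v_\IT,\Plift{w}v_\IT))=(f,v_\IT)_\Th$ by the standard SIPG and convection consistency. The local equation also holds exactly: $\langle\oposeenr{w}u,\rr_h\rangle_\Th=(h\nu^{-1/2}f,\rr_h)_\Th$. Rewriting $\rmod{\ell}_h$, the term $-\blfdiv(\vr_\IT,\Pi_\nabla f)$ must be matched by the pressure. Here I use $\Pi_\nabla f=\Pi_\nabla(-\nu\Delta u+w\cdot\nabla u+\nabla p)$. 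Since $\Pi_\nabla$ acting on $[\IP^{k-2}]^d$ agrees with $\Pi_\nabla\Pi^{k-2}$, this gives $\Pi_\nabla f=-\Plift{w}u+\Pi_\nabla\nabla p$, up to the difference between $\Pi^{k-2}$ and the full projection, which I track as part of the pressure residual. The consistency functional then collapses to
\[
\blfoseenr{w}(\vr_h-u,\vr_\IT)+\langle\oposeenr{w}(\vr_h-u),\rr_h\rangle_\Th+\blfdiv(\vr_\IT,p-\Pi_\nabla p-\Pi^0p)+\blfdiv(\vr_\IT,\Pi^0 p),
\]
where the extra pressure terms come from writing $p$ against its discrete part. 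The last term vanishes because $\vr_\IT\in\TR_w$. In the remaining pressure term I use that $\Pi_\nabla\nabla p$ is a near-best discrete approximation of $p$ up to constants. Standard approximation estimates together with continuity of $\blfdiv$ then bound it by $C(1+\contdiv)\nu^{-1/2}h^\ell\|p\|_{H^\ell(\Th)}\|\vr_\IT\|_{\XR}$, which yields $C_p$.

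The velocity terms are bounded as follows. The local term is handled by \eqref{eq:AKrw_cont_coer}, giving $\contlocal{w}\|u-\vr_h\|_{\XR,\ast}\|\rr_h\|_{\QR}$. In $\blfoseenr{w}(\vr_h-u,\vr_\IT)$ I expand via the definition: the diffusion part gives $\contvis$, and the two $\blfdiv$ terms involving $\Plift{w}$ are controlled through \eqref{eq:plift_norm}, giving $2\contdiv\contlocal{w}$. For the convection part I apply \eqref{eq:ch_mixed_L4} with $z=u-\vr_h$. This is admissible because $\Pi^0(u-\vr_h)=0$ elementwise gives the Poincaré property, and it produces $C_{t,\ast}|w|_{h,d}$ instead of $|w|_{1,4,h}/\nu$. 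This is the reason the infimum carries the mean constraint.

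The main obstacle is the pressure bookkeeping: checking that $\Pi_\nabla f$ combined with $\Plift{w}(\vr_h-u)$ exactly cancels the discrete pressure pieces, given that $\Plift{w}$ involves $\Pi^{k-2}$ applied to $w\cdot\nabla u$. If an extra term appears, I would absorb it into the $\contlocal{w}$ contribution, which accounts for the "$+1$" in $(2\contdiv+1)$. For \eqref{eq:oseen_apriori_BDM}, $\Pi^{\BDM}_k u$ is divergence-free elementwise, preserves normal-flux moments, and hence lies in $\XR$ with the correct elementwise means. Standard BDM approximation in $\|\cdot\|_{1,h,\ast}$ then finishes the proof.
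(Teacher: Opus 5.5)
Your route is the paper's: the triangle inequality, the reduced inf-sup estimate of \cref{th:Khinfsup} applied to $\vr_h-\ur_h$, and consistency with the exact solution. The velocity terms are also handled the same way: $\contvis$; $2\contdiv\contlocal{w}$ from the two lifted-pressure $\blfdiv$ terms via \eqref{eq:plift_norm}; $C_{t,\ast}|w|_{h,d}$ from \eqref{eq:ch_mixed_L4} with $z=u-\vr_h$; and $\contlocal{w}$ from \eqref{eq:AKrw_cont_coer} on the local term. Your treatment of the global pressure term $\blfdiv(\vr_\IT,p-\Pi^0p-\Pi_\nabla\nabla p)$ is fine (you wrote $\Pi_\nabla p$).

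There is, however, one false step: the local equation does \emph{not} hold exactly for the exact solution. By the PDE,
\[
\langle\oposeenr{w}u,\rr_h\rangle_\Th=\bigl(h\nu^{-\frac12}(f-\nabla p),\rr_h\bigr)_\Th .
\]
Here $\QR=\ker\Pi_\nabla$ annihilates only \emph{discrete} gradients $\nabla\IP^{k-1}$, not $\nabla p$ for a general $p\in H^\ell(\Th)$. For $k\ge3$ the projection $\Pi^{k-2}\nabla p$ is in general not a polynomial gradient, so $(\nabla p,\rr_h)_\Th\neq0$. Your collapsed consistency functional therefore misses the term $-(h\nu^{-\frac12}\nabla p,\rr_h)_\Th$.

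The repair is the one used in the paper. Since $\rr_h\perp\nabla\IP^{k-1}$ elementwise, this term equals $(h\nu^{-\frac12}\nabla(q_h-p),\rr_h)_\Th$ for any $q_h\in\IP^{k-1}$. It is therefore bounded by $C\nu^{-\frac12}h^\ell\|p\|_{H^\ell(\Th)}\|\rr_h\|_{\QR}$. This is exactly where the ``$1$'' in $C_p=C(1+\contdiv)$ comes from. The $\blfdiv$ pressure term alone only produces $C\contdiv$, so your attribution of all of $C_p$ to it does not hold up.

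Your ``main obstacle'' paragraph looks for the extra term in the wrong place. The identity $\Pi_\nabla f=-\Plift{w}u+\Pi_\nabla\nabla p$ is exact, because $\nabla\IP^{k-1}\subset[\IP^{k-2}]^d$ gives $\Pi_\nabla=\Pi_\nabla\circ\Pi^{k-2}$ on all of $[L^2(\Th)]^d$. No residual arises there. Likewise, the ``$+1$'' in $(2\contdiv+1)\contlocal{w}$ is already accounted for by your bound on $\langle\oposeenr{w}(\vr_h-u),\rr_h\rangle_\Th$. The term that is genuinely missing is a pressure consistency error. It cannot be absorbed into a velocity constant and must go into the $h^\ell\|p\|_{H^\ell(\Th)}$ part of the estimate.
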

\begin{proof}
Let $\vr_h \in \XR$ be such that $e_h:=u-\vr_h\in\IX_{\rm ps}(\Th)$.
By the triangle inequality and the inf-sup bound \eqref{eq:Khr_infsup} applied to $\ur_h - \vr_h$,
\[
  \norm{u - \ur_h}_{\XR}
  \le \norm{u - \vr_h}_{\XR} + \norm{\vr_h - \ur_h}_{\XR},
  \qquad
  \stabtrefftzr{w}\norm{\vr_h - \ur_h}_{\XR}
  \le \sup_{\zr_h \ne 0}
  \frac{\blftrefftzr{w}(\ur_h - \vr_h,\, \zr_h)}{\|\zr_h\|_{\ZR}}.
\]
Writing $\zr_h = (\vr_\IT, \rr_h) \in \ZR$, we split
\[
  \blftrefftzr{w}(\ur_h - \vr_h, \zr_h)
  = \underbrace{\blfoseenr{w}(\ur_h - \vr_h, \vr_\IT)}_{=:I}
  + \underbrace{\langle \oposeenr{w}(\ur_h - \vr_h), \rr_h\rangle_\Th}_{=:II}.
\]

\emph{Bound for $I$.}
For all $p_h^0 \in \IP^0$, by consistency of the DG bilinear form and \eqref{eq:redtdgoseen},
\begin{align*}
  \blfoseen{w}((u,p-p_h^0),(\vr_\IT,\Plift{w} \vr_\IT))
  &= (f,\vr_\IT)_\Th
  = \blfoseen{w}((\ur_h,\Plift{w} \ur_h + \Pi_\nabla f),(\vr_\IT,\Plift{w} \vr_\IT))
  \quad\forall \vr_\IT \in \TR_w.
\end{align*}
Setting $\tilde p := p - p_h^0 - \Pi_\nabla f$ with $p_h^0 = \Pi^0 p$ (so $\Pi^0\tilde p = 0$),
\begin{equation}\label{eq:consistency_star}\tag{$\ast$}
  \blfoseen{w}((u - \ur_h,\, \tilde p - \Plift{w} \ur_h),(\vr_\IT,\Plift{w} \vr_\IT)) = 0
  \quad\forall \vr_\IT \in \TR_w.
\end{equation}
Applying \eqref{eq:consistency_star}, we obtain
\[
  I
  = \blfoseen{w}((\ur_h-\vr_h,\Plift{w}(\ur_h-\vr_h)),(\vr_\IT,\Plift{w}\vr_\IT))
  = \blfoseen{w}((e_h,\tilde p-\Plift{w}\vr_h),(\vr_\IT,\Plift{w}\vr_\IT)).
\]
Using the decomposition of $\blfoseen{w}$, continuity of $\blfvis$, \cref{ass:ch}, continuity of $\blfdiv$, and
\[
\nu^{-\frac12}\|\Plift{w}\vr_\IT\|_{0,h}
\le
\contlocal{w}\,\|\vr_\IT\|_{\XR},
\]
we infer
\begin{align*}
|I| &\le |\blfvis(e_h,\vr_\IT)| + |\blfconv{w}(e_h,\vr_\IT)| + |\blfdiv(e_h,\Plift{w}\vr_\IT)| + |\blfdiv(\vr_\IT,\tilde p-\Plift{w}\vr_h)| \\
&\le \Bigl( \contvis + C_{t,\ast}|w|_{h,d} + \contdiv\,\contlocal{w} \Bigr)\|e_h\|_{\XR,\ast}\,\|\vr_\IT\|_{\XR} + |\blfdiv(\vr_\IT,\tilde p-\Plift{w}\vr_h)|.
\end{align*}
We bound the pressure term
\[
\nu^{-\frac12}\|\tilde p - \Plift{w}\vr_h\|_{0,h}
= \nu^{-\frac12}\|h\nabla(\tilde p - \Plift{w}\vr_h)\|_{\Th},
\]
by splitting at $\Plift{w}u$:
\[
  \nu^{-\frac12}\|\tilde p - \Plift{w}\vr_h\|_{0,h}
  \le \underbrace{\nu^{-\frac12}\|h\nabla(p - \Pi_\nabla f - \Plift{w} u)\|_\Th}_{=:I_a}
  + \underbrace{\nu^{-\frac12}\|h\nabla\Plift{w}(u-\vr_h)\|_\Th}_{=:I_b}.
\]
For $I_b$: since $\Plift{w} = -\Pi_\nabla\opcd{w}$ (see \eqref{eq:pressure_lifting}),
\[
I_b = \nu^{-\frac12}\|h\Pi_\nabla\opcd{w}(u-\vr_h)\|_\Th
\le \contlocal{w}\norm{u-\vr_h}_{\XR,\ast}.
\]
For $I_a$: using $\opcd{w}u = \Pi^{k-2}(f - \nabla p)$, \eqref{eq:pressure_lifting}, and $\Pi_\nabla\circ\Pi^{k-2} = \Pi_\nabla$, we find
\[
  \Pi_\nabla f + \Plift{w} u
  = \Pi_\nabla f - \Pi_\nabla\opcd{w}u
  = \Pi_\nabla f - \Pi_\nabla(f - \nabla p)
  = \Pi_\nabla(\nabla p).
\]
Hence
\[
  \nabla(p - \Pi_\nabla f - \Plift{w} u)
  = \nabla p - \Pi_\nabla(\nabla p),
\]
which gives
\[
I_a \lesssim \nu^{-\frac12}\|h(\nabla p - \Pi_\nabla \nabla p)\|_\Th
\lesssim \nu^{-\frac12}h^\ell\|p\|_{H^\ell(\Th)},
\]
for $1\le\ell\le k$ by a standard Bramble--Hilbert argument.
Therefore,
\begin{align*}
|\blfdiv(\vr_\IT,\tilde p-\Plift{w}\vr_h)|
&\le \contdiv \|\vr_\IT\|_{\XR}\,\nu^{-\frac12}\|\tilde p-\Plift{w}\vr_h\|_{0,h} \\
&\le \contdiv\Big( \contlocal{w}\|e_h\|_{\XR,\ast} + C\,\nu^{-\frac12}h^\ell\|p\|_{H^\ell(\Th)} \Big)\|\vr_\IT\|_{\XR},
\end{align*}
and hence,
\begin{equation}\label{eq:I_bound_apriori}
|I| \le
\Big( \bigl[\contvis + 2\contdiv\,\contlocal{w} + C_{t,\ast}|w|_{h,d}\bigr]\|e_h\|_{\XR,\ast} + C\,\contdiv\,\nu^{-\frac12}h^\ell\|p\|_{H^\ell(\Th)} \Big)\|\zr_h\|_{\ZR}.
\end{equation}

\emph{Bound for $II$.}
The local equation \eqref{eq:tdgoseen:2} and the PDE itself yield for $\rr_h \in \QR$:
\begin{align*}
  &\langle \oposeenr{w}u, r_h \rangle_\Th 
   =
  \langle h \nu^{-\frac12} (-\nu \Delta u + w\cdot\nabla u), r_h \rangle_\Th 
  =
  \langle -h \nu^{-\frac12} \nabla p, r_h \rangle_\Th + \langle h \nu^{-\frac12} f, r_h \rangle_\Th \\
  \stackrel{\eqref{eq:tdgoseen:2}}{\Longrightarrow} \
  &\langle \oposeenr{w}(u-\ur_h), r_h \rangle_\Th 
   =
  \langle -h \nu^{-\frac12} \nabla p, r_h \rangle_\Th
  =
  \langle h \nu^{-\frac12} \nabla (q_h-p), r_h \rangle_\Th,
\end{align*}
for any $q_h \in \IP^{k-1}$ where in the last step we exploited that $\QR$ is orthogonal to discrete gradients ($\rr_h\perp\nabla\IP^{k-1}(T)$).
Applying that result and continuity from \eqref{eq:AKrw_cont_coer} to $II$, we obtain:
\begin{align*}
  II & = \langle\oposeenr{w}(\ur_h-\vr_h),\rr_h\rangle_\Th
  = \langle\oposeenr{w}(u-\vr_h),\rr_h\rangle_\Th + \langle h \nu^{-\frac12} \nabla (q_h-p), r_h \rangle_\Th \\
  & \le  \|\oposeenr{w}(u-\vr_h)\|_{\QR'} \,\|\rr_h\|_{\QR} + \nu^{-\frac12} \inf_{q_h\in\IP^{k-1}} \norm{h_{\calT}(p-q_h)}_{H^1(\Th)} \,\|\rr_h\|_{\QR} \\
  & \le \contlocal{w}\norm{u-\vr_h}_{\XR,\ast}\,\|\zr_h\|_{\ZR} + C \nu^{-\frac12} h^\ell \Vert p \Vert_{H^\ell(\Th)}\,\|\zr_h\|_{\ZR}.
\end{align*}

\emph{Assembly.}
Combining \eqref{eq:I_bound_apriori} with the estimate for $II$, dividing by $\|\zr_h\|_{\ZR}$, yields
\[
  \stabtrefftzr{w}\norm{\vr_h - \ur_h}_{\XR}
  \le C_{\rm app}(w)\norm{u-\vr_h}_{\XR,\ast}
  + C_p\,\nu^{-\frac12}h^\ell\|p\|_{H^\ell(\Th)}.
\]
The triangle inequality gives
\[
\norm{u-\ur_h}_{\XR}
\le
\left(
1+\frac{C_{\rm app}(w)}{\stabtrefftzr{w}}
\right)\norm{u-\vr_h}_{\XR,\ast}
+
\frac{C_p}{\stabtrefftzr{w}}\,
\nu^{-\frac12}h^\ell\|p\|_{H^\ell(\Th)}.
\]
Taking the infimum over all $\vr_h\in\XR$ with $u-\vr_h\in\IX_{\rm ps}(\Th)$ proves \eqref{eq:oseen_apriori}.

For \eqref{eq:oseen_apriori_BDM}, the standard BDM estimate gives 
$\Pi^{\BDM}_k u \in  [\IP^k]^d$ with
$\Pi^0(\Pi_k^{\BDM}u)=\Pi^0 u$ elementwise for $k\ge 2$,
$\Div \Pi^{\BDM}_k u|_\Th = 0$ (for $\Div u = 0$) and $\jmp{\Pi^{\BDM}_k u \cdot n} = 0$ on all $F \in \Fh$. Further,
$\norm{u-\Pi^{\BDM}_k u}_{\XR,\ast} = \nu^{\frac12}\vdgsnorm{u-\Pi^{\BDM}_k u} \lesssim \nu^{\frac12}h^k|u|_{H^{k+1}(\Th)}.$
Inserting this choice into \eqref{eq:oseen_apriori} yields \eqref{eq:oseen_apriori_BDM}.
\end{proof}

\begin{lemma}[Pressure error bound]\label{lem:pressure_error}
Assume \eqref{ass:wh} and \eqref{ass:A2}, and in addition
$w\in [W^{\ell_2,\infty}(\Th)]^d$ for 
$1\le\ell_2\le k$.
Let $(u,p)$ be a Oseen solution with $p\in H^\ell(\Th)$ and let
$(u_h,p_h)\in\IX$ be the solution of \eqref{eq:tdgoseen}.
Then for $0\le\ell_1\le k-1$,
\begin{align}\label{eq:pressure_error}
  \nu^{-\frac12}\|p - p_h\|_{0,h}
  \;\lesssim\;
   \norm{u - u_h}_{\XR}
  \;&+\;
  \nu^{-\frac12} h^{\ell_1+1}\|f\|_{H^{\ell_1}(\Th)}
  \\ &+\; \nonumber
  \left( \nu^{\frac12} + \nu^{-\frac12}|w|_{W^{\ell_2,\infty}(\Th)} h\right)
  h^{\ell_2}\|u\|_{H^{\ell_2+1}(\Th)},
\end{align}
where the hidden constant depends on $\contvis$, $\contdiv$, $\contlocal{w}$, $\alpha_0$, and 
on $C_{t,\ast}|w|_{h,d}$.
\end{lemma}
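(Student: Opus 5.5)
We split the discrete pressure into the pieces identified in the proof of \cref{cor:uniquesol} and estimate each piece separately. That proof shows that the solution of \eqref{eq:tdgoseen} has the form $p_h = \Plift{w} u_h + \Pi_\nabla f + p_h^0$ with $p_h^0\in\IP^0$. Since $\Pi^0$ of the first two terms vanishes, the exact pressure should be split the same way. We write
\[
p - p_h = \bigl(p - \Pi^0 p - \Pi_\nabla f - \Plift{w} u\bigr) + \Plift{w}(u-u_h) + \bigl(\Pi^0 p - p_h^0\bigr),
\]
and bound the three terms in turn. The first two have elementwise mean zero, so only their $\|h\nabla\cdot\|_\Th$ part of $\|\cdot\|_{0,h}$ enters. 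The last term is piecewise constant and will be handled by the LBB estimate \cref{lem:LBB_P0}.

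\emph{First term.} As in the bound for $I_a$ in the proof of \cref{thm:oseen_apriori}, we have $\Pi_\nabla f + \Plift{w} u = \Pi_\nabla(\nabla p)$. Its contribution is therefore $\nu^{-\frac12}\|h(\nabla p - \Pi_\nabla\nabla p)\|_\Th$. To obtain the stated right-hand side in terms of $f$ and $u$ rather than $p$, we do not use Bramble--Hilbert on $p$. Instead we substitute $\nabla p = f + \nu\Delta u - w\cdot\nabla u$. Then $\nabla p - \Pi_\nabla\nabla p$ is bounded by the $L^2$ best-approximation errors of $f$, $\nu\Delta u$ and $w\cdot\nabla u$ onto discrete gradients. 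The key point is that $\nabla p$ itself is a gradient, and the projection is taken onto gradients, so the error is controlled by $\|(\id-\Pi^{k-2})\cdot\|$ of each summand plus a harmless term. The $f$ part gives $h^{\ell_1+1}\|f\|_{H^{\ell_1}}$. The $\nu\Delta u$ part gives $\nu^{\frac12}h^{\ell_2}\|u\|_{H^{\ell_2+1}}$ after multiplying by $h\nu^{-\frac12}$. The $w\cdot\nabla u$ part is handled with a product (Leibniz) estimate in $W^{\ell_2,\infty}\times H^{\ell_2}$ and gives $\nu^{-\frac12}|w|_{W^{\ell_2,\infty}}h^{\ell_2+1}\|u\|_{H^{\ell_2+1}}$.

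\emph{Second term.} Exactly as for $I_b$, $\nu^{-\frac12}\|h\nabla\Plift{w}(u-u_h)\|_\Th \le \contlocal{w}\|u-u_h\|_{\XR,\ast}$. The difficulty is that the target bound has $\|u-u_h\|_{\XR}$, not the starred norm. We would handle this by inserting an interpolant $\vr_h$ (e.g.\ BDM) and using the triangle inequality. The discrete part $\vr_h - u_h$ is controlled by norm equivalence on $[\IP^k]^d$, and $u-\vr_h$ is controlled by its interpolation error, which is absorbed into the $\nu^{\frac12}h^{\ell_2}\|u\|_{H^{\ell_2+1}}$ term.

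\emph{Third term.} Apply \cref{lem:LBB_P0} to $\Pi^0p - p_h^0$: choose $u_\IT\in\IT_w^u$ achieving the supremum and test the consistency identity with $(u_\IT,\Plift{w}u_\IT)$. Subtracting the discrete equation (the computation leading to \eqref{eq:consistency_star}, but now without the restriction to $\TR_w$) gives
\[
\blfdiv(u_\IT,\Pi^0p - p_h^0) = -\blfoseen{w}\bigl((u-u_h,\,\tilde p - \Plift{w}u_h),(u_\IT,\Plift{w}u_\IT)\bigr) + \blfdiv(u_\IT,\ldots),
\]
which we bound by continuity of $\blfvis$, $\blfdiv$ and $\blfconv{w}$. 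For the convective part we use \eqref{eq:ch_mixed_L4}, applied to $u-\vr_h$ with a mean-preserving interpolant, so that only $C_{t,\ast}|w|_{h,d}$ enters and not $|w|_{1,4,h}$. Dividing by $\alpha_0\|u_\IT\|_{1,h}$ bounds this term by the same quantities as the first two.

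\textbf{Main obstacle.} The delicate step is the first term: obtaining orders $\ell_1$ and $\ell_2$ in $f$ and $u$ from the gradient-projection error, since $\Pi_\nabla$ does not commute nicely with the substitution.
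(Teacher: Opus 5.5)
Your overall architecture matches the paper's. You split off the $\IP^0$ part and control it through \cref{lem:LBB_P0} with Trefftz test functions, and you bound the mean-free part through $\|h\nabla\cdot\|_\Th$. Your interpolant argument for passing from $\|u-u_h\|_{\XR,\ast}$ to $\|u-u_h\|_{\XR}$ is more careful than the paper's text.

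The genuine gap is the step you yourself call the main obstacle, which the proposal leaves unresolved. Your first claim there does not help. By the triangle inequality, $\|(\id-\nabla\Pi_\nabla)\nabla p\|_T$ is bounded by the distances of $f$, $\nu\Delta u$ and $w\cdot\nabla u$ to $\nabla\IP^{k-1}(T)$. But none of these fields is a gradient, so these distances in general decay only like $h_T$ (a nonzero curl obstructs already at the linear level). Hence $h^{\ell_1+1}$ with $\ell_1\ge 2$ cannot come out of them. Your fallback, ``$\|(\id-\Pi^{k-2})\cdot\|$ of each summand plus a harmless term'', hides exactly the piece that needs an argument. In
\[
\nabla p-\nabla\Pi_\nabla\nabla p=(\id-\Pi^{k-2})\nabla p+(\id-\nabla\Pi_\nabla)\Pi^{k-2}\nabla p
\]
the second summand is the non-gradient part of $\Pi^{k-2}\nabla p$, and nothing in your proposal bounds it.

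The missing idea is this. Your representation $p_h=\Plift{w}u_h+\Pi_\nabla f+p_h^0$ encodes only the gradient component of the local equation \eqref{eq:tdgoseen:2}. Its $\QR$-component says that $\Pi^{k-2}f-\opcd{w}u_h$ is itself a discrete gradient, i.e.\ $\nabla p_h=\Pi^{k-2}f-\opcd{w}u_h$ holds exactly. The paper uses this identity directly, together with $\nabla p=f+\nu\Delta u-w\cdot\nabla u$, to get
\[
\nabla(p-p_h)=(\id-\Pi^{k-2})(f+\nu\Delta u-w\cdot\nabla u)-\Pi^{k-2}\bigl(-\nu\Delta(u-u_h)+w\cdot\nabla(u-u_h)\bigr).
\]
So only $\Pi^{k-2}$-approximation errors and a term bounded by $\contlocal{w}\|u-u_h\|_{\XR,\ast}$ appear, and $\Pi_\nabla$ never enters. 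In your grouping, the same identity gives
\[
(\id-\nabla\Pi_\nabla)\Pi^{k-2}\nabla p=-(\id-\nabla\Pi_\nabla)\Pi^{k-2}\bigl(-\nu\Delta(u-u_h)+w\cdot\nabla(u-u_h)\bigr).
\]
This term belongs with your second term, and with it your argument closes. Alternatively, a compactness argument on the reference element shows $\|(\id-\nabla\Pi_\nabla)g\|_T\lesssim\|(\id-\Pi^{k-2})g\|_T$ for every gradient $g$. That is what your ``key point'' would need, but it has to be proved.

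Two smaller points:
\begin{enumerate}
\item Your claim that only $C_{t,\ast}|w|_{h,d}$ enters the convective term fails for the discrete remainder $\blfconv{w}(\vr_h-u_h,u_\IT)$. Its first argument is not elementwise mean-free, so \eqref{eq:ch_mixed_L4} does not apply and \eqref{eq:ch_cont} brings in $|w|_{1,4,h}$. The paper is equally terse here.
\item Your Leibniz step requires $\ell_2\le k-1$, since $\id-\Pi^{k-2}$ gains at most $h^{k-1}$. The paper's proof actually produces $h^{\ell_2}|w|_{W^{\ell_2-1,\infty}}\|u\|_{H^{\ell_2}}$ at that point.
\end{enumerate}
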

\begin{proof}
Split $p - p_h = (\tilde p - \tilde p_h) + (p^0 - p_h^0)$
with $p^0 := \Pi^0 p$, $\tilde p := p - p^0$, and 
$p_h^0 = \Pi^0 p_h$. 
With the local problem, we find $\tilde p_h = \Plift{w} u_h + \Pi_\nabla f$.
From the Trefftz property and the PDE we get
\begin{align*}
\nabla p_h & = \Pi^{k-2} f - \Pi^{k-2} \opcd{w} u_h, \quad \nabla p = f + \nu \Delta u - w \cdot \nabla u,
\end{align*}
which implies
\begin{align*}
& \Vert h \nabla (p - p_h) \Vert_\Th \\
& \leq \norm{ h (\id - \Pi^{k-2})f}_\Th + \norm{ h (\id - \Pi^{k-2})(-\nu \Delta u + w\cdot \nabla u)}_\Th
 + \norm{ h \opcd{w}(u-u_h)}_\Th \\
 &
 \leq C h^{\ell_1+1} \Vert f \Vert_{H^{\ell_1}(\Th)}
 + C \nu  h^{\ell_2} \Vert u \Vert_{H^{\ell_2+1}(\Th)}
 + C |w|_{W^{\ell_2-1,\infty}(\Th)} h^{\ell_2} \Vert u \Vert_{H^{\ell_2}(\Th)}
 + \contlocal{w} \nu^{\frac12} \norm{ u-u_h}_{\XR},
\end{align*}
where we used the local approximation property of $\Pi^{k-2}$ and
\[
\|(\id-\Pi^{k-2})(w\cdot\nabla u)\|_{T}
\lesssim
h_T^{\ell_2-1}\,|w\cdot\nabla u|_{H^{\ell_2-1}(T)}
\lesssim
h_T^{\ell_2-1}\,|w|_{W^{\ell_2-1,\infty}(T)}\,\|u\|_{H^{\ell_2}(T)},
\]
for $0 \le \ell_1 \le k-1$, $1 \le \ell_2 \le k$. 

It remains to bound the $\IP^0$-contribution.
By \eqref{eq:lbb_P0} we get
\[
\alpha_0 \|p_h^0-p^0\|_{0,h}
\le
\sup_{v_\IT\in\IT_w^u\setminus\{0\}}
\frac{\blfdiv(v_\IT,p_h^0-p^0)}{\|v_\IT\|_{1,h}}.
\]
Moreover we have the splitting
\begin{align*}
\blfdiv(v_\IT,p_h^0-p^0)
&=
\blfdiv(v_\IT,p_h-p)+\blfdiv(v_\IT,\tilde p-\tilde p_h).
\end{align*}
The second term is bounded by
\[
|\blfdiv(v_\IT,\tilde p-\tilde p_h)|
\le
\contdiv \|v_\IT\|_{1,h}\,\nu^{-\frac12}\|\tilde p-\tilde p_h\|_{0,h}.
\]
For the first term, using the error identity for Trefftz test functions, together with the continuity of $\blfvis$ and, if desired, \cref{ass:ch} for the convection part, we obtain
\[
|\blfdiv(v_\IT,p_h-p)|
\le
C\Big(\|u-u_h\|_{\XR,\ast}
+\nu^{-\frac12}\|\tilde p-\tilde p_h\|_{0,h}\Big)\|v_\IT\|_{1,h},
\]
with $C$ depending on $\contvis$, $\contdiv$, $\contlocal{w}$, and, under \cref{ass:ch}, on $C_{t,\ast}|w|_{h,d}$.
Therefore,
\[
\|p_h^0-p^0\|_{0,h}
\le
C\alpha_0^{-1}
\Big(
\|u-u_h\|_{\XR,\ast}
+
\nu^{-\frac12}\|\tilde p-\tilde p_h\|_{0,h}
\Big).
\]
\end{proof}

\section{Numerical results}\label{sec:numerics}

In the numerical examples we choose the discrete trilinear form $\blfconv{w}$ as a discrete counterpart of Temam's modification given in \eqref{eq:ch_temam}.
For the implementation of the methods we use \texttt{NGSolve} \cite{ngsolve} and \texttt{NGSTrefftz} \cite{ngstrefftz}.
Replication data are available in \cite{stocker_2026_20490547}.

\subsection{Prescribed Oseen--Kovasznay flow}\label{sec:kovasznay_oseen}

As a first test we consider the Kovasznay flow, but now interpreted as an
\emph{exact solution of the stationary Oseen problem with prescribed convection
field}.
More precisely, we set
\begin{eqs}
    u = \begin{pmatrix}
    1-\exp(\kappa x)\cos(2\pi y)\\[0.2em]
    \kappa/(2\pi)\exp(\kappa x)\sin(2\pi y)
    \end{pmatrix},
    \qquad
    p = -\frac12\exp(2\kappa x)+\bar p,
\end{eqs}
with
\[
\kappa = \frac{1}{2\nu}-\sqrt{\frac{1}{4\nu^2}+4\pi^2},
\]
where $\bar p$ is chosen such that $\int_\Omega p = 0$.
We then prescribe the convecting field as
\[
w=u,
\]
so that $(u,p)$ solves the Oseen problem with right-hand side $f=0$.
As in the Navier--Stokes experiment, we impose inhomogeneous Dirichlet boundary
conditions matching the exact velocity.
The penalty parameter is chosen as $\lambda=50 k^2$.

We choose the computational domain
\[
\Omega = [-0.5,1.5]\times [0,2],
\]
set $\nu=1.0$, and compare the Trefftz-DG method with $\IT_w$ trial space
against the standard DG method with $\IP^k$ elements for $k=2,3,4,5$.

\begin{figure}[ht!]
\begin{center}
\resizebox{.8\linewidth}{!}{
\begin{tikzpicture}
\begin{groupplot}[%
  group style={
    group name={my plots},
    group size=2 by 1,
    horizontal sep=2cm,
  },
  legend style={
    legend columns=8,
    at={(0.5,-0.2)},
    draw=none
  },
  ymajorgrids=true,
  grid style=dashed,
  cycle list name=paulcolors4,
]
\nextgroupplot[
    title={velocity},
    ymode=log,
    xmode=log,
    x dir=reverse,
    ylabel={$L^2(\Omega)$-error},
    xlabel={$h$}
]
\foreach \p in {2,3,4,5}{
    \addplot+[discard if not={trefftz}{1},discard if not={p}{\p},discard if not={nu}{1.0}]
    table [x=h, y=ul2error, col sep=comma] {oseen_kovasznay.csv};
}
\foreach \p in {2,3,4,5}{
    \addplot+[discard if not={trefftz}{0},discard if not={p}{\p},discard if not={nu}{1.0}, dashed]
    table [x=h, y=ul2error, col sep=comma] {oseen_kovasznay.csv};
}
\addplot[black,domain=0.03:0.5] {exp(-3*ln(1/x)+3.7)};
\addplot[black,domain=0.03:0.5] {exp(-4*ln(1/x)+2)};
\addplot[black,domain=0.03:0.5] {exp(-5*ln(1/x)+1.3)};
\addplot[black,domain=0.03:0.5] {exp(-6*ln(1/x)+0.5)};

\nextgroupplot[
    title={pressure},
    ymode=log,
    xmode=log,
    x dir=reverse,
    ylabel={$L^2(\Omega)$-error},
    xlabel={$h$}
]
\foreach \p in {2,3,4,5}{
    \addplot+[discard if not={trefftz}{1},discard if not={p}{\p},discard if not={nu}{1.0}]
    table [x=h, y=pl2error, col sep=comma] {oseen_kovasznay.csv};
}
\addlegendimage{solid}
\addlegendimage{dashed}
\foreach \p in {2,3,4,5}{
    \addplot+[discard if not={trefftz}{0},discard if not={p}{\p},discard if not={nu}{1.0}, dashed]
    table [x=h, y=pl2error, col sep=comma] {oseen_kovasznay.csv};
}
\addplot[black,dashed,domain=0.03:0.5] {exp(-2*ln(1/x)+6)};
\addplot[black,dashed,domain=0.03:0.5] {exp(-3*ln(1/x)+5.5)};
\addplot[black,dashed,domain=0.03:0.5] {exp(-4*ln(1/x)+4.7)};
\addplot[black,dashed,domain=0.03:0.5] {exp(-5*ln(1/x)+3.7)};
\legend{$k=2$,$k=3$,$k=4$,$k=5$,$\mathcal O(h^{k+1})$,$\mathcal O(h^k)$}
\end{groupplot}
\end{tikzpicture}}
\end{center}
\vspace{-1.5em}
\caption{
Convergence of the velocity and pressure in the $L^2$-norm for the prescribed
Oseen--Kovasznay problem.
The Trefftz-DG results are shown in solid lines, while the standard DG results
are shown in dashed lines.
The expected convergence rates are indicated by the black reference lines.
}
\label{fig:oseen_kovasznay}
\end{figure}

The results in \cref{fig:oseen_kovasznay} show that both discretizations recover
the expected asymptotic convergence rates,
namely $\mathcal O(h^{k+1})$ for the velocity and $\mathcal O(h^{k})$ for the
pressure in the $L^2$-norm.
Compared at the same polynomial degree, the standard DG method yields slightly
smaller error constants, whereas the Trefftz-DG method exhibits the same
asymptotic slopes.

\begin{figure}[ht!]
\begin{center}
\resizebox{.8\linewidth}{!}{
\begin{tikzpicture}
\begin{groupplot}[%
  group style={
    group name={my plots},
    group size=2 by 1,
    horizontal sep=2cm,
  },
  legend style={
    legend columns=8,
    at={(0.3,-0.2)},
    draw=none
  },
  ymajorgrids=true,
  grid style=dashed,
  cycle list name=paulcolors4,
]
\nextgroupplot[
    title={velocity},
    ymode=log,
    xmode=log,
    ylabel={$L^2(\Omega)$-error},
    xlabel={DoFs}
]
\foreach \p in {2,3,4,5}{
    \addplot+[discard if not={trefftz}{1},discard if not={p}{\p},discard if not={nu}{1.0}]
    table [x=ndof, y=ul2error, col sep=comma] {oseen_kovasznay.csv};
}
\foreach \p in {2,3,4,5}{
    \addplot+[discard if not={trefftz}{0},discard if not={p}{\p},discard if not={nu}{1.0}, dashed]
    table [x=ndof, y=ul2error, col sep=comma] {oseen_kovasznay.csv};
}

\nextgroupplot[
    title={pressure},
    ymode=log,
    xmode=log,
    ylabel={$L^2(\Omega)$-error},
    xlabel={DoFs}
]
\foreach \p in {2,3,4,5}{
    \addplot+[discard if not={trefftz}{1},discard if not={p}{\p},discard if not={nu}{1.0}]
    table [x=ndof, y=pl2error, col sep=comma] {oseen_kovasznay.csv};
}
\foreach \p in {2,3,4,5}{
    \addplot+[discard if not={trefftz}{0},discard if not={p}{\p},discard if not={nu}{1.0}, dashed]
    table [x=ndof, y=pl2error, col sep=comma] {oseen_kovasznay.csv};
}
\legend{$k=2$,$k=3$,$k=4$,$k=5$}
\end{groupplot}
\end{tikzpicture}}
\end{center}
\vspace{-1.5em}
\caption{
 As in \cref{fig:oseen_kovasznay}, but with the error plotted against the number of degrees of freedom (DoFs) instead of the mesh size $h$.
}
\label{fig:oseen_kovasznay_ndof}
\end{figure}

In \cref{fig:oseen_kovasznay_ndof}, we plot the same errors against the number of degrees of freedom (DoFs) instead of the mesh size $h$.
This comparison shows that the Trefftz-DG method achieves a given error level with significantly fewer DoFs than the standard DG method, especially for higher polynomial degrees.

\begin{table}[ht!]\centering
{\setlength{\tabcolsep}{4pt}\begin{filecontents*}{oseen_kovasznay.csv}
hnr,nu,h,p,alpha,trefftz,ul2error,ul2eoc,pl2error,pl2eoc,totaltime,ndof
0,1.0,0.5,2,50,1,8.066459522850618,0.0,142.95834175464924,0.0,0.028371334075927734,340
1,1.0,0.25,2,50,1,1.287920197186935,2.6468923910797684,82.85856003800403,0.7868721513539959,0.03882431983947754,1360
2,1.0,0.125,2,50,1,0.15361001762913862,3.0676989940211916,18.64357296169906,2.151972375704612,0.11148667335510254,5440
3,1.0,0.0625,2,50,1,0.018382037534571804,3.0629037096534266,5.065172309866961,1.8799951194947282,0.4293692111968994,21760
4,1.0,0.03125,2,50,1,0.0022072545881353595,3.0579717422757593,1.2797044218359366,1.984800724289261,1.5407609939575195,87040
0,1.0,0.5,3,50,1,1.6106074796734828,0.0,53.49602814194146,0.0,0.020848751068115234,476
1,1.0,0.25,3,50,1,0.10369355966772806,3.9572067411127847,10.288683626977257,2.378373371181597,0.050473690032958984,1904
2,1.0,0.125,3,50,1,0.008670035401634073,3.5801445976632293,1.3163633865732085,2.9664287001684024,0.1820850372314453,7616
3,1.0,0.0625,3,50,1,0.0005271477968622295,4.039758471008933,0.16486782804731798,2.9971759979572004,0.7207260131835938,30464
4,1.0,0.03125,3,50,1,3.110856842774188e-05,4.082823597444089,0.02047755656604218,3.0091944171476754,2.744845390319824,121856
0,1.0,0.5,4,50,1,0.37505056839714696,0.0,26.385347917014386,0.0,0.025974512100219727,612
1,1.0,0.25,4,50,1,0.019207618995565556,4.28733453194073,2.1184350718766956,3.6386661898323385,0.0785369873046875,2448
2,1.0,0.125,4,50,1,0.0007068803986285416,4.764068743902727,0.18783425913406807,3.4954667864915745,0.28577613830566406,9792
3,1.0,0.0625,4,50,1,2.2192313725984993e-05,4.993334144408259,0.012595223766110412,3.898511562214222,1.0000121593475342,39168
4,1.0,0.03125,4,50,1,6.666959211594734e-07,5.0568873768889935,0.0007921371133189608,3.9909827707176304,5.217082977294922,156672
0,1.0,0.5,5,50,1,0.12847911472254858,0.0,7.90049859813494,0.0,0.06591510772705078,748
1,1.0,0.25,5,50,1,0.0032606520259732485,5.300229560868744,0.46124128593397057,4.0983501439974725,0.14700078964233398,2992
2,1.0,0.125,5,50,1,5.460690542231766e-05,5.899933274395472,0.016628444516854704,4.793796529826394,0.45278072357177734,11968
3,1.0,0.0625,5,50,1,7.759515187932383e-07,6.136973075164504,0.0005159844607929708,5.010181791341769,1.729691743850708,47872
4,1.0,0.03125,5,50,1,1.1569455742304899e-08,6.067573613119284,1.5285658942851564e-05,5.077076967411148,7.841163873672485,191488
0,1.0,0.5,2,50,0,6.777091227325906,0.0,254.6714055811564,0.0,0.020650386810302734,510
1,1.0,0.25,2,50,0,1.16620120614397,2.5388494719103103,82.7307248218512,1.6221418563114454,0.040288448333740234,2040
2,1.0,0.125,2,50,0,0.14581811320347354,2.9995748756866885,15.495538563872103,2.4165703237811176,0.21088910102844238,8160
3,1.0,0.0625,2,50,0,0.017768492965506796,3.036776709675543,3.2891690554313278,2.2360578319213187,0.6182131767272949,32640
4,1.0,0.03125,2,50,0,0.002160009613159518,3.04021168614029,0.7189464146362614,2.1937670100246978,2.6523051261901855,130560
0,1.0,0.5,3,50,0,1.4463872795136459,0.0,113.59807416072799,0.0,0.030941247940063477,884
1,1.0,0.25,3,50,0,0.09958206282897482,3.860424183735571,8.730896604424196,3.701664750123682,0.10805821418762207,3536
2,1.0,0.125,3,50,0,0.00817800939492443,3.6060642755955827,1.2527072944813196,2.8010804602561117,0.42621850967407227,14144
3,1.0,0.0625,3,50,0,0.0005056260570916279,4.0156070013964325,0.14731632436495215,3.088060144532237,1.626354455947876,56576
4,1.0,0.03125,3,50,0,3.052124291255803e-05,4.0501851945673994,0.017101147774531266,3.1067522440835535,7.951674699783325,226304
0,1.0,0.5,4,50,0,0.2119090293149604,0.0,18.121382200813205,0.0,0.05588722229003906,1360
1,1.0,0.25,4,50,0,0.012499049766978567,4.083554737264901,1.6361115550660774,3.4693499764537505,0.2023169994354248,5440
2,1.0,0.125,4,50,0,0.00040411818449676717,4.950897337449925,0.09152289462163857,4.15999462736709,0.80391526222229,21760
3,1.0,0.0625,4,50,0,1.2960768985497972e-05,4.962554047786173,0.005556245808509298,4.041950350657367,3.5459141731262207,87040
4,1.0,0.03125,4,50,0,4.0227565844638356e-07,5.009823064548923,0.00033231486272113325,4.063487701754344,15.878551483154297,348160
0,1.0,0.5,5,50,0,0.037076343098810395,0.0,3.269377154146953,0.0,0.09347820281982422,1938
1,1.0,0.25,5,50,0,0.0010989032474715927,5.076362680134611,0.1406862105853405,4.538462981184488,0.41804933547973633,7752
2,1.0,0.125,5,50,0,1.805729145176416e-05,5.927339051679525,0.005336332582432746,4.720488534875192,1.8477001190185547,31008
3,1.0,0.0625,5,50,0,2.9622697763710016e-07,5.929734664152753,0.00017807236469476753,4.905313039063005,8.184980154037476,124032
4,1.0,0.03125,5,50,0,4.635225767546885e-09,5.9979196140345765,5.586621240164257e-06,4.994343817414067,34.05643701553345,496128
\end{filecontents*}
\pgfplotstabletypeset[
col sep=comma,
fixed,
fixed zerofill=true,
every head row/.style={before row=\toprule,after row=\midrule},
every last row/.style={after row=\bottomrule},
create on use/nhnr/.style={create col/set list={1.0,$2^{-1}$,$2^{-2}$,$2^{-3}$,$2^{-4}$}},
columns={nhnr,totaltime,totaltime,totaltime,totaltime,totaltime,totaltime},
display columns/0/.style={ column name={Meshsize}, string type, column type=l, discard if not={trefftz}{1}, discard if not={p}{2}, discard if not={nu}{1.0} },
display columns/1/.style={ column name={$\IT^3$}, discard if not={trefftz}{1}, discard if not={p}{3}, discard if not={nu}{1.0}, column type/.add={r@{\hspace{10pt}}}{} },
display columns/2/.style={ column name={$\IP^3$}, discard if not={trefftz}{0}, discard if not={p}{3}, discard if not={nu}{1.0}, column type/.add={r@{\hspace{10pt}}}{} },
display columns/3/.style={ column name={$\IT^4$}, discard if not={trefftz}{1}, discard if not={p}{4}, discard if not={nu}{1.0}, column type/.add={r@{\hspace{10pt}}}{} },
display columns/4/.style={ column name={$\IP^4$}, discard if not={trefftz}{0}, discard if not={p}{4}, discard if not={nu}{1.0}, column type/.add={r@{\hspace{10pt}}}{} },
display columns/5/.style={ column name={$\IT^5$}, discard if not={trefftz}{1}, discard if not={p}{5}, discard if not={nu}{1.0}, column type/.add={r@{\hspace{10pt}}}{} },
display columns/6/.style={ column name={$\IP^5$}, discard if not={trefftz}{0}, discard if not={p}{5}, discard if not={nu}{1.0}, column type=r },
]{oseen_kovasznay.csv}}
\caption{Total timings in seconds for the prescribed Oseen--Kovasznay problem.}
\label{tab:oseen_kovasznay}
\end{table}

The timing results in \cref{tab:oseen_kovasznay} show a clear advantage of the Trefftz-DG discretization.
While the standard DG method is slightly more accurate at fixed polynomial degree, the Trefftz-DG method is consistently faster, and this advantage becomes more pronounced on finer meshes and for higher orders.
Thus, for this smooth Oseen test, the Trefftz reduction pays off primarily in terms of computational cost rather than improved error constants.

\subsection{Sensitivity with respect to viscosity and penalty parameter}\label{sec:oseen_reynolds}

We next consider the same Kovasznay Oseen problem as in \cref{sec:kovasznay_oseen}, this time we vary both the viscosity $\nu$ and the penalty parameter $\lambda$.
More precisely, we fix the polynomial degree to $k=4$, choose
$ \nu = 0.25^i,\ i=1,\dots,7,$
for $\lambda\in\{32, \,256\}$,
and consider the three mesh sizes $ h\in\{0.25,\,0.125,\,0.0625\}. $

We compare the two discretizations directly through their $L^2$ velocity errors.
In \cref{fig:oseen_errors_viscosity_penalty} we show the errors as functions of the viscosity $\nu$ for the two representative penalty parameters $\lambda=32$ and $\lambda=256$.
Solid lines correspond to the Trefftz-DG method, whereas dashed lines show the standard DG method.

\begin{figure}[ht!]
\centering
\resizebox{\textwidth}{!}{%
\begin{tikzpicture}
\begin{groupplot}[%
    group style={
        group size=3 by 1,
        horizontal sep=1.4em,
        yticklabels at=edge left,
        x descriptions at=edge bottom,
        y descriptions at=edge left,
    },
    xlabel={Viscosity $\nu$},
    ylabel={Velocity error},
    xmode=log,
    ymode=log,
    legend style={
        at={(1.5,-0.3)},
        anchor=south,
        legend columns=4,
        draw=none,
    },
    cycle list name=paulcolors2,
    ymin=1e-12,ymax=1e4
]

\nextgroupplot[title={$h=0.25$}]
\addplot+[ discard if not={h}{0.25}, discard if not={alpha}{32}, discard if not={trefftz}{1}] table[ col sep=comma, x=nu, y=ul2error ] {oseen_kovasznay_nuo4.csv};
\addplot+[ discard if not={h}{0.25}, discard if not={alpha}{32}, discard if not={trefftz}{0} ] table[ col sep=comma, x=nu, y=ul2error ] {oseen_kovasznay_nuo4.csv};
\addplot+[ discard if not={h}{0.25}, discard if not={alpha}{256},  discard if not={trefftz}{1}] table[ col sep=comma, x=nu, y=ul2error ] {oseen_kovasznay_nuo4.csv};
\addplot+[ discard if not={h}{0.25}, discard if not={alpha}{256},  discard if not={trefftz}{0}] table[ col sep=comma, x=nu, y=ul2error ] {oseen_kovasznay_nuo4.csv};
\addlegendentry{TDG, $\lambda=16$}
\addlegendentry{DG, $\lambda=16$}
\addlegendentry{TDG, $\lambda=256$}
\addlegendentry{DG, $\lambda=256$}

\nextgroupplot[title={$h=0.125$}]
\addplot+[ discard if not={h}{0.125}, discard if not={alpha}{32}, discard if not={trefftz}{1}] table[ col sep=comma, x=nu, y=ul2error ] {oseen_kovasznay_nuo4.csv};
\addplot+[ discard if not={h}{0.125}, discard if not={alpha}{32}, discard if not={trefftz}{0} ] table[ col sep=comma, x=nu, y=ul2error ] {oseen_kovasznay_nuo4.csv};
\addplot+[ discard if not={h}{0.125}, discard if not={alpha}{256},  discard if not={trefftz}{1}] table[ col sep=comma, x=nu, y=ul2error ] {oseen_kovasznay_nuo4.csv};
\addplot+[ discard if not={h}{0.125}, discard if not={alpha}{256},  discard if not={trefftz}{0}] table[ col sep=comma, x=nu, y=ul2error ] {oseen_kovasznay_nuo4.csv};

\nextgroupplot[title={$h=0.0625$}]
\addplot+[ discard if not={h}{0.0625}, discard if not={alpha}{32}, discard if not={trefftz}{1}] table[ col sep=comma, x=nu, y=ul2error ] {oseen_kovasznay_nuo4.csv};
\addplot+[ discard if not={h}{0.0625}, discard if not={alpha}{32}, discard if not={trefftz}{0} ] table[ col sep=comma, x=nu, y=ul2error ] {oseen_kovasznay_nuo4.csv};
\addplot+[ discard if not={h}{0.0625}, discard if not={alpha}{256},  discard if not={trefftz}{1}] table[ col sep=comma, x=nu, y=ul2error ] {oseen_kovasznay_nuo4.csv};
\addplot+[ discard if not={h}{0.0625}, discard if not={alpha}{256},  discard if not={trefftz}{0}] table[ col sep=comma, x=nu, y=ul2error ] {oseen_kovasznay_nuo4.csv};

\end{groupplot}
\end{tikzpicture}}
\caption{
Velocity errors of the Trefftz-DG and standard DG discretizations as functions of the viscosity~$\nu$ for two representative penalty parameters and three mesh sizes~$h$.
 Solid lines show Trefftz-DG errors, whereas dashed lines show standard DG errors.
}
\label{fig:oseen_errors_viscosity_penalty}
\end{figure}

For moderate viscosities, both discretizations show comparable qualitative behaviour, although the standard DG method gives smaller errors at fixed polynomial degree, as already observed in \cref{fig:oseen_kovasznay}.
For smaller viscosities the Trefftz-DG errors deteriorate earlier than the corresponding standard DG errors.
This loss of accuracy is most pronounced for the smaller penalty parameter, while increasing the penalty from $\lambda=32$ to $\lambda=256$ delays the onset of the deterioration.
Mesh refinement has a similar stabilizing effect, indicating that the loss of accuracy is tied to the resolution and conditioning of the local embedded Trefftz construction in the convection-dominated regime.

Overall, the convection-dominated regime is clearly the most problematic for our approach, and improvements may be possible both at the level of the method itself and in the robustness of its implementation, for instance through more stable scaling strategies for the local Trefftz spaces.
A thorough treatment of Trefftz-DG methods for singularly perturbed problems is beyond the scope of this work and remains an interesting direction for future research.

\section*{Acknowledgements}
\sloppy{
This research was funded in part by the Austrian Science Fund (FWF) \href{https://doi.org/10.55776/ESP4389824}{10.55776/ESP4389824}.
For open access purposes, the author has applied a CC BY public copyright license to any author-accepted manuscript version arising from this submission.}

\appendix

\section{Auxiliary results}\label{sec:appendix}
\subsection{Discrete inequalities}\label{sec:appendix:ineq}
We collect the standard discrete inequalities used throughout the analysis.
All constants below are independent of the mesh size; their precise dependence is indicated in each case.

\smallskip\noindent
\emph{Discrete Sobolev--Poincar\'e.}
There exists $C_{\Omega}>0$, depending only on $\Omega$ and $k$, such that
\begin{equation}\label{sp}
    \|v_h\|_{\Th} \le C_{\Omega} \|v_h\|_{1,h} \quad \forall v_h\in\IP^k.
\end{equation}

\smallskip\noindent
\emph{Inverse inequality for the Laplacian.}
There exists $C_\Delta>0$, depending only on shape-regularity and $k$, such that
\begin{equation}\label{la}
    \|h_T \Delta v_h\|_{T} \le C_\Delta \|\nabla v_h\|_{T} \quad \forall v_h\in\IP^k.
\end{equation}

\smallskip\noindent
\emph{Local $L^2$--$L^4$ embedding.}
There exists $C_{4}>0$, depending only on shape-regularity and $k$, such that
\begin{equation}\label{l4}
    \|v_h\|_{L^4(T)} \le C_{4} h_T^{-\frac d4} \|v_h\|_{T} \quad \forall v_h\in\IP^k.
\end{equation}
We will often use the dual version of this estimate: for any $v\in L^{\frac43}(T)$,
\begin{equation}\label{l4dual}
   \|\Pi^{k}_T v \|_T= \|v\|_{(\IP^k(T))'} \le C_{4} h_T^{-\frac d4} \|v\|_{L^{\frac43}(T)},
\end{equation}
where the dual norm is defined with respect to the standard $L^2$-scalar product.
Indeed, by the definition of the dual norm and \eqref{l4},
\[
\|v\|_{(\IP^k(T))'}
= \sup_{r\in \IP^k(T)\setminus\{0\}} \frac{|(v,r)_T|}{\|r\|_T}
\le C_{4} h_T^{-\frac d4}
\sup_{r\in \IP^k(T)\setminus\{0\}} \frac{|(v,r)_T|}{\|r\|_{L^4(T)}}
\le C_{4} h_T^{-\frac d4} \|v\|_{L^{\frac43}(T)}.
\]

\smallskip\noindent
\emph{Discrete Sobolev embedding.}
By \cite[Theorem 5.3]{DiPietroErn}, for $q\leq 6$ there exists $C_{S,q}>0$, depending only on shape-regularity, $\Omega$, $k$ and $q$, such that
\begin{equation}\label{se}
    \|v_h\|_{L^{q}(\Omega)} \le C_{S,q} \|v_h\|_{1,h}, \quad \forall v_h\in\IP^k.
\end{equation}

\smallskip\noindent
\emph{Polynomial $L^4$ trace on facets.}
There exists $C_{tr,4}>0$, depending only on shape-regularity and $k$, such that for every facet $F\subset\partial T$ and every
$z_h\in[\IP^k(T)]^d$,
\begin{equation}\label{eq:L4_trace_poly}
\|z_h\|_{L^4(F)} \le C_{tr,4}\,h_T^{-\frac14}\,\|z_h\|_{L^4(T)}.
\end{equation}

\smallskip\noindent
\emph{Broken $W^{1,4}$ trace on facets.}
There exists $C_{W,4}>0$, depending only on shape-regularity, such that for every facet
$F\subset\partial T$ and every $w\in [W^{1,4}(T)]^d$,
\begin{equation}\label{eq:w14_trace}
\|w\|_{L^4(F)}
\le
C_{W,4}\Bigl(
h_T^{-\frac14}\|w\|_{L^4(T)}
+
h_T^{\frac34}\|\nabla w\|_{L^4(T)}
\Bigr).
\end{equation}

\section{Possible choices for \texorpdfstring{$\blfconv{w}$}{the convection term}}\label{sec:choices_ch}

The two conservative variants are only needed later in the polynomial case, so we keep their discussion in that setting.
We record three standard DG realizations of the convective term.
All of them fit into \cref{ass:ch} (with possibly $\blfconv{w}(u_h,u_h)=0$ or, after adding upwinding, $\blfconv{w}(u_h,u_h)\ge 0$), and they differ mainly in whether they are locally conservative and in how the kinetic-energy balance is enforced.

\subsection{Temam-type (energy-stable) form}
Following \cite{DE10,DiPietroErn}, a discrete counterpart of Temam's modification is obtained by adding a divergence correction
\begin{align}\label{eq:ch_temam}
\blfconv{w}(u,v)
&:= (w\cdot\nabla u,  v)_\Omega
- \big(\avg{w}\cdot n_F \jmp{u}, \avg{v}\big)_\Fhi \nonumber\\
&\quad + \frac12\big((\nabla\cdot w)u,v\big)_\Omega
-\frac12\big(\jmp{w}\cdot n_F, \avg{u\cdot v}\big)_\Fh .
\end{align}
This choice satisfies the exact cancellation $\blfconv{v}(v,v)=0$, but it is not locally conservative since it contains a source term proportional to $\nabla\cdot w$.

\begin{lemma}\label{lem:ch_temam}
    The Temam-type trilinear form \eqref{eq:ch_temam} satisfies \cref{ass:ch}.
\end{lemma}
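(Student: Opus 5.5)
We verify the three parts of \cref{ass:ch} in turn: nonnegativity \eqref{eq:ch_nonneg}, continuity \eqref{eq:ch_cont}, and the mixed $L^4$ estimate \eqref{eq:ch_mixed_L4}. Throughout, $\jmp{w}\cdot n_F$ is read as the normal jump $\jmp{w\cdot n}$, which is the quantity controlled in $|w|_{1,4,h}$ and $|w|_{h,d}$.

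\emph{Nonnegativity.} We show that \eqref{eq:ch_temam} is skew-symmetric in $(u,v)$ for broken $H^1$ arguments, so that $\blfconv{w}(u_h,u_h)=0$ for all $w\in\IW$.
\begin{itemize}
\item Integrating $(w\cdot\nabla u,v)_T$ by parts elementwise gives $-(u,w\cdot\nabla v)_T-((\nabla\cdot w)u,v)_T+(w\cdot n\,u,v)_{\partial T}$.
\item Summing over $T\in\Th$, we rewrite the boundary sum with the identity $\jmp{ab}=\avg{a}\jmp{b}+\jmp{a}\avg{b}$, applied once with $a=w\cdot n$ and once to the product $u\cdot v$.
\item Setting $v=u$ and symmetrizing, the volume divergence term cancels against $\tfrac12((\nabla\cdot w)u,u)$. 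The interior-facet term $(\avg{w}\cdot n\,\jmp{u},\avg{u})$ cancels against $\tfrac12(\avg{w}\cdot n,\jmp{u\cdot u})$, and the term $\jmp{w\cdot n}\avg{u\cdot u}$ cancels against the last term of \eqref{eq:ch_temam}.
\end{itemize}
The boundary facets are handled by the convention $\avg{\phi}=\jmp{\phi}=\phi$ on $\Fhb$. This is routine bookkeeping, and yields $\blfconv{w}(u_h,u_h)=0\ge0$.

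\emph{Continuity \eqref{eq:ch_cont}.} We bound the four terms of \eqref{eq:ch_temam} separately by Hölder's inequality with exponents $(4,2,4)$.
\begin{itemize}
\item Volume convection term: $|(w\cdot\nabla u,v_h)_T|\le\|w\|_{L^4(T)}\|\nabla u\|_T\|v_h\|_{L^4(T)}$. The global $L^4$ norm of $v_h$ is controlled by \eqref{se}.
\item Divergence term: $\|h\nabla w\|_{L^4}\|h^{-1}u\|\,\|v_h\|_{L^4}$, or alternatively $\|\nabla w\|_{L^4}\|u\|_{L^4}\|v_h\|$.
\item Interior-facet term: apply \eqref{eq:w14_trace} to $\avg{w}$. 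The jump $\jmp{u}$ is weighted by $h^{-1/2}$, and $\avg{v_h}$ is handled by \eqref{eq:L4_trace_poly} followed by \eqref{l4}.
\item Jump term $\jmp{w\cdot n}\avg{u\cdot v}$: pair $h^{1/4}\jmp{w\cdot n}$ in $L^4(F)$ with $h^{-1/4}$ traces of $u$ and $v_h$, both taken in $L^{8/3}$ or $L^4$ on facets.
\end{itemize}
Powers of $h$ must combine so that only $|w|_{1,4,h}\|u\|_{1,h,\ast}\|v_h\|_{1,h}$ remains. The $L^4$-type norms of the non-polynomial argument $u$ are where $\|\cdot\|_{1,h,\ast}$ enters; for this we need a broken Sobolev embedding $\|u\|_{L^4}\lesssim\|u\|_{1,h}$ for $u\in H^2(\Th)$, which follows from \cite{DiPietroErn}.

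\emph{Mixed estimate \eqref{eq:ch_mixed_L4}.} This is the main obstacle, because every factor must be measured elementwise with the local weights of $|w|_{h,d}$ instead of global embeddings. For $z\in\IX_{\rm ps}$ we apply \eqref{l4} to $v_h$ locally, $\|v_h\|_{L^4(T)}\le C_4h_T^{-d/4}\|v_h\|_T$, which is the estimate that produces the weight $h^{1-d/4}$ in $|w|_{h,d}$. The terms are then handled as follows.
\begin{itemize}
\item Volume convection term: $(w\cdot\nabla z,v_h)_T\le\|w\|_{L^4}\|\nabla z\|\,\|v_h\|_{L^4}$, and we still need $\|v_h\|_T\lesssim h_T\|\nabla v_h\|_T$. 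A Poincaré bound is not available for $v_h$, since only $z$ lies in $\IX_{\rm ps}$.
\item Remedy: transfer derivatives by the skew form obtained in the nonnegativity step, $\blfconv{w}(z,v_h)=-\blfconv{w}(v_h,z)$ up to boundary terms. In this form the $L^2$ norm falls on $z$, where $\|z\|_T\le Ch_T\|\nabla z\|_T$ holds, while $\nabla v_h$ is measured in $L^2$.
\item Divergence term: bounded by $\|h^{2-d/4}\nabla w\|_{L^4}\,h^{-1}\|z\|\,h^{-1+d/4}\|v_h\|_{L^4}$. The factor $h^{-1}\|z\|$ is controlled by Poincaré and the factor $h^{-1+d/4}\|v_h\|_{L^4}$ by \eqref{l4}.
\item Facet terms: use \eqref{eq:w14_trace} and \eqref{eq:L4_trace_poly}, with trace inequalities for $z$ combined with its Poincaré property.
\end{itemize}
Summing elementwise with maxima over $T$ and Cauchy--Schwarz gives $C_{t,\ast}\nu|w|_{h,d}\|z\|_{1,h,\ast}\|v_h\|_{1,h}$. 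If a term resists the local scaling, we would first try to subtract $\Pi^0v_h$ elementwise and exploit the structure of the form on the mean, which is the step we expect to need the most care.
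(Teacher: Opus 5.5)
Your route is the paper's. You use exact skew-symmetry of \eqref{eq:ch_temam} to write $\blfconv{w}(z,v_h)=-\blfconv{w}(v_h,z)$, so that the volume term and the $\avg{w}$-facet term carry $\nabla v_h$ and $\jmp{v_h}$, respectively. These two terms then close with \eqref{l4} applied to $\nabla v_h$, the trace bounds, and the Poincar\'e estimate for $z$. Two small corrections to the nonnegativity step: the skew-symmetry holds exactly, not ``up to boundary terms''. On $\Fhb$ the cancellation must be checked directly, since $\jmp{ab}=\avg{a}\jmp{b}+\jmp{a}\avg{b}$ fails there under the convention $\avg{\phi}=\jmp{\phi}=\phi$.

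\textbf{The gap.} It is the divergence term $\tfrac12((\nabla\cdot w)z,v_h)_\Th$, and likewise $\tfrac12(\jmp{w\cdot n},\avg{z\cdot v_h})_\Fh$. Both are symmetric in $(z,v_h)$, so the swap changes nothing and $v_h$ stays undifferentiated. Your claim that $h^{-1+d/4}\|v_h\|_{L^4(T)}$ is controlled by \eqref{l4} is false. \eqref{l4} only turns it into $C_4h_T^{-1}\|v_h\|_T$, which requires exactly the local Poincar\'e bound for $v_h$ that you yourself said is unavailable. For example, if $v_h$ interpolates a fixed smooth compactly supported field, then $h^{-1}\|v_h\|_\Omega\simeq h^{-1}$ while $\|v_h\|_{1,h}\simeq1$. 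Subtracting $\Pi^0 v_h$ does not help, because the elementwise mean is precisely the part that is not locally controlled.

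The same problem appears in your continuity sketch. Both options you list for the divergence term need either $h^{-1}\|u\|$ or an unweighted $\|\nabla w\|_{L^4}$, and neither is available from $\|u\|_{1,h,\ast}$ and $|w|_{1,4,h}$. For \eqref{eq:ch_cont} there is a clean way out: write $\blfconv{w}(u,v_h)=\tfrac12\bigl(\blfconv{w}(u,v_h)-\blfconv{w}(v_h,u)\bigr)$. In this difference the divergence and normal-jump terms cancel identically. What remains involves only $w$ and $\avg{w}$, which you can handle with \eqref{se}, a broken Sobolev embedding for $u$, $L^4$ facet traces, and \eqref{eq:w14_trace}.

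\textbf{The mixed estimate itself.} For \eqref{eq:ch_mixed_L4} no such repair exists with $|w|_{h,d}$ as defined, and the paper's proof does not supply the missing step either. It bounds this term by $\|\Div w\|_\Th\|v_h\|_{L^4(\Th)}\|z\|_{L^4(\Th)}$ and invokes $\|h_T^{1-\frac d4}\Div w\|_{T}\lesssim\|h_T^{2-\frac d4}\nabla w\|_{L^4(T)}$. H\"older only gives $\|\Div w\|_T\lesssim h_T^{d/4}\|\nabla w\|_{L^4(T)}$, which is a factor $h_T^{1-d/4}$ short (take $\Div w\equiv1$).

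In fact the estimate fails. Take $d=2$, $\Omega=(0,1)^2$, and:
\begin{itemize}
\item $w=\epsilon M\sin(x_1/\epsilon)e_1$ with $\epsilon\simeq 10h$ and $1/\epsilon\in\pi\mathbb{N}$, so that $w\cdot n=0$ on $\partial\Omega$;
\item $z=\sum_T\cos(x_{T,1}/\epsilon)\,b_Te_1$ over the elements of a central square, with $b_T$ the cubic bubble; $z$ vanishes on every facet and satisfies the elementwise Poincar\'e estimate, so it lies in an admissible $\IX_{\rm ps}$;
\item $v_h$ the interpolant of $\psi e_1$, with $\psi\in C_c^\infty(\Omega)$ and $\psi\equiv1$ near that square.
\end{itemize}
All facet terms vanish. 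Integrating by parts on each $T$ gives $\blfconv{w}(z,v_h)=-\tfrac M2\sum_T\cos(x_{T,1}/\epsilon)(\cos(x_1/\epsilon),b_T)_T\simeq -M$. On the other hand, $\nu|w|_{h,d}\lesssim h^2M$, $\|z\|_{1,h,\ast}\simeq h^{-1}$ and $\|v_h\|_{1,h}\simeq1$. So the right-hand side of \eqref{eq:ch_mixed_L4} is $O(C_{t,\ast}hM)$, which forces $C_{t,\ast}\gtrsim h^{-1}$.

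What your argument honestly gives for this term (\eqref{l4} on $v_h$, Poincar\'e on $z$, then \eqref{sp}) is the weight $C_\Omega\max_T\|h^{1-\frac d4}\nabla w\|_{L^4(T)}$ in place of $\max_T\|h^{2-\frac d4}\nabla w\|_{L^4(T)}$. This is still $O(h)$ for fixed smooth $w$, so a resolution condition survives. However, $|w|_{h,d}$, and the treatment of the $\jmp{w\cdot n}$ term, would have to be changed accordingly.
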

\begin{proof}
We first prove \eqref{eq:ch_mixed_L4}.
No Trefftz property of the second argument is used below.
We use the skew-symmetry in the last two arguments and therefore bound
\[
\blfconv{w}(v_h,z)=I+II+III+IV,
\]
with the four terms corresponding to the four terms in \eqref{eq:ch_temam}.

For the volume term, by H\"older's inequality, \eqref{l4}, and the elementwise
Poincar\'e inequality for $z\in\IX_{\rm ps}(\Th)$,
\[
|I|
\le
\|w\|_{L^4(\Th)}\|\nabla v_h\|_{\Th}\|z\|_{L^4(\Th)}
\lesssim
\nu |w|_{h,d}\,\|v_h\|_{1,h}\,\|z\|_{1,h}.
\]

From the standard trace theorem from $H^1(\widehat T)\to H^{1/2}(\widehat F)$ on the reference element and scaling we have
\[
\|z\|_{L^2(F)}
+
h_T^{1/2}|z|_{H^{1/2}(F)}
\lesssim
h_T^{-1/2}\|z\|_{L^2(T)}
+
h_T^{1/2}\|\nabla z\|_{L^2(T)}\quad
\forall\,z\in H^1(T). 
\]
Using the Sobolev embedding $H^{1/2}(F)\hookrightarrow L^4(F)$, valid for $d=2,3$, with the corresponding scaling, we obtain
\[
\|h_T^{1/4}z\|_{L^4(F)}
\lesssim
h_T^{-d/4}\|z\|_{L^2(T)}
+
h_T^{1-d/4}\|\nabla z\|_{L^2(T)} .
\]
Using the Poincar\'e, as  $z\in\IX_{\rm ps}(\Th)$, we obtain
\begin{equation}\label{eq:IXps_h14_facet}
\|h^{\frac14} z\|_{L^4(F)}
\le C h_T^{1-\frac d4}\|\nabla z\|_{T}.
\end{equation}
Moreover, by \eqref{eq:w14_trace} and finite overlap of elements
\[
\|h^{\frac14}\avg{w}\|_{L^4(\Fhi)}
\lesssim
\|w\|_{L^4(\Th)}+\|h\nabla w\|_{L^4(\Th)}
\lesssim
\nu |w|_{h,d}.
\]
Hence, for the facet transport term,
\[
|II|
\le
\|h^{\frac14}\avg{w}\|_{L^4(\Fhi)}
\|h^{-\frac12}\jmp{v_h}\|_{L^2(\Fhi)}
\|h^{\frac14}\avg{z}\|_{L^4(\Fhi)}
\lesssim
\nu |w|_{h,d}\,\|v_h\|_{1,h}\,\|z\|_{1,h}.
\]

For the divergence term, using standard embedding 
$
\|h_T^{1-\frac d4}\Div w\|_{L^2(T)}
\lesssim
\|h_T^{2-\frac d4}\nabla w\|_{L^4(T)}
$
together with \eqref{l4} and the Poincar\'e inequality for $z$, we obtain
\[
|III|
\le
\|\Div w\|_{\Th}\|v_h\|_{L^4(\Th)}\|z\|_{L^4(\Th)}
\lesssim
\nu |w|_{h,d}\,\|v_h\|_{1,h}\,\|z\|_{1,h}.
\]

Finally, using
$
\|\avg{z\cdot v_h}\|_{L^{\frac43}(F)}
\lesssim
\|z\|_{L^2(F)}\|v_h\|_{L^4(F)},
$
the trace inequality for $z$, \eqref{eq:L4_trace_poly} for $v_h$, and the jump term in
$|w|_{h,d}$, we get
\begin{align*}
|IV|
&\lesssim
\|\jmp{w}\cdot n\|_{L^4(\Fh)}
\|z\|_{L^2(\Fh)}
\|v_h\|_{L^4(\Fh)}
\lesssim
\bigl(\nu h^{-\frac14}|w|_{h,d}\bigr)
\bigl(h^{\frac12}\|\nabla z\|_{\Th}\bigr)
\bigl(h^{-\frac14}\|v_h\|_{L^4(\Th)}\bigr)
\\
&\lesssim
\nu |w|_{h,d}\,\|z\|_{1,h}\,\|v_h\|_{1,h}.
\end{align*}

Combining the bounds for $I,\dots,IV$ proves \eqref{eq:ch_mixed_L4}.

The proof of \eqref{eq:ch_cont} is the same. 
Decomposing into  the four terms as above, following the same steps, but instead of elementwise Poincar\'e and \eqref{l4} for $z$, use the discrete Sobolev embedding for $z\in\IP^k$, not gaining any powers of $h$.
\end{proof}

\subsection{Conservative form via a modified pressure}
A locally conservative alternative in \cite{DE10} is based on rewriting the momentum equation using a modified pressure so that the convective contribution is in divergence form.
One discrete realization is
\begin{align}\label{eq:ch_cons_pressure}
\blfconv{w_h}(u_h,v_h)
&:= -\big(w_{h} , (u_h\cdot\nabla) v_{h}\big)_\Omega
+ \big((n_F\cdot\avg{u_h}) \avg{w_{h}}, \jmp{v_{h}}\big)_\Fhi \nonumber\\
&\quad + \frac12\big(v_h, \nabla(u_{h}\cdot w_{h})\big)_\Omega
-\frac12\big(n_F\cdot\avg{v_h}, \jmp{u_{h}\cdot w_{h}}\big)_\Fhi .
\end{align}
The salient feature is local conservativity (in the standard DG flux sense), while still permitting an energy estimate through the same abstract assumptions on $\blfconv{w}$ used in \cite{DE10}. 
Using similar arguments as for the Temam-type form, we can show that \eqref{eq:ch_cons_pressure} also satisfies \cref{ass:ch}.

\subsection{Upwind conservative form with exactly solenoidal convecting field}
In the LDG framework of \cite{CKS05}, convection is discretized by an upwind numerical flux.
In the mixed (primal) setting, the convective contribution can be written (for a given convecting field $w$) as
\begin{align} \label{eq:ch_CKS_upwind}
    \blfconv{w}(u_h,v_h) &:= 
            -((\calP_{\Div} w_h\cdot\nabla)u_h,v_h)_\Th
            +\big\langle (\calP_{\Div} w_h\cdot n_F) u_h^w,\jmp{v_h}\big\rangle_{\Fhi}
            +\big\langle (\calP_{\Div} w_h\cdot n)\,u_h^{w}, v_h\big\rangle_{\Fhb}
\end{align}
where $\calP_{\Div}$ is a suitable projection onto the space of $H(\Div)$-conforming element-wise divergence-free vector fields, for example the BDM-interpolation operator. 
And $u_h^{w}$ is the standard upwind trace
\[
    u_h^{w} = \begin{cases}
        u_h^+ & \text{if } \calP_{\Div}w\cdot n_F > 0,\\
        u_h^- & \text{if } \calP_{\Div}w\cdot n_F < 0.
    \end{cases}
\]
A key observation in \cite{CKS05} is that local conservativity follows provided the convecting field is globally divergence-free (in $H(\Div)$),
and they enforce this by taking $w = P u_h$ where $P$ is an element-wise post-processing/projection producing an exactly solenoidal field. 

Following the same approach as in the proof of \cref{lem:ch_temam}, we can show that the upwind form \eqref{eq:ch_CKS_upwind} also satisfies the abstract assumptions in \cref{ass:ch}.
\begin{lemma}\label{lem:th_upwind_mixed_IL}
Let $\blfconv{w}$ be given by \eqref{eq:ch_CKS_upwind}.
Assume $w\in\IW$ and set $\calP_{\Div}w_h\in H(\Div;\Omega)\cap[\IP^k]^d$ with $\Div \tilde w_h|_T=0$ for all $T\in\Th$.
Assume moreover that $\calP_{\Div}$ is $L^4$-stable in the sense that
\begin{equation*}
\|\calP_{\Div} w_h\|_{L^4(T)}\le C_{\Div}\,\|w_h\|_{L^4(T)}\qquad\forall T\in\Th.
\end{equation*}
Then \eqref{eq:ch_mixed_L4} holds.
\end{lemma}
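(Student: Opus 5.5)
We prove \eqref{eq:ch_mixed_L4} for the upwind form by the same template as in the proof of \cref{lem:ch_temam}. Write $\tilde w_h:=\calP_{\Div}w$ and use the orientation in which the upwind convection form is bounded with $z\in\IX_{\rm ps}(\Th)$ placed in the slot that carries a Poincar\'e gain. Split $\blfconv{w}(z,v_h)=I+II+III$ into the volume term, the interior upwind facet term and the boundary facet term. Since $\Div\tilde w_h|_T=0$, no divergence source term appears, so there is no analogue of term $III$ of the Temam proof.

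\emph{Step 1: the volume term.} By H\"older with exponents $(4,2,4)$,
\[
|(\tilde w_h\cdot\nabla z,v_h)_T|\le\|\tilde w_h\|_{L^4(T)}\|\nabla z\|_T\|v_h\|_{L^4(T)}.
\]
Here the gradient sits on $z$, so I would first integrate by parts elementwise. Using $\Div\tilde w_h=0$, this gives $-(z,\tilde w_h\cdot\nabla v_h)_T$ plus facet terms $(\tilde w_h\cdot n\, z,v_h)_{\partial T}$. The facet terms are then merged with the upwind facet terms. The volume term is bounded by $\|\tilde w_h\|_{L^4(T)}\|z\|_{L^4(T)}\|\nabla v_h\|_T$. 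The factor $\|z\|_{L^4(T)}$ is handled by the scaled Sobolev/Poincar\'e estimate $\|z\|_{L^4(T)}\lesssim h_T^{1-\frac d4}\|\nabla z\|_T$, obtained from reference-element $H^1\hookrightarrow L^4$ together with the Poincar\'e property of $\IX_{\rm ps}$. The $L^4$-stability of $\calP_{\Div}$ then gives $\|h^{1-\frac d4}\tilde w_h\|_{L^4(T)}\le C_{\Div}\|h^{1-\frac d4}w\|_{L^4(T)}\lesssim\nu|w|_{h,d}$.

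\emph{Step 2: the facet terms.} After the integration by parts, the combined facet contributions on interior facets take the form $\langle \tilde w_h\cdot n_F\,(u^w-\avg{\cdot})\dots\rangle$, i.e.\ products of $\tilde w_h\cdot n$ with a trace of $z$ and a jump or trace of $v_h$. Each is bounded by H\"older $(4,4,2)$:
\[
\|h^{\frac14}\tilde w_h\|_{L^4(F)}\,\|h^{\frac14}z\|_{L^4(F)}\,\|h^{-\frac12}\jmp{v_h}\|_F,
\]
or, when the trace of $v_h$ itself appears, with $\|v_h\|_F$ replaced via a polynomial trace and inverse bound. For $z$ we use \eqref{eq:IXps_h14_facet}. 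For the polynomial $\tilde w_h$ we use \eqref{eq:L4_trace_poly} followed by $L^4$-stability, so no jump term of $w$ is even needed, because $\tilde w_h\cdot n$ is single-valued.

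Terms with $\|v_h\|_{L^2(F)}$ rather than its jump come from the upwind value and boundary facets. They are rewritten as averages plus half-jumps; the average parts cancel against the integration-by-parts terms thanks to the normal continuity of $\tilde w_h$. On boundary facets, $\jmp{v_h}=v_h$, so the boundary contribution is again controlled by the jump seminorm in $\|v_h\|_{1,h}$.

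\emph{Step 3: collect.} Summing over elements and facets with finite overlap gives $|\blfconv{w}(z,v_h)|\le C_{t,\ast}\nu|w|_{h,d}\|z\|_{1,h,\ast}\|v_h\|_{1,h}$, with $C_{t,\ast}$ depending on $C_{\Div}$, $k$ and shape regularity.

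The main obstacle is Step 2: the integration by parts must be arranged so that every facet term is paired with a jump of $v_h$ or is controlled by $h$-weighted traces without losing powers of $h$. This bookkeeping relies on $\tilde w_h\in H(\Div)$, which makes $\tilde w_h\cdot n$ single-valued so that the average terms cancel. If this cancellation fails, I would fall back on bounding $\|v_h\|_{L^4(F)}$ by \eqref{eq:L4_trace_poly} and the discrete Sobolev embedding \eqref{se}, accepting the $h^{-\frac14}$ factor compensated by the $h^{\frac14}$ weights on $\tilde w_h$ and $z$.
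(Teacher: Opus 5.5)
The gap is in Step~2, at the cancellation you flag as the main obstacle, and no bookkeeping repairs it. You read the volume term with the gradient on the trial function and integrate by parts elementwise. Normal continuity of $\tilde w_h$ then gives $\sum_T\langle \tilde w_h\cdot n_T\, z, v_h\rangle_{\partial T}=\langle \tilde w_h\cdot n_F, \avg{z}\cdot\jmp{v_h}+\jmp{z}\cdot\avg{v_h}\rangle_{\Fhi}$ plus boundary terms. Since $\tilde w_h\cdot n_F\,z^w=\tilde w_h\cdot n_F\avg{z}+\frac12|\tilde w_h\cdot n_F|\jmp{z}$, the upwind term only produces terms paired with $\jmp{v_h}$, and those are harmless. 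However, whatever the sign of the volume term, the remainder $\langle \tilde w_h\cdot n_F\,\jmp{z},\avg{v_h}\rangle_{\Fhi}$ survives and nothing in the form cancels it. The traces of $v_h$ come from your integration by parts, not from the upwind value. Since $v_h$ has no Poincar\'e property, this term admits no gain. Your fallback via \eqref{eq:L4_trace_poly} and \eqref{se} only gives $\|\tilde w_h\|_{L^4(\Omega)}\|z\|_{1,h}\|v_h\|_{1,h}$, a bound of type \eqref{eq:ch_cont}, whereas $\nu|w|_{h,d}=\mathcal O(h)$ for bounded $w$. In fact \eqref{eq:ch_mixed_L4} is false for the form as you read it. Take $w=e_1$ (so $\tilde w_h=e_1$), $z|_T=((x-x_T)\cdot e_1)\,e_1$ (elementwise mean zero, $\|z\|_{1,h,\ast}=\mathcal O(1)$), and $v_h\in[\IP^k]^d\cap[H^1_0(\Omega)]^d$ an interpolated fixed bump. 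All facet terms vanish and $\blfconv{w}(z,v_h)=\pm\int_\Omega v_{h,1}=\mathcal O(1)$, while the right-hand side of \eqref{eq:ch_mixed_L4} is $\mathcal O(h)$.

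The way out is to use the consistent conservative form of \cite{CKS05}, whose volume term is $-(u_h,(\tilde w_h\cdot\nabla)v_h)_\Th$. The displayed \eqref{eq:ch_CKS_upwind} puts the gradient on the wrong argument and is not consistent. For the conservative form no integration by parts is needed: in $\blfconv{w}(z,v_h)$ the function $z$ appears only undifferentiated, paired with $\nabla v_h$ or with $\jmp{v_h}$ ($=v_h$ on $\Fhb$). Your estimates then close the argument directly. In the volume term use the scaled Sobolev--Poincar\'e bound for $\|z\|_{L^4(T)}$. On facets use H\"older $(4,4,2)$, with \eqref{eq:IXps_h14_facet} for both one-sided traces bounding $z^w$, and \eqref{eq:L4_trace_poly} plus $L^4$-stability for $\tilde w_h$. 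This is the configuration the Temam proof reaches through skew-symmetry, which the upwind form lacks. Equivalently, the nonconservative rewriting of the consistent form contains the term $-\langle\tilde w_h\cdot n_F\jmp{u_h},\avg{v_h}\rangle_{\Fhi}$. So the cancellation you need comes from consistency of the form, not from normal continuity of $\tilde w_h$ alone.
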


\bibliographystyle{abbrv}
\bibliography{bib}

\end{document}